\documentclass[11pt,oneside]{amsart}
\usepackage{amsmath}
\usepackage{amssymb}
\usepackage{eucal}
\usepackage{amsthm}
\usepackage{amscd}
\usepackage{hyperref}
\usepackage{soul}
\usepackage{url}
\usepackage{skull}
\usepackage{mathrsfs}

\usepackage{tikz}
\usepackage{tikz-cd}

\usepackage{amssymb}
\usepackage[]{amsmath, amsthm, amsfonts,graphicx, amscd}
\input amssym.def \input amssym
\usepackage{forest}
\usepackage{mdframed}
\usepackage{framed}

\newcommand{\acl}{\operatorname{acl}}
\newcommand{\dcl}{\operatorname{dcl}}

\newcommand{\bg}[3][\mathbb{C}]{\mathrm{Aut}_{#2}(#3/#1)}

\newtheorem{thm}{Theorem}[section]

\newtheorem {fact}[thm]{Fact}

\newtheorem{prop}[thm]{Proposition}
\newtheorem{cor}[thm]{Corollary}

\newtheorem {lem}[thm]{Lemma}

\newtheorem{ques}[thm]{Question}

\theoremstyle{remark}
\newtheorem{rem}[thm]{Remark}

\newtheorem{np*}{Non-Proof}

\theoremstyle{definition}
\newtheorem{defn}[thm]{Definition}

\newtheorem{exam}[thm]{Example}

\theoremstyle{plain}

\newtheorem{innerimportantthm}{Theorem}
\newenvironment{importantthm}[1]
  {\renewcommand\theinnerimportantthm{#1}\innerimportantthm}
  {\endinnerimportantthm}
  
\newtheorem{innerimportantcor}{Corollary}
\newenvironment{importantcor}[1]
  {\renewcommand\theinnerimportantcor{#1}\innerimportantcor}
  {\endinnerimportantcor}

\newcommand{\gen}[1]{\left\langle#1\right\rangle}

\newcommand{\pd}[2]{\frac{\partial #1}{\partial #2}}

\def\Ind{\setbox0=\hbox{$x$}\kern\wd0\hbox to 0pt{\hss$\mid$\hss} \lower.9\ht0\hbox to 0pt{\hss$\smile$\hss}\kern\wd0}
\def\Notind{\setbox0=\hbox{$x$}\kern\wd0\hbox to 0pt{\mathchardef \nn=12854\hss$\nn$\kern1.4\wd0\hss}\hbox to 0pt{\hss$\mid$\hss}\lower.9\ht0 \hbox to 0pt{\hss$\smile$\hss}\kern\wd0}
\def\ind{\mathop{\mathpalette\Ind{}}}
\def\nind{\mathop{\mathpalette\Notind{}}}
\numberwithin{equation}{section}

\def\id{\operatorname{id}}

\newcommand{\m}{\mathbb }
\newcommand{\mc}{\mathcal }

\newcommand{\tp}{\operatorname{tp}}
\newcommand{\stp}{\operatorname{stp}}

\begin{document}

\title{Special classes of functions}

\author{James Freitag}
\address{James Freitag\\
University of Illinois Chicago\\
Department of Mathematics, Statistics, and
Computer Science\\
851 S. Morgan Street, Chicago, IL, USA 60607-7045}
\email{jfreitag@uic.edu}

\author{L\'eo Jimenez}
\address{L\'eo Jimenez\\
The Ohio State University\\
Department of Mathematics\\
100 Math Tower\\
231 West 18th Avenue\\
Columbus, OH, USA 43210-1174}
\email{jimenez.301@osu.edu}

\author{Joel Nagloo}
\address{Joel Nagloo\\
University of Illinois Chicago\\
Department of Mathematics, Statistics, and
Computer Science\\
851 S. Morgan Street, Chicago, IL, USA 60607-7045}
\email{jnagloo@uic.edu}
\thanks{J. Nagloo is partially supported by NSF grant DMS-2348885. Some of the work in this paper was completed while J. Nagloo was supported by an AMS Centennial Fellowship. J. Freitag partially funded by National Science Foundation grant no. 2452197 and CAREER award 1945251.}

\keywords{Pfaffian functions, reducible functions, differential algebra, model theory}
\subjclass[2020]{03C60, 12H05, 03C98, 34M15}

\date{\today} 
\begin{abstract}
    Using model theory and differential algebra, we give necessary conditions for algebraic ordinary differential equations to have a complex Pfaffian solution on some complex domain. These tools also allow us to give many examples of algebraic ordinary differential equations that do not have real Pfaffian solution on any open interval. We also give a sufficient condition for a function to be $d$-irreducible, in the sense of Nishioka. These characterizations are used to give several answers to questions of Bianconi (2016) and strengthen a theorem of Nguyen (2009). 
\end{abstract}

\maketitle

\tableofcontents

\section{Introduction}
In this paper, we use model theory and differential algebra to study the classical problem of characterizing the type of functions that can appear as solutions to a given algebraic differential equation. We will mainly focus on determining whether the solutions satisfy two main defining properties, namely, whether the functions are complex Pfaffian and whether they are $d$-reducible for some $d\in\mathbb{N}$.
\subsection{Pfaffian}

Pfaffian functions are defined as functions solutions to certain triangular systems of polynomial ordinary differential equations. Let $I$ be an open real interval. A \emph{Pfaffian chain} of real analytic functions on $I$ is a sequence $f_1, \cdots, f_n$ such that there are polynomials $P_1 ,\cdots P_n$ such that the following system of equations is satisfied on $I$:

\[
\begin{cases}
    f_1' = P_1(x,f_1) \\
    f_2' = P_2(x,f_1, f_2) \\
    \vdots \\
    f_n' = P_n(x, f_1, \cdots , f_n)
\end{cases}
\]

A real analytic function $f: I \rightarrow \mathbb{R}$ is Pfaffian (or $\mathbb{R}$-Pfaffian) if it is part of a Pfaffian chain. There is a natural generalization of this definition for functions of multiple variables, see subsection \ref{subsec: pfaff-prel}. Moreover, replacing $I$ by a complex domain and real analytic, real polynomials by complex analytic and complex polynomials, we obtain the class of \emph{complex Pfaffian} (or $\mathbb{C}$-Pfaffian) functions, which are more suited to the methods of this article. Finally, if we no longer insist that the system if triangular, e.g. the equations are of the form $f_i' = P_i(x,y_1, \cdots , y_n)$, we obtain the class of \emph{Noetherian functions}. 

Pfaffian functions were introduced by Khovanskii \cite{khovanskii1980class} where he shows that systems of equations with Pfaffian functions have a bound (computable from the data of the Pfaffian chains) on the number of isolated zeros. These developments played a large role in model theoretic results about extensions of the real field (cf. \cite{MR1435773} and \cite{wilkie1996model}). The effective bounds for sets defined by zeros of Pfaffian systems have also proved useful in a number of diophantine type results, especially in conjunction with the Pila-Wilkie theorem (cf. \cite{Binyamini-al-2023, JonesThomas2021}). So, there is powerful motivation to understand which analytic functions are Pfaffian, and hence to understand which differential equations have Pfaffian solutions. 

Despite this, until \cite{freitag2021not}, the literature essentially included only two ways of showing a function is not Pfaffian - the two fairly obvious ones\footnote{Many experts might also have noticed one other roundabout way of seeing that some differentially algebraic function is not Pfaffian based on the fact that there is no maximal o-minimal expansion of the reals \cite{rolin2003quasianalytic} - see \cite{freitag2021not} for a discussion.}: 
\begin{enumerate}
    \item Show that the function oscillates too much to satisfy the kind of bounds that zeros of Pfaffian systems satisfy (usually too much to even be o-minimally definable). The sine function on $(-\infty, \infty)$ is such an example. 
    \item Show that the function satisfies no differential equation. The $\Gamma$ function is such an example. 
\end{enumerate}

While the first technique is quite sensitive to the domain of the function (for example the sine function can be put in a Pfaffian chain on a domain where the tangent function is defined), the second obstruction is more robust but applies to few classical functions of interest. In this paper we will be interested in obstructions which are robust to shrinking the domain of the function. Indeed, when discussing Pfaffian functions, we will  always be talking about functions which are Pfaffian on \emph{some} domain. We will give differential algebraic characterizations of this class of functions. 

In \cite{freitag2021not}, a new obstruction distinct from those mentioned above was isolated. Functions satisfying a (nonlinear) differential equation of order strictly larger than one, which is moreover strongly minimal, cannot be Pfaffian. In fact, it should be clear to experts that for a function to be Pfaffian, the Lascar rank (or Morley rank) of the system of differential equations that the function satisfies must be equal to the order of the system. Then if one wants to give a characterization of which functions are Pfaffian, two obstacles\footnote{It is not completely clear that the problem reduces to these two cases, but this is what we show later in the paper.} remain:
\begin{enumerate}
    \item[(a)] Characterize which differential equations that are internal to the constants have Pfaffian solutions. 
    \item[(b)] Characterize which order one equations have Pfaffian solutions. 
\end{enumerate}

Roughly speaking, an algebraic differential equation is internal to the constants, if it admits enough independent first integrals after base change. We completely solve problem (a) in terms of the differential Galois (or binding) group of the equation, which is an algebraic group associated to any equation that is internal to the constants. Our result is Theorem \ref{theo: pfaff-binding-crit} below:

\begin{importantthm}{A}\label{THMA}
    Consider an equation 
    \[P(y^{(n)},y^{(n-1)}, \cdots , y',y) = 0 \tag{$E$}\]
    for some irreducible $P \in K[x_n, \cdots , x_0]$, with $K<\mathbb{C}(t)$ a subfield. Let $p \in S_1(K)$ be its unique generic type in $\mathrm{DCF}_0$. If $p$ is $\mathbb{C}$-internal, then $(E)$ has a meromorphic, generic (i.e. not satisfying any lower order equation) $\mathbb{C}$-Pfaffian solution on some domain $U \subset \mathbb{C}$ if and only if the binding group of $p$ has a subnormal series in which each quotient is finite or definably isomorphic to $\m G_a (\mathbb{C}), \, \m G_m (\mathbb{C})$ or $\mathrm{PSL}_2(\mathbb{C})$.
\end{importantthm}

This allows us to prove easily that some functions are not $\mathbb{C}$-Pfaffian:

\begin{importantcor}{}
    The following functions are not $\mathbb{C}$-Pfaffian, on any complex domain on which they are defined:
    \begin{itemize}
        \item the Weierstrass elliptic function $\wp$,
        \item the generic solution of a linear differential equation of order greater or equal to $3$, as long as its differential Galois group does not have a subnormal series of the previous form.
    \end{itemize}
\end{importantcor}

This characterization of $\m C$-Pfaffian functions allows us to answer a question of Bianconi \cite{Bianconi2016Some}. In \cite[Problem 1]{Bianconi2016Some}, he asks whether a noetherian chain can always be locally extended to a Pfaffian chain. Non-Pfaffian solutions to linear differential equations give many counterexamples to this (counterexamples such as the $j$ function were already known by work of Freitag \cite{freitag2021not}). 

For problem (b), we provide a necessary and sufficient condition for an order one equation to have a \emph{rationally} Pfaffian solution, in terms of the essential 1-forms of \cite{hrushovski2003model} and the new KS-forms of \cite{dupuy2023order}. We define rationally Pfaffian functions as those obtained by replacing the polynomials $P_1,\ldots, P_n$ with rational functions. We also provide a necessary condition for order one autonomous equations to have rationally pfaffian solutions. 

Finally, we provide what seem to be the first examples of equations of the form $y' = f(y)$, for $f \in \mathbb{C}(x)\setminus \mathbb{C}[x]$, with no Pfaffian solutions. This is given by Corollary \ref{cor: rat-pfaff-not-pfaff-ex} below:

\begin{importantthm}{B}
    Let $a,b$ be non-equal complex numbers with $a,b \neq 0,1$ and $\frac{a(a-1)}{b(b-1)} \notin \mathbb{Q}$. Then the equation
    \[y' = \frac{(y-a)(y-b)}{y(y-1)}\]
    has no non-constant meromorphic pfaffian solutions.
\end{importantthm}

One is also interested in understanding which complex analytic functions have real and imaginary parts which are Pfaffian, as well as which real analytic functions are Pfaffian, in the more classical real sense. Our methods seem to have no direct consequence for the first question, but they do allow us to obtain obstructions to many real analytic functions being Pfaffian. This is discussed in subsection \ref{subsec: real-conseq}. 

\subsection{Solvability and Reducibility}
The class of Pfaffian functions is one of several generalizations of the class of Liouvillian functions introduced by Joseph Liouville in the late 1800's. In the direction of higher order linear equations, the class of $d$-solvable functions has been studied classically starting with Fuchs (also in the late 1800's), and more recently by Singer \cite{singer1988algebraic}, Nguyen \cite{nguyen2009d}, and others. Roughly, a function is $d$-solvable if it can be obtained by successively solving homogeneous linear differential equations of order less or equal to $d$. 

More generally, one might allow extensions by arbitrary nonlinear equations of order at most $d$. Closing the class of functions under this operation yields Nishioka's notion of $d$-reducibility \cite{nishioka1990painleve, nishioka1992painleve}. Recently, work of \cite{moosa2014some} was combined with $d$-irreducibility to give proofs of the strong minimality of various differential equations \cite{casale2022strong}. Those familiar with the notion of $d$-solvability might view $d$-reducibility as the general non-linear version of $d$-solvability. 

We use the recent work of Freitag and Moosa \cite{freitag2021bounding} on generic $n$-transitivity to give a condition for the generic solution of a differential equation which is internal to the constants to have a solution which is not $d$-reducible. This allows us to strengthen results of Singer \cite{singer1985solving, singer1988algebraic,nguyen2009d} by showing that certain solutions of linear differential equations are not only not $d$-solvable, but are also not $(d-1)$-reducible, a more general class. This is Theorem \ref{SingerNguyen} below:

\begin{importantthm}{C}\label{THMC}
Let $X$ be the solution set of a homogeneous linear differential equation over $K$ with differential galois group $GL_n (\mathbb{C})$. Then the generic solution of $X$ is $n-1$-reducible but not $n-2$-reducible. 
\end{importantthm}

\subsection{Relationships between the classes}

The following diagram gives the known implications between the special classes of functions we consider.

\tikzset{
  prop/.style={double, rectangle, rounded corners, draw, inner sep=4pt, font=\small},
  imply/.style={double, -{Latex}, semithick},
}

\begin{tikzpicture}[>=Latex, node distance=10mm]

\node[prop] (C) at (-2.5,-1.5) {$1$-solvable};
\node[prop] (L) at (-6,-1.5) {Liouvillian};

\node[prop] (D) at (0,-1.5) {$2$-solvable};
\node[prop] (E) at (2.5,-1.5) {$3$-solvable};
\node[prop] (E1) at (5,-1.5) {$4$-solvable};
\node[] (B4) at (7,-1.5) {$\ldots$};

\node[prop] (F) at (-2.5,-3) {Pfaffian};
\node[prop] (G) at (-6,-3) {Noetherian};

\node[prop] (X0) at (-6,-4.5) {Rationally Pfaffian};

\node[prop] (X) at (-2.5,-4.5) {$1$-reducible};
\node[prop] (Y) at (0,-4.5) {$2$-reducible};
\node[prop] (Z) at (2.5,-4.5) {$3$-reducible};
\node[prop] (Z1) at (5,-4.5) {$4$-reducible};
\node[] (B4) at (7,-4.5) {$\ldots$};

\draw[imply] (X0) -- (X);
\draw[imply] (X0) -- (G);
\draw[imply] (F) -- (X0);

\draw[imply] (L) -- (F);

\draw[imply] (C) -- (L);
\draw[imply] (E) -- (E1);

\draw[imply] (C) -- (D);
\draw[imply] (D) -- (E);
\draw[imply] (C) -- (F);
\draw[imply] (F) -- (G);
\draw[imply] (F) -- (X);
\draw[imply] (X) -- (Y);
\draw[imply] (Y) -- (Z);
\draw[imply] (D) -- (X);
\draw[imply] (E) -- (Y);
\draw[imply] (E1) -- (Z);
\draw[imply] (Z) -- (Z1);
\end{tikzpicture}

Each of the horizontal arrows in the above diagram follows more or less readily from the definitions of the classes. To see that rationally Pfaffian implies Noetherian, note that if $y'=\frac{P(y)}{Q(y)}$ for some $P,Q \in K[x]$ and differential field $(K,\delta)$, we can construct the Noetherian system:
\[
\begin{cases}
y' = \frac{1}{Q(y)} \times P(y) \\ 
\left(\frac{1}{Q(y)}\right)' = -\frac{1}{Q(y)^3}\times P(y)\pd{Q}{x}(y) -\frac{1}{Q(y)^2} Q^{\delta}(y)
\end{cases}
\]
\noindent where $Q^{\delta}$ is the polynomial obtained by applying $\delta$ to the coefficients of $Q$.

Each of the single direction arrows does not reverse. For instance, there is a function which is $1$-reducible, but not Pfaffian. Each of the proper containments horizontally in the first row follow from \cite{nguyen2009d}. 

Depending on the precise interpretation, the containment of the class of Pfaffian functions in the Noetherian functions was only recently\footnote{There are many such examples if one insists that the domain of the relevant functions is specified, but more recent examples show the classes are distinct even when one allows for shrinking the domain.} shown to be proper \cite{freitag2021not}. In this paper (Section \ref{orderone}) we also show that the collection of Pfaffian functions is distinct from the collection of $1$-reducible functions. 

Showing that these classes are distinct has been a problem of interest. For instance, Bianconi asked if a Noetherian chain could be locally extended to a Pfaffian chain in \cite{Bianconi2016Some}. This was answered negatively in \cite{freitag2021not}. Examples in this paper also provide a negative answer. We also give many examples of functions which are $1$-reducible, but not Pfaffian. The diagonal arrows from $d$-solvability to $d-1$-reducibility are explained in Section \ref{dredsolve}. There are multiple ways to prove that these arrows do not reverse, we give one in Subsection \ref{subsec: red-prel}.

\subsection{Organization and techniques of the paper}
Our techniques for both problems centrally involve the analysis of types coming from geometric stability theory, a central area of model theory. In Section \ref{modtheoryprelim} we outline the basic definitions and results from model theory which we will require. We also introduce the formalism of a $D$-variety which will be needed to describe the work Hrushovski-Itai \cite{hrushovski2003model} on essential 1-forms and Freitag-Dupuy \cite{dupuy2023order} on KS-forms. In Section \ref{CPfaff} we give a characterization of $\m C$-internal types which have a $\m C$-Pfaffian solution and prove Theorem \ref{THMA} as well as other preliminary results. In Section \ref{dredsolve}, we prove our necessary criteria for reducibility of types and in particular this is where our proof of Theorem \ref{THMC} can be found.

\section{Preliminaries} \label{modtheoryprelim}

\subsection{Model theory}

In this article, we assume some familiarity with both geometric stability theory, for which a reference is \cite{GST}, and the model theory of differential fields of characteristic zero, as exposed in \cite{Marker96modeltheory} for example. For this preliminary section, we will work in a continuum sized saturated model $\mathcal{U} \models \mathrm{DCF}_0$, and we fix some countable differential subfield $K$. We assume that the field of constants $\{ x \in \mathcal{U} : x' = 0\}$ is the field $\mathbb{C}$ of complex numbers. Note however that the material in this section would make sense in any $\omega$-stable theory. 

One of the main tools for the model-theoretic study of differential equations is the binding, or Galois, group, which is a certain definable group associated to an \emph{internal} types. First we need to define \emph{invariant} families of types:

\begin{defn}
    A family of types $\mathcal{Q}$ is {\em $K$-invariant} if for any $b$ realizing some type in $\mathcal{Q}$ and any $\sigma \in \mathrm{Aut}_K(\mathcal{U})$, we have that $\sigma(b)$ realizes some type in $\mathcal{Q}$.
\end{defn}

Note that this is the case, for example, when all types in $\mathcal{Q}$ are over $K$, which will be exclusively the case in this article. 

\begin{defn}
    Let $\mathcal{Q}$ be a $K$-invariant family of types. A type $p \in S(K)$ is {\em $\mathcal{Q}$-internal}, or {\em internal to $\mathcal{Q}$}, if it is stationary and there are $a \models p$, some field extension $K<F$ and some $c_1, \cdots , c_n$ realizing types in $\mathcal{Q}$ that are defined over $K$, such that:
    \begin{itemize}
        \item $a \in \dcl(c_1, \cdots ,c_n, F)$,
        \item $a \ind_K F$.
    \end{itemize}
    If $a \in \acl(c_1, \cdots, c_n, F)$ instead, we say $p$ is {\em almost $\mathcal{Q}$-internal}.
\end{defn}

For many applications, the family of types $\mathcal{Q}$ is taken to be the definable set of constants $\{ x\in\mathcal{U}: x' = 0\}$. But in this work, we will use the more general definition. 

Fix some $p \in S(K)$ and a family of types $\mathcal{Q}$, all defined over $K$. We can consider the following group of permutations of $p(\mathcal{U})$, the set of realizations of $p$ in $\mathcal{U}$:
\[\bg[\mathcal{Q}]{K}{p} : = \left\{ \sigma\vert_{p(\mathcal{U})} : \sigma \in \bg[\mathcal{Q}]{K}{\mathcal{U}} \right\} \]

\noindent where $\bg[\mathcal{Q}]{K}{\mathcal{U}}$ is the set of automorphisms of $\mathcal{U}$ fixing $K \cup \mathcal{Q}$ pointwise. We now state the existence theorem for the binding group, which is proved in this form in \cite[Theorem 7.4.8]{GST}:

\begin{fact}
    Let $p \in S(K)$ be a type internal to a family $\mathcal{Q}$ defined over $K$. Then $\bg[\mathcal{Q}]{K}{p}$ is isomorphic to a $K$-definable group, and its natural action on $p(\mathcal{U})$ is relatively definable.
\end{fact}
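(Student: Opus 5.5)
We follow the standard construction of the binding group in a stable theory, as in \cite[Theorem 7.4.8]{GST}. The plan has three stages. First, realize the internality data by a fundamental system. Second, code automorphisms by the types of their images. Third, use stability (definability of types) to make everything definable.

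\emph{Step 1: a fundamental system.} By internality, fix $a \models p$, a set $F$ with $a \ind_K F$, and $c_1,\dots,c_n$ realizing types in $\mathcal{Q}$ with $a \in \dcl(c,F)$. Since $p$ is stationary and $a\ind_K F$, we can replace $F$ by a finite tuple $e$. A standard compactness argument then produces a $K$-definable function $f$ and a formula such that for every $b\models p$ there are $c_b \in \mathcal{Q}$ with $b = f(c_b,e)$. One first checks this for a Morley sequence over $K$ in $\operatorname{tp}(e/K)$ and then uses saturation.

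Next, take a finite tuple $\bar a=(a_1,\dots,a_m)$ of realizations of $p$, independent over $K$ and chosen long enough that $e \in \dcl(\bar a, \mathcal{Q}, K)$. Such a tuple exists because, by stationarity, for $\bar a$ a long enough Morley sequence we can find a conjugate of $e$ over $\bar a$ with $e\ind_K \bar a$. Combining this with the previous paragraph gives
\[
p(\mathcal{U}) \subseteq \dcl(\bar a, \mathcal{Q}, K).
\]
Thus every element of $\bg[\mathcal{Q}]{K}{p}$ is determined by its value on $\bar a$.

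\emph{Step 2: coding automorphisms by types.} We identify $\sigma \in \bg[\mathcal{Q}]{K}{p}$ with $\sigma(\bar a)$. Because $\sigma$ fixes $K\cup\mathcal{Q}$, the image $\sigma(\bar a)$ realizes the same type as $\bar a$ over $K\cup\mathcal{Q}$. Conversely, any $\bar a'$ with
\[
\operatorname{tp}(\bar a'/K\mathcal{Q})=\operatorname{tp}(\bar a/K\mathcal{Q})
\]
comes from such an automorphism, since $\mathcal{U}$ is saturated and homogeneous. The key point is that this type over the large set $K\cup\mathcal{Q}$ is isolated by a single formula $\psi(\bar x)$ over $K\cup\mathcal{Q}$. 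To prove this, use stability: $\operatorname{tp}(\bar a/K\mathcal{Q})$ is definable, and $\bar a$ is a tuple of independent realizations of a stationary type. From this one obtains a finite $\bar d\subseteq\mathcal{Q}$ such that $\operatorname{tp}(\bar a/K\bar d)\vdash \operatorname{tp}(\bar a/K\mathcal{Q})$. Stable embeddedness of $\mathcal{Q}$ is the essential input here; in the application $\mathcal{Q}$ is the constants $\mathbb{C}$, which are stably embedded in $\mathrm{DCF}_0$.

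\emph{Step 3: definable group structure.} Using the isolating formula, the set $\psi(\mathcal{U})$ is in definable bijection with the group. To land in a group defined over $K$ rather than over $K\bar d$, we code $\sigma$ instead by germs: the definable maps $p(\mathcal{U})\to p(\mathcal{U})$ of the form
\[
f(\cdot)\mapsto f'(\cdot),
\]
with representatives given by pairs $(\bar a,\bar a')$ modulo a $K$-definable equivalence relation. Composition then becomes definable, and we take imaginaries via elimination of imaginaries in $\mathrm{DCF}_0$. The action $(\sigma,b)\mapsto\sigma(b)$ is relatively definable: writing $b = g(\bar a,\bar q)$ for $\bar q\in\mathcal{Q}$, we have $\sigma(b)=g(\sigma(\bar a),\bar q)$, and this value is independent of the choice of representation.

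The main obstacle is Step 2, namely that $\operatorname{tp}(\bar a/K\mathcal{Q})$ is isolated by a formula with finitely many parameters from $\mathcal{Q}$. This is exactly where stability (definability of types together with stable embeddedness) is used. If it cannot be obtained cleanly, the fallback is to cite \cite[Theorem 7.4.8]{GST} directly.
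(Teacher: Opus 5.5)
The paper gives no argument for this Fact beyond citing \cite[Theorem 7.4.8]{GST}, so your fallback is exactly what the paper does. Your architecture (fundamental system, code $\sigma$ by $\sigma(\bar a)$, then pairs modulo a $K$-definable equivalence relation) is also the standard one. As a self-contained proof, however, the sketch breaks at two points.

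\textbf{Step 1.} The claim that a long Morley sequence $\bar a$ in $p$ gives $e\in\dcl(\bar a,\mathcal{Q},K)$ is false, and the remark about a conjugate of $e$ independent from $\bar a$ does not establish it. Take $K=\mathbb{Q}$, $\mathcal{Q}=\mathbb{C}$, $p$ the generic type of $y'=y$, and $e$ a nonzero solution of $y'=y/2$. Every realization of $p$ is $ce^2$ with $c\in\mathbb{C}$, so $\mathbb{Q}\langle e\rangle$ witnesses internality. Yet $\dcl(\mathbb{Q},p(\mathcal{U}),\mathbb{C})=\mathbb{C}(e^2)$ does not contain $e$. Separately, for a single $e$ the representation $b=f(c_b,e)$ is only guaranteed when $b\ind_K e$; you need finitely many independent conjugates of $e$ and compactness. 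What you actually need is just the existence of a fundamental system. You can cite it (\cite[Lemma 7.4.2]{GST}, which the paper states separately), or prove it by working with a canonical base lying in $\dcl^{\mathrm{eq}}(K\cup p(\mathcal{U})\cup\mathcal{Q})$ rather than with $e$ itself.

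\textbf{Step 2.} The more serious gap is that $\omega$-stability is never used, although the conclusion depends on it. In a stable, even superstable, theory a binding group is in general only type-definable. For example, a torsor for $Q\equiv(\mathbb{Z},+)$ with predicates for a compatible system of cosets of all $nQ$ has binding group $\bigcap_n nQ$, and there $\tp(\bar a/K\mathcal{Q})$ is isolated by no single formula. So Step 2 cannot follow from ``definability of types plus stable embeddedness.'' That gives finitely many parameters from $\mathcal{Q}$ for each $\phi$-definition separately, with no uniform $\bar d$. Even a full definition scheme over $K\bar d$ would not give $\tp(\bar a/K\bar d)\vdash\tp(\bar a/K\mathcal{Q})$.

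A correct route runs as follows.
\begin{enumerate}
\item By superstability, $\tp(\bar a/K\mathcal{Q})$ does not fork over $K\bar d$ for some finite $\bar d\subseteq\mathcal{Q}$.
\item Any realization of $\tp(\bar a/K\bar d)$ is $\sigma(\bar a)$ with $\sigma$ fixing $K\bar d$ pointwise. Such $\sigma$ preserves $\mathcal{Q}$ setwise, so the type of $\sigma(\bar a)$ over $K\mathcal{Q}$ is again a nonforking extension of $\tp(\bar a/K\bar d)$.
\item Finite Morley degree leaves only finitely many such extensions. Finitely many further parameters $\bar d'\subseteq\mathcal{Q}$ separate them, whence $\tp(\bar a/K\bar d\bar d')\vdash\tp(\bar a/K\mathcal{Q})$.
\item Your Step 3 then yields a $K$-type-definable group with relatively definable action. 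The descending chain condition on definable subgroups in an $\omega$-stable theory makes it definable.
\end{enumerate}
Your single isolating formula is thus a consequence of the theorem, not an input to it. Stable embeddedness is genuinely needed elsewhere: in the ``conversely'' of Step 2. Homogeneity of $\mathcal{U}$ only extends elementary maps over small sets, and $K\cup\mathcal{Q}$ is not small.
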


In the rest of this article, we will  also, by abuse of notation, denote this definable group by $\bg[\mathcal{Q}]{K}{p}$, and we call it the binding group, or Galois group, of $p$ over $\mathcal{Q}$. 

Let $p \in S(K)$ be $\mathcal{Q}$-internal. A \emph{fundamental system of solutions} is a tuple $(a_1, \cdots , a_n) \models p^{(n)}$ such that for any $a \models p$, we have $a \in \dcl(c_1, \cdots, c_m, a_1, \cdots , a_n, K)$ for some $c_1, \cdots , c_m \in \mathcal{C}$. In other words, the field $F=K\langle a_1, \cdots , a_n \rangle$ witnesses internality of $p$. The proof of the above fact uses that fundamental systems always exist (c.f \cite[Lemma 7.4.2]{GST} for a proof):

\begin{fact}
    If $p \in S(K)$ is internal to some family $\mathcal{Q}$ of partial types over $K$, then it has a fundamental system of solutions.
\end{fact}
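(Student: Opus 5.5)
The plan is to prove the existence of a fundamental system of solutions straight from the definition of internality, using a standard genericity argument in the stable theory $\mathrm{DCF}_0$. Since $p$ is $\mathcal{Q}$-internal, it is stationary and there are $a \models p$, a set (differential field) $F \supseteq K$ with $a \ind_K F$, and $c_1, \dots, c_m$ realizing types in $\mathcal{Q}$ such that $a \in \dcl(c_1,\dots,c_m,F)$. By compactness and the definability of $\dcl$, we may shrink $F$ to a finite tuple $e$: there is a $K$-definable function $f$ with $a = f(\bar c, e)$. Because $\mathcal{Q}$ is $K$-invariant and $\tp(a/K)=p$ is fixed, this property transfers to every realization. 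Indeed, the same witness shape works for any $a' \models p$ with $a' \ind_K e$, since by stationarity $\tp(a'/Ke)=\tp(a/Ke)$, and an automorphism over $Ke$ carries $a$ to $a'$ and $\bar c$ to some $\bar c'$ realizing types in $\mathcal{Q}$.

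The main step is to replace the auxiliary parameter $e$ by realizations of $p$. I would take a Morley sequence $a_1, a_2, \dots$ in $p$ that is also independent from $e$ over $K$; this is possible by extending $p$ nonforkingly, using stationarity. Let $n$ be large, for instance larger than the Lascar rank of $\tp(e/K)$, which is finite or at least ordinal-valued in $\omega$-stable $\mathrm{DCF}_0$. I then choose $e'$ with $\tp(e'/K)=\tp(e/K)$ and $e' \ind_K a_1\dots a_n$ failing as little as possible.

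The cleaner route is the usual one from \cite[Lemma 7.4.2]{GST}. For each $a \models p$, independence of $a$ from $e$ gives $a \in \dcl(\mathcal{Q}, e, K)$. Hence the whole of $p(\mathcal{U})$, and so each $a_i$, lies in $\dcl(\mathcal{Q}, K, e_j)$ for suitable conjugates $e_j$ of $e$. Conversely, I want $e$ itself to be in $\dcl$ of finitely many realizations of $p$ together with $\mathcal{Q}$. To get this, consider the sequence $a_1, a_2, \dots$ independent over $K$. The ranks $\mathrm{RU}(e/K a_1\dots a_i)$ decrease, so they stabilize at some $n$. After that point, $e \ind_{K a_{\le n}} a_{n+1}$, and hence $a_{n+1}\ind_K e a_{\le n}$. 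Thus any $a \models p$ independent from $K a_{\le n}$ realizes the nonforking extension and can be moved by automorphisms fixing $Ka_{\le n}$ to positions independent from $e$. This gives $a \in \dcl(\mathcal{Q}, K, a_{\le n}, e')$ where $e' \equiv_{Ka_{\le n}} e$ is chosen independent from $a$.

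I expect the main obstacle to be eliminating this residual parameter $e'$ so that $a$ is defined over $a_1,\dots,a_n$ and $\mathcal{Q}$ alone. The plan is to use definability of types: the set of $e''$ with $a = f(\bar c'', e'')$ for some $\bar c''$ in $\mathcal{Q}$ defines, over $Ka_{\le n} a$, the conclusion that $a$ is in $\dcl(\mathcal{Q}, K a_{\le n}, \cdot)$. By stabilization, $a$ is then $\mathcal{Q}$-definable over $Ka_{\le n}$ up to the choice of $e'$, and I would show that the formula is in fact independent of $e'$ by taking the canonical base. Finally, for $a$ not independent from $a_{\le n}$, I would write $a$ using a further independent realization, enlarging the tuple by one more element if needed. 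Together these give that $(a_1,\dots,a_n) \models p^{(n)}$ is a fundamental system.
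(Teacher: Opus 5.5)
The paper gives no argument of its own here; it only cites \cite[Lemma 7.4.2]{GST}. Your opening reduction is fine. From one witness $a=f(\bar c,e)$ with $a\ind_K e$, stationarity and $K$-invariance of $\mathcal{Q}$ show that every realization of $p|_{Ke}$ lies in $\dcl(Ke\,\mathcal{Q})$, via the same $K$-definable $f$.

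The gap is the main step, eliminating the auxiliary parameter. It is only promised (``the formula is in fact independent of $e'$ by taking the canonical base''), and the set-up you build cannot deliver it. The only property of $n$ your argument uses is $e\ind_{Ka_{\le n}}a_{n+1}$, that is, independence over $Ka_{\le n}$ alone. But the witnesses $\bar c$ in $a=f(\bar c,e')$ are correlated with both $a$ and $e'$, and nothing controls $e'$ over $Ka_{\le n}\cup\mathcal{Q}(\mathcal{U})$. For a counterexample, let $\mathcal{Q}=\mathbb{C}$, $K=\mathbb{Q}^{\mathrm{alg}}$, $p$ the generic type of $y'=y$, and $e$ a nonzero solution. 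Take the Morley sequence independent from $e$, as in your second paragraph. Then $U(e/Ka_{\le i})=1$ for all $i$, so your property already holds at $n=0$, and your last step would have to prove $p(\mathcal{U})\subseteq\dcl(K\mathbb{C})=\mathbb{C}$, which is false.

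There are two further slips. First, your intermediate aim of putting $e$ itself into $\dcl$ of realizations of $p$ and $\mathcal{Q}$ is false in general, since $e$ may carry irrelevant data. Second, ``enlarging the tuple by one more element'' for dependent $a$ is not available, because a fundamental system is a single tuple serving every $a$. The correct move needs no enlargement. Suppose $p|_{K\bar b}(\mathcal{U})\subseteq\dcl(K\bar b\,\mathcal{Q})$ with $\bar b\models p^{(n)}$. For any $a\models p$, choose $\bar b'\models p^{(n)}$ with $\bar b'\ind_K a\bar b$. Then $a\in\dcl(K\bar b'\mathcal{Q})$, and each $b'_i\in\dcl(K\bar b\,\mathcal{Q})$.

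The missing idea is to control $e$ over the whole invariant set $B=K\cup p(\mathcal{U})\cup\mathcal{Q}(\mathcal{U})$; this is where stability (definability of types, i.e.\ stable embeddedness) enters. Since $\tp(e/B)$ is definable over $B$, apply this to the single formula $f(y,\bar z)=x$, with $y$ the variable for $e$. This gives a formula $d(\bar z,x)$, using parameters from $K$ and finitely many parameters $\bar a_0\bar c_0$ from $p(\mathcal{U})\cup\mathcal{Q}(\mathcal{U})$, such that $f(e,\bar c)=x$ holds iff $d(\bar c,x)$ holds, for all $\bar c$ from $\mathcal{Q}(\mathcal{U})$ and all $x\models p$. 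Now let $a\models p|_{Ke}$ with $a=f(e,\bar c)$. Then $a$ is the unique realization of $p(x)\cup\{d(\bar c,x)\}$, so by compactness $a\in\dcl(K\bar a_0\bar c_0\bar c)$. Hence $p|_{Ke}(\mathcal{U})\subseteq\dcl(K\bar a_0\,\mathcal{Q})$ for one fixed finite tuple $\bar a_0$ of realizations of $p$. Two things remain: covering all of $p(\mathcal{U})$ (for instance by first using finitely many independent conjugates of $e$ when $U(p)$ is finite), and passing to a realization of $p^{(n)}$ as the paper's definition requires. Both are comparatively routine, but the step above is what your proposal lacks.
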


We call an internal type $p$ \emph{fundamental} if some (any) $a \models p$ is a fundamental system of solutions. It is easy to see that this is equivalent to the binding group acting freely on $p(\mathcal{U})$. 

An another important property of a type $p \in S(K)$ is being (weakly) \emph{orthogonal} to a family $\mathcal{Q}$:

\begin{defn}
    Let $p \in S(K)$ and let $\mathcal{Q}$ be a family of types over $K$. We say that:
    \begin{itemize}
        \item $p$ is {\em weakly orthogonal} to $\mathcal{Q}$ is for any $a \models p$ and tuple $c_1, \cdots, c_n$ or realizations of $\mathcal{Q}$, we have $a \ind_K c_1, \cdots , c_n$.
        \item if $p$ is stationary, then $p$ is {\em orthogonal} to $\mathcal{Q}$ if the non-forking extension $p\vert_F$ is weakly orthogonal to $\mathcal{Q}$, for any field extension $K<F$.
    \end{itemize}
\end{defn}

These properties interact with internality as follows:
\begin{itemize}
    \item If $p$ is $\mathcal{Q}$-internal, then it is weakly $\mathcal{Q}$-orthogonal if and only if $\bg[\mathcal{Q}]{K}{p}$ acts transitively on $p$.
    \item An almost $\mathcal{Q}$-internal type is orthogonal to $\mathcal{Q}$ if and only if it is algebraic. 
\end{itemize}

We will need to know, in the proof of Theorem \ref{theo: pfaff-binding-crit}, how the binding group changes when taking a larger family:

\begin{fact}
    Let $\mathcal{Q} \subset \mathcal{P}$ be families of partial types over $K$ and $p \in S(K)$. Then $\bg[\mathcal{P}]{K}{p}$ is a $K$-definable normal subgroup of $\bg[\mathcal{Q}]{K}{p}$.
\end{fact}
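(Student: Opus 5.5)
The plan is to prove the three parts of the statement separately: that we get a subgroup, that it is normal, and that it is $K$-definable. Throughout I assume, as in the intended application, that $p$ is $\mathcal{Q}$-internal. Then $p$ is also $\mathcal{P}$-internal, since $\mathcal{Q} \subset \mathcal{P}$ and the witnesses $c_i$ realizing types in $\mathcal{Q}$ also realize types in $\mathcal{P}$. So both binding groups exist by the Fact above.

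\emph{Subgroup.} An automorphism fixing $K \cup \mathcal{P}(\mathcal{U})$ pointwise also fixes $K \cup \mathcal{Q}(\mathcal{U})$ pointwise. Hence $\bg[\mathcal{P}]{K}{\mathcal{U}} \leq \bg[\mathcal{Q}]{K}{\mathcal{U}}$. Restriction to $p(\mathcal{U})$ is a group homomorphism, so it carries this inclusion to an inclusion $\bg[\mathcal{P}]{K}{p} \leq \bg[\mathcal{Q}]{K}{p}$ of permutation groups of $p(\mathcal{U})$.

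\emph{Normality.} I will show that $\bg[\mathcal{P}]{K}{\mathcal{U}}$ is normal in all of $\mathrm{Aut}_K(\mathcal{U})$, which uses that $\mathcal{P}$ is $K$-invariant. Take $\tau \in \mathrm{Aut}_K(\mathcal{U})$, $\sigma \in \bg[\mathcal{P}]{K}{\mathcal{U}}$ and $b$ realizing a type in $\mathcal{P}$. Then $\tau^{-1}(b)$ again realizes a type in $\mathcal{P}$, so $\sigma$ fixes it, and therefore $\tau\sigma\tau^{-1}(b) = b$. Thus $\bg[\mathcal{P}]{K}{\mathcal{U}}$ is normal in the intermediate group $\bg[\mathcal{Q}]{K}{\mathcal{U}}$. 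Normality then passes to the images under the surjective restriction homomorphism to $p(\mathcal{U})$.

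\emph{Definability.} This is the main point, because the two groups are a priori identified with definable groups $G = \bg[\mathcal{Q}]{K}{p}$ and $H = \bg[\mathcal{P}]{K}{p}$ in different ways. I will use that both actions on $p(\mathcal{U})$ are relatively definable. Fix a fundamental system $\bar a = (a_1, \dots, a_n)$ of $p$ relative to $\mathcal{Q}$; every element of $\bg[\mathcal{Q}]{K}{p}$ is determined by its values on $\bar a$. Define
\[
G_0 := \{ g \in G : \exists h \in H \ \textstyle\bigwedge_i\, g\cdot a_i = h \cdot a_i \}.
\]
This set is definable, over $K\bar a$, because both actions are relatively definable. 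An element $g\in G$ lies in $G_0$ exactly when its action on $p(\mathcal{U})$ coincides with that of some element of $H$, since actions agreeing on the fundamental system $\bar a$ agree everywhere. So $G_0$ is the image of the inclusion from the first step, and it is a definable subgroup.

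It remains to remove the parameter $\bar a$. The set $G_0$ does not depend on the choice of fundamental system, and $\mathrm{Aut}_K(\mathcal{U})$ permutes fundamental systems and preserves $G$, $H$ and their actions. Hence $G_0$ is $\mathrm{Aut}_K(\mathcal{U})$-invariant. Since $\mathrm{DCF}_0$ eliminates imaginaries and $\mathcal{U}$ is saturated, an invariant definable set is $K$-definable.

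I expect the main care to be needed exactly here. One must check that the relative definability of the two actions supplied by the Fact is compatible, i.e.\ that both are given by formulas on the same sort $p(\mathcal{U})$, so that the formula defining $G_0$ makes sense. One must also check that the invariance argument genuinely applies, with $\bar a$ ranging over realizations of a single type over $K$.
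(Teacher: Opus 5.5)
Your proof is correct. The paper gives no proof of this Fact; it is quoted as standard background next to the existence theorem for binding groups. So there is no argument in the paper to compare against, and your derivation from that existence theorem is a sound way to supply one.

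You are right to make explicit the hypothesis that $p$ is $\mathcal{Q}$-internal. Without it $\bg[\mathcal{Q}]{K}{p}$ need not be a definable group, as the paper itself remarks later. It is also exactly the setting in which the paper applies the Fact: $\mathcal{Q}=\{\mathbb{C}\}$, $\mathcal{P}=\{\mathbb{C},\pi(p)\}$, and $q$ a $\mathbb{C}$-internal type.

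Your normality argument actually proves something stronger, namely that $\bg[\mathcal{P}]{K}{\mathcal{U}}$ is normal in all of $\mathrm{Aut}_K(\mathcal{U})$. That is more than enough.

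The two checks you flag at the end both go through:
\begin{itemize}
\item $G$ and $H$ act on the same set $p(\mathcal{U})$ through relatively $K$-definable maps. Hence the formula defining $G_0$ makes sense with parameters $K\bar a$.
\item For $\tau \in \mathrm{Aut}_K(\mathcal{U})$, the tuple $\tau(\bar a)$ is again a fundamental system. Write any $a \models p$ as $a=\tau(a_0)$ with $a_0 \in \dcl(\bar a,\bar c,K)$. Then $a \in \dcl(\tau\bar a,\tau\bar c,K)$, and $\tau\bar c$ still realizes types in $\mathcal{Q}$ by $K$-invariance. Since $G_0$ does not depend on the choice of fundamental system, this gives $\tau(G_0)=G_0$, and saturation then yields $K$-definability.
\end{itemize}
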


We will use this in conjunction with the Galois correspondence. Let $\mathcal{C}$ be a $K$-definable set, we have the following, see \cite[Theorem 2.3]{sanchez2017some} for a proof:

\begin{fact}\label{fact: galois}
  \sloppy  Let $p \in S(K)$ be $\mathcal{C}$-internal, weakly $\mathcal{C}$-orthogonal and fundamental, and fix some $b \models p$. Let $H$ be a normal $K$-definable subgroup of $\bg[\mathcal{C}]{K}{p}$. Then there is a tuple $e \in \dcl(bA)$ such that:
    \begin{itemize}
        \item $\dcl(eK) = \left\{ d \in \dcl(bK) : \sigma(d) = d \text{ for all } \sigma \in H \right\}$,
        \item $\tp(b/eK)$ and $\tp(e/K)$ are also weakly $\mathcal{C}$-orthogonal and fundamental, and $H$ (resp. $\bg[\mathcal{C}]{K}{p}/H$) are definably isomorphic to $\bg[\mathcal{C}]{Ab}{\tp(b/eK)}$ (resp. $\bg[\mathcal{C}]{K}{\tp(e/K)}$).
    \end{itemize}
\end{fact}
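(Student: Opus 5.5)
The natural candidate for $e$ is a code for the orbit $H\cdot b$. Throughout, write $G=\bg[\mathcal{C}]{K}{p}$. I read the ``$e \in \dcl(bA)$'' in the statement as $e \in \dcl(bK)$, and similarly $\bg[\mathcal{C}]{Ab}{\tp(b/eK)}$ as the binding group of $\tp(b/eK)$ over $eK$.

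\emph{Constructing $e$ and the fixed-field clause.} Since $H$ is $K$-definable and the action of $G$ on $p(\mathcal{U})$ is relatively $K$-definable, the orbit $H\cdot b$ is a $Kb$-definable set. By elimination of imaginaries in $\mathrm{DCF}_0$ it has a real code $e\in\dcl(bK)$. Two preliminary observations follow.
\begin{itemize}
\item \emph{$G$ acts on orbits through conjugation.} For $\sigma\in\bg[\mathcal{C}]{K}{\mathcal{U}}$ we have $\sigma(H\cdot b)=H\cdot\sigma(b)$, because $\sigma$ fixes the parameters defining $H$ and the action.
\item \emph{$H$ fixes $e$.} If $\sigma$ restricts to some $g\in H$, then $H\cdot\sigma(b)=Hg\cdot b=H\cdot b$, using normality of $H$. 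Hence $\sigma(e)=e$.
\end{itemize}
Consequently every element of $\dcl(eK)$ is fixed by $H$. For the converse, let $d=f(b)$ with $f$ a $K$-definable function, and suppose $d$ is fixed by all of $H$.
\begin{itemize}
\item Take any $\tau\in\mathrm{Aut}(\mathcal{U}/Ke)$.
\item Then $\tau(b)\in H\cdot b$, so $\tau(b)=h\cdot b=\sigma(b)$ for some $h\in H$ realized by some $\sigma\in\bg[\mathcal{C}]{K}{\mathcal{U}}$.
\item Hence $\tau(d)=f(\sigma(b))=\sigma(d)=d$.
\end{itemize}
Since $\mathcal{U}$ is saturated and homogeneous over the countable set $Ke$, this invariance gives $d\in\dcl(eK)$.

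\emph{The type $\tp(b/eK)$.} The second observation above shows that $H\cdot b$ is exactly the set of realizations of $\tp(b/eK)$. Internality and fundamentality are inherited from $p$, since the same $b$ witnesses them over the larger base. Every $\sigma\in\mathrm{Aut}(\mathcal{U}/eK\mathcal{C})$ moves $b$ inside $H\cdot b$. Because $G$ acts freely, $\sigma\mapsto$ the unique $h\in H$ with $\sigma(b)=h\cdot b$ is an injective homomorphism into $H$. Conversely, each $h\in H$ comes from some $\sigma\in\bg[\mathcal{C}]{K}{\mathcal{U}}$, and that $\sigma$ fixes $e$. So the binding group of $\tp(b/eK)$ is identified with $H$, and this identification is definable with parameters $b$. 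The group $H$ acts transitively on $H\cdot b$, which is the full set of realizations of $\tp(b/eK)$, and this gives weak $\mathcal{C}$-orthogonality.

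\emph{The type $\tp(e/K)$, and the expected obstacle.} Since $e\in\dcl(bK)$, $\tp(e/K)$ is $\mathcal{C}$-internal. Restriction gives a surjective homomorphism $G\to\bg[\mathcal{C}]{K}{\tp(e/K)}$, whose kernel consists of those $g$ with $Hg\cdot b=H\cdot b$, that is, $g\in H$. Hence $\bg[\mathcal{C}]{K}{\tp(e/K)}\cong G/H$, and $G/H$ is definable by elimination of imaginaries. Transitivity of $G$ on $p(\mathcal{U})$ passes to the codes of orbits, giving weak orthogonality of $\tp(e/K)$.

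The step I expect to be delicate is fundamentality of $\tp(e/K)$, together with checking that all these isomorphisms are definable rather than merely abstract. For fundamentality, let $e'\models\tp(e/K)$. Then $e'$ codes $H\cdot(g\cdot b)$ for some $g\in G$, so I must show $e'\in\dcl(e,K,\mathcal{C})$. The free transitive action identifies $G/H$ with $\tp(e/K)(\mathcal{U})$, and $G/H$ acts on this set through the restricted binding group. I would use the fact that the binding group is definably isomorphic, over a fundamental system, to a group definable in $\mathcal{C}$, so that $gH$ is coded by constants together with $e$. Making this precise over $K$ rather than over $Kb$ is where care is needed, and I would follow the argument of \cite[Theorem 2.3]{sanchez2017some}.
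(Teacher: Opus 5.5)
The paper does not prove this Fact; it quotes it from \cite[Theorem 2.3]{sanchez2017some}. Your construction, taking $e$ to be a code for the $Kb$-definable orbit $H\cdot b$, is the standard proof of this correspondence. Your reading of $\dcl(bA)$ and of the subscript $Ab$ as $\dcl(bK)$ and $eK$ is the intended one. The parts you carry out are correct: the fixed-field clause, $\tp(b/eK)(\mathcal{U})=H\cdot b$, the identification of its binding group with $H$, and weak orthogonality of both types.

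The step you defer, fundamentality of $\tp(e/K)$, does not require coding $gH$ by constants. For $\sigma\in\mathrm{Aut}(\mathcal{U}/K\mathcal{C})$ let $g_\sigma\in G$ be its restriction to $p(\mathcal{U})$. Every $e'\models\tp(e/K)$ codes $H\cdot(g_0\cdot b)$ for some $g_0\in G$, and $\sigma(e')$ codes $H\cdot(g_\sigma g_0\cdot b)$. By freeness these orbits coincide iff $Hg_\sigma g_0=Hg_0$, i.e.\ iff $g_\sigma\in H$. So every realization of $\tp(e/K)$ has stabilizer exactly $H$. This fills two holes at once:
\begin{enumerate}
\item It justifies your kernel claim. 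The kernel of $G\to\bg[\mathcal{C}]{K}{\tp(e/K)}$ is the intersection of all stabilizers, whereas you only computed the stabilizer of $e$.
\item It gives fundamentality. Any $\sigma$ fixing $Ke\mathcal{C}$ pointwise has $g_\sigma\in H$, hence fixes every $e'$. Stable embeddedness of $\mathcal{C}$ over $Ke$ (the last fact recalled in the model-theoretic preliminaries) then gives $e'\in\dcl(Ke\,c)$ for a finite tuple $c$ of constants.
\end{enumerate}

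Definability is likewise routine. $H$ with the restricted action is already a $K$-definable group acting faithfully and relatively definably on $\tp(b/eK)(\mathcal{U})$. The action of $G/H$ on $\tp(e/K)(\mathcal{U})$ sends the code of $H\cdot b'$ to the code of $H\cdot(g\cdot b')$; it is well defined by normality and is induced by the relatively $K$-definable action of $G$ on $p$.

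One small correction: with the paper's definition, internality includes stationarity, and $\tp(b/eK)$ need not be stationary. A finite nontrivial $H$ makes it algebraic with $|H|$ realizations. So only the non-stationary form of internality is inherited. This is harmless, since the Fact only claims weak orthogonality, fundamentality and the binding group for $\tp(b/eK)$.
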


Even if a type $p \in S(K)$ is not internal to some $K$-definable set $\mathcal{C}$, it always has a \emph{largest $\mathcal{C}$-internal quotient}, which is called its \emph{$\mathcal{C}$-reduction}. We recall its definition as well as the properties that we will need later. 

First, we recall a convention that will use throughout.

\begin{defn}
    Let $p,q \in S(K)$, a {\em $K$-definable map} $f : p \rightarrow q$ is a partial $K$-definable function $f$ such that its domain is {the set of realizations of} a formula in $p$. The type $q$ is then the type of $f(a)$, for some (any) $a \models p$. We sometimes write $q = f(p)$. 
\end{defn}

Let $a \models p$, by $\omega$-stability there is some $e \in \dcl(aK)$ such that $\dcl(aK) \cap \mathcal{C}^{\mathrm{int}} = \dcl(eK)$, where $\mathcal{C}^{\mathrm{int}}$ is the set of realizations of types over $K$ that are $\mathcal{C}$-internal. Any tuple interdefinable with $e$ over $K$ also satisfies this property. This induces a $K$-definable map $\mu : p \rightarrow \tp(e/K)$. The map and the type $\tp(e/K)$ are called the $\mathcal{C}$-reduction of $p$. They are unique up to interdefinability over $K$. 

Any $K$-definable map from $p$ to some $\mathcal{C}$-internal type $q \in S(K)$ factors through the $\mathcal{C}$-reduction of $p$. Moreover, the $\mathcal{C}$-reduction is functorial: a $K$-definable map $f : p \rightarrow q$ gives rise to a map from the $\mathcal{C}$-reduction of $p$ to the $\mathcal{C}$-reduction of $q$.

In our work, we will use classical notions of decompositions of functions (Pfaffian and reducible) to obtain analyses of types:

\begin{defn}
    Let $p \in S(K)$ be a type, and $\mathcal{Q}$ be a family of types. A $\mathcal{Q}$-analysis (or an analysis over $\mathcal{Q}$) of $p$ is a sequence of types over $K$ and $K$-definable maps
    \[p = p_n \xrightarrow{f_n} p_{n-1} \xrightarrow{f_{n-1}} \cdots p_1 \xrightarrow{f_1} p _0\]
    \noindent such that for any $i \geq 1$ and any $a_i \models p_i$, the type $\tp(a_i/f(a_i)K)$ is in $\mathcal{Q}$, and $p_0$ is in $\mathcal{Q}$. 
\end{defn}

Of particular importance is the family of \emph{semi-minimal} types:

\begin{defn} 
    A type $p$ is {\em semi-minimal} if is is almost internal to a minimal type, i.e. a stationary type of $U$-rank one. A {\em semi-minimal analysis} of some $p \in S(K)$ is an analysis of $p$ over the family of semi-minimal types.
\end{defn}

We have the following classical fact (cf. \cite[Lemma 2.5.1]{GST} and \cite[Lemma 1.8]{BUECHLER2008135}):

\begin{fact}
    Every finite $U$-rank type has a semi-minimal analysis. 
\end{fact}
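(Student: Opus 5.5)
We argue by induction on the $U$-rank of $p = \tp(a/K)$, which is finite by hypothesis, and we build the analysis from the top down.

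If $U(p)=0$, then $p$ is algebraic. An algebraic stationary type is almost internal to any minimal type, hence semi-minimal, so the one-step analysis $p=p_0$ works. Note that a minimal type exists in $\mathrm{DCF}_0$, for instance the generic type of the constants.

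For $U(p)\geq 1$, the first step is to find a non-algebraic semi-minimal "quotient" of $a$. We do this as follows.
\begin{itemize}
\item By the standard fact that finite $U$-rank types are non-orthogonal to some minimal type, there is a minimal type $r$ (over some parameters, which we may take to be $\acl(K)$ after passing to a conjugate and using that $K$-invariant families are closed under conjugates) such that $p\not\perp r$.
\item Working over a model or over $\acl(K)$, we take $b$ with $a \nind b$ and $b$ a tuple of realizations of $r$ and its conjugates. We then replace $b$ by (a code for) the canonical base of $\stp(b/aK)$.
\item This yields $e \in \acl(aK)\setminus \acl(K)$ such that $\tp(e/K)$ is almost internal to the $K$-invariant family of conjugates of $r$. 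This is the lemma of \cite[Lemma 2.5.1]{GST}: the canonical base of a type internal to a family lies in the almost-internal closure.
\end{itemize}
Refining to the set of $K$-conjugates of $r$ and passing to $\dcl$ (taking a code for the finite set of conjugates of $e$ over $aK$), we can arrange $e\in \dcl(aK)$, so that $f:p\to \tp(e/K)$ is a $K$-definable map with $q=\tp(e/K)$ semi-minimal and non-algebraic.

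The second step applies induction to the fibre. Since $U(a/eK) < U(a/K)$ by the Lascar equality $U(a/K)=U(a/eK)+U(e/K)$ for finite rank, the type $\tp(a/eK)$ has smaller rank. The induction hypothesis over the base $K\langle e\rangle$ then gives a semi-minimal analysis of $\tp(a/eK)$. We then concatenate it with $f$: each map in the fibre analysis is $eK$-definable, and composing with the tuple $e$ (i.e., using the maps $a\mapsto (g(a),e)$) turns it into $K$-definable maps whose fibres are the same semi-minimal types. This gives the desired chain, ending in $p_0=q$.

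The main obstacle is the first step, namely ensuring $e$ lies in $\dcl(aK)$ rather than merely $\acl(aK)$, and handling stationarity and parameters. The definition of analysis requires $K$-definable maps and types over $K$, so I would work over $\acl^{eq}(K)$, where all types are stationary, and use elimination of imaginaries in $\mathrm{DCF}_0$ to code finite sets. If parameters from $\acl(K)$ must be removed, I would replace $e$ by the code of its finite orbit under $\mathrm{Aut}(\mathcal{U}/aK)$. This preserves almost internality, since the code lies in $\dcl$ of finitely many conjugates, each almost internal. It also preserves non-algebraicity over $K$, since $e$ is algebraic over the code.
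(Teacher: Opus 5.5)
The paper does not prove this Fact; it only cites \cite{GST} and \cite{BUECHLER2008135}. Your architecture is the standard one behind those references: induction on $U$-rank, a non-algebraic semi-minimal $e\in\dcl(aK)$, induction on $\tp(a/eK)$ via Lascar additivity, and gluing with $x\mapsto (g_{f(x)}(x),f(x))$. The $\acl$-to-$\dcl$ issue you single out as the main obstacle is routine, and your orbit-coding handles it.

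The real gap is earlier, in producing a \emph{non-algebraic} $e$ at all. First, ``passing to a conjugate'' of $r$ over $K$ moves its parameters to conjugate parameters, not into $\acl(K)$. So the claim that $r$ may be taken over $\acl(K)$ is unjustified. Second, and this is what breaks the step, $p\not\perp r$ does not give a tuple $b$ of realizations of $r$ (or its conjugates) with $a\nind_K b$. The dependence is in general only witnessed over extra parameters $C$ with $a\ind_K C$, and $\mathrm{Cb}(\stp(b/aK))$ computed without $C$ can lie in $\acl(K)$.

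Here is a concrete instance in $\mathrm{DCF}_0$. Take $K=\mathbb{Q}^{\mathrm{alg}}$, $p$ the generic type of $y'=1$, and $r$ the generic type of the constants, both over $K$. Then $p\not\perp r$. Yet a solution $a$ is independent over $K$ from every tuple $\bar c$ of constants, since $\acl(K\bar c)$ consists of constants and $U(a/K)=1$. The dependence only appears over extra parameters such as a second solution $m\ind_K a$, with $b=a-m$ constant. But $b$ is then a generic constant over $K\langle a\rangle$, so $\mathrm{Cb}(\stp(b/aK))\in\acl(K)$. As written, your $e$ is algebraic and the induction step stalls.

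The fix is to put the witnessing parameters into the canonical base. Choose $C\supseteq K$ containing the parameters of $r$, with $a\ind_K C$, and $d$ realizing $r|C$ with $a\nind_C d$. Let $e=\mathrm{Cb}(\stp(Cd/\acl(aK)))$. Then:
\begin{itemize}
\item $e\in\acl(aK)$;
\item $e\notin\acl(K)$, because $a\ind_K C$ and $a\nind_C d$ force $Cd\nind_K a$;
\item $e\in\dcl(C_1d_1,\dots,C_kd_k)$ for a Morley sequence $(C_id_i)$ in $\stp(Cd/\acl(aK))$, and an easy forking computation gives $C_1\cdots C_k\ind_K a$, hence $C_1\cdots C_k\ind_K e$.
\end{itemize}
Each $d_i$ realizes a $K$-conjugate of $r$ over parameters in $C_i$. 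So this exhibits $\stp(e/K)$ as internal to the $K$-conjugates of $r$. In the example it recovers $e$ interdefinable with $a$, via $a=m+(a-m)$.

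If ``semi-minimal'' is to mean almost internal to $r$ itself, you also need these conjugates to be non-orthogonal to $r$. Start the Morley sequence with $C_0d_0=Cd$. By indiscernibility, the conjugates are either pairwise non-orthogonal or pairwise orthogonal. The latter would make $\stp(e/K)$ non-orthogonal to infinitely many pairwise orthogonal minimal types, contradicting finite weight. With this in place, the rest of your argument goes through: coding the orbit of $e$ over $aK$, then induction on the fibre.
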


In our study of order one equations, we will need to use the fine structure of minimal types. We recall some of the definitions below:

\begin{defn}
    A minimal type $p \in S(K)$ is \emph{geometrically trivial} if for any $a_1, \cdots , a_n, b \models p$, we have $b \in \acl(a_1,\cdots , a_n , K)$ if and only if $b \in \acl(a_i, K)$ for some $i$.

    If moreover we have $b \in \acl(a_1, \cdots  , a_n, K)$ if and only if $b = a_i$ for some $i$, then $b$ is said to be \emph{strictly disintegrated}.
\end{defn}

For us, a very useful feature of strictly disintegrated types is their behavior with respect to interalgebraicity. The following is well-known, but we include a proof:

\begin{fact}\label{fact: tot-dis-dcl}
    Let $r,s \in S(K)$ be minimal types, with $r$ strictly disintegrated. Assume that there are $a \models s$ and $b \models r$ such that $K\langle a \rangle^{\mathrm{alg}} = K \langle b \rangle^{\mathrm{alg}}$, that is $\acl(aK) = \acl(bK)$. Then $b \in K\langle a \rangle$, that is $b \in \dcl(a K)$.
\end{fact}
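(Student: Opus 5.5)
The plan is to look at the finitely many conjugates of $b$ over $K\langle a\rangle$ and use strict disintegration to show there is only one.

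First I will check that $b$ is algebraic over $aK$. This is immediate from $\acl(aK)=\acl(bK)$. So $\tp(b/aK)$ is algebraic, and its set of realizations is finite, say $\{b=b_1,\dots,b_m\}$. It suffices to show $m=1$. Indeed, in $\mathrm{DCF}_0$ (characteristic zero, and $\omega$-stable with elimination of imaginaries), an element whose type over $aK$ has a unique realization lies in $\dcl(aK)$. Moreover $\dcl(aK)$ is the differential field $K\langle a\rangle$.

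Next I will show that each $b_i$ realizes $r$ and that $\acl(b_iK)=\acl(aK)$. Pick $\sigma\in\mathrm{Aut}_{K\langle a\rangle}(\mathcal U)$ with $\sigma(b)=b_i$, which exists by saturation and homogeneity. Since $\sigma$ fixes $K$, $b_i\models r$. Since $\sigma$ fixes $a$, applying it to $\acl(aK)=\acl(bK)$ gives $\acl(aK)=\acl(b_iK)$.

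Now suppose $m\ge 2$. Then $b_2\in\acl(aK)=\acl(b_1K)$, with $b_1,b_2\models r$. Strict disintegration applied with $n=1$ says that if $b_2\in\acl(b_1,K)$ then $b_2=b_1$. This contradicts the distinctness of $b_1$ and $b_2$, so $m=1$, and therefore $b\in\dcl(aK)=K\langle a\rangle$.

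The only delicate points are bookkeeping: transporting the hypothesis $\acl(aK)=\acl(bK)$ along automorphisms fixing $a$, and identifying $\dcl(aK)$ with $K\langle a\rangle$ in $\mathrm{DCF}_0$. Notably, minimality of $s$ is not really used; it only guarantees that the interalgebraicity hypothesis makes sense.
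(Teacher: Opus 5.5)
Your proof is correct and follows essentially the same route as the paper. The paper takes a formula $\theta(x,y)$ over $K$ with $\theta(b,a)$ and a minimal finite number of realizations $b_1=b,\dots,b_n$ of $\theta(y,a)$; these all realize $r$ and are interalgebraic with $a$ and with each other, so strict disintegration forces $n=1$, whence $b\in\dcl(aK)=K\langle a\rangle$. Working directly with the realizations of $\tp(b/aK)$, as you do, makes explicit the paper's ``we may assume they all realize $r$'' step; as you note, neither minimality of $s$ nor elimination of imaginaries is actually needed.
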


\begin{proof}
    By assumption, there must be a formula $\theta(x,y)$ over $K$ such that we have $\theta(b,a)$, and $\theta(y,a)$ has only finitely many realizations $b_1:=b,b_2 \cdots, b_n$, which we may assume to all be realizations of $r$, with $n$ minimal. In particular, we have that $U(\tp(b_1, \cdots, b_n/K))=U(\tp(a/K)) = 1$ and $k\gen{a}^{alg}=k\gen{b_1}^{alg}=\ldots=k\gen{b_n}^{alg}$. Because $r$ is strictly disintegrated, this implies $n = 1$, and therefore $b \in \dcl(aK)$, or equivalently $b \in K \langle a \rangle$.
\end{proof}

We also recall the following consequence of stability, and in particular, of  the \emph{stable embeddedness} of a family of types $\mathcal{Q}$ (see the appendix of \cite{chatzidakis1999model}):

\begin{fact}
    Let $\mathcal{Q}$ be a family of partial types over $K$. If $a \in \mathcal{U}$ is fixed by every element of $\bg[\mathcal{Q}]{K}{\mathcal{U}}$, then there are realizations $c_1, \cdots, c_n$ of $\mathcal{Q}$ such that $a \in \dcl(c_1, \cdots , c_n , K)$
\end{fact}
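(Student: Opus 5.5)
We argue by Galois theory for the automorphism group $\bg[\mathcal{Q}]{K}{\mathcal{U}}$, reducing the statement to the appendix of \cite{chatzidakis1999model}. Fix $a$ fixed by every element of $\bg[\mathcal{Q}]{K}{\mathcal{U}}$. The first step is to pass from $\mathcal{Q}$ to a small set of parameters. By stable embeddedness of $\mathcal{Q}$ (which holds for any family of partial types in a stable theory), there is a subset $C$ of realizations of $\mathcal{Q}$ with the following property: for every formula $\phi(x,y)$ over $K$, the trace $\phi(a,\mathcal{U})\cap \mathcal{Q}(\mathcal{U})$ is definable over $K$ together with finitely many elements of $C$. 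Concretely, we take $C$ to be a countable set containing, for each formula, the parameters of a canonical definition of this trace. By definability of types in the stable theory $\mathrm{DCF}_0$, we may take these parameters to be codes (in $\mathcal{Q}^{eq}$, hence by elimination of imaginaries real tuples definable from realizations of $\mathcal{Q}$).

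Next we show that $\tp(a/KC)\vdash \tp(a/K\mathcal{Q}(\mathcal{U}))$. This is the standard consequence of stable embeddedness: the type of $a$ over $K$ together with all of $\mathcal{Q}$ is determined by its restriction to $KC$. Then any $a'\models \tp(a/KC)$ has the same type as $a$ over $K\mathcal{Q}(\mathcal{U})$. By saturation and strong homogeneity of $\mathcal{U}$, which is continuum-sized and which we may take saturated over the relevant small sets, there is an automorphism fixing $K\cup\mathcal{Q}(\mathcal{U})$ pointwise and sending $a$ to $a'$.

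Homogeneity over the large set $\mathcal{Q}(\mathcal{U})$ is the delicate point. We handle it by the usual trick: replace $\mathcal{Q}(\mathcal{U})$ by a small elementary substructure of the induced structure containing $C$, and use that $\mathcal{Q}$ is stably embedded. This is exactly what the appendix of \cite{chatzidakis1999model} provides, namely that $\bg[\mathcal{Q}]{K}{\mathcal{U}}$-orbits coincide with types over $K\mathcal{Q}(\mathcal{U})$. We cite that result rather than reprove it.

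Hence, since $a$ is fixed by all such automorphisms, $\tp(a/KC)$ has a unique realization, so $a\in\dcl(KC)$. By compactness, $a\in\dcl(K c_1,\dots,c_n)$ for finitely many $c_i\in C$. Finally, each $c_i$ is definable from finitely many realizations of $\mathcal{Q}$, which gives the required $c_1,\dots,c_n$. The main obstacle is the homogeneity step; everything else is bookkeeping.
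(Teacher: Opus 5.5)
Your proposal is correct and follows the same approach as the paper, which offers no argument beyond citing stable embeddedness of $\mathcal{Q}$ and the appendix of \cite{chatzidakis1999model}; you unpack that citation: a countable set $C$ of parameters from $\mathcal{Q}$ with $\tp(a/KC)\vdash\tp(a/K\mathcal{Q}(\mathcal{U}))$, then lifting to automorphisms, then compactness to get $a\in\dcl(KC)$. The one step you describe loosely, homogeneity over the large set $\mathcal{Q}(\mathcal{U})$, is really a back-and-forth of length $|\mathcal{U}|$ that uses at each stage that the type of a small set over $\mathcal{Q}(\mathcal{U})$ is implied by its type over a small subset of $\mathcal{Q}(\mathcal{U})$; citing it, as the paper does, is fine.
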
 

\subsection{Differential equations on algebraic curves}\label{subsec: D-var-prel}

In this subsection, we briefly recall the formalism of $D$-varieties, which will be useful throughout the text, as well as some specific results on differential equations on curves. The reader may consult \cite{moosa2022six} for more details on this perspective. We work in a fixed model $(\mathcal{U},\delta)$ of $\mathrm{DCF}_0$.

Consider a differential field $K$, and $V \subset \mathcal{U}^n$ some affine variety defined over $K$. The \emph{prolongation} of $V$ is the variety $\tau V \subset \mathcal{U}^{2n}$ defined by the following equations:
\[
\begin{cases}
    P(X_1, \cdots, X_n) = 0 \\
    P^{\delta}(X_1, \cdots , X_n) + \sum\limits_{i=1}^n \pd{P}{X_i} U_i = 0
\end{cases}
\]
\noindent for each $P \in I(V)$. The polynomial $P^{\delta}$ is obtained by applying the ambient derivation to the coefficients of $P$. If $K < \mathbb{C}$, we recover the classical tangent bundle $TV$. The prolongation is a torsor for the tangent bundle.

By patching, we can define the prolongation of any algebraic variety.

\begin{defn}
    A $D$-variety $(V,s)$ is a variety $V$, equipped with a rational section $s: V \rightarrow \tau V$ of its prolongation, meaning that $\pi \circ s = \mathrm{Id}_V$, where $\pi : \tau V \rightarrow V$ is the natural projection. Note that it is sometimes required that $s$ is regular in the literature, but we do not make that assumption.
\end{defn}

As proved in \cite{hrushovski2003model}, there is a one-to-one correspondence between order one differential equations and $D$-varieties $(C,s)$, where $C$ is an algebraic curve. If $C$ is defined over $\mathbb{C}$, there is also a one-to-one correspondence between rational sections $s$ of the tangent bundle and rational differential form in $\Omega(C)$, given by associating to $s$ the unique $\omega$ with $\omega(s) = 1$. If $C$ is defined over a non-constant differential field $K$, the correspondence is between sections of the prolongation and the space $\Omega ^\tau _{K(C)/K}$ of KS-forms on $C$ instead, see \cite{dupuy2023order}. 

We make $D$-varieties correspond to definable sets in $\mathrm{DCF}_0$ via the notion of $D$-points. It is straightforward to check that for any $a \in V(\mathcal{U})$, the derivative $\delta(a)$ belongs to $\tau V$. This leads to the following:

\begin{defn}
    Let $(V,s)$ be a $D$-variety over some differential field $K$. Its set of \emph{$D$-points} is the set
    \[(V,s)^{\sharp} = \{ a \in V(\mathcal{U}): s(a) = \delta(a)\} \text{ .}\]
    It is a definable subset of $V$.
\end{defn}

In particular, we can define the generic type of a $D$-variety:

\begin{fact}
    Let $(V,s)$ be a $D$-variety over a differential field $K$. There is a unique complete type $p(x) \in S(K)$ asserting that:
    \begin{itemize}
        \item $x \in (V,s)^{\sharp}$,
        \item $x$ does not belong to any proper subvariety of $V$.
    \end{itemize}
\end{fact}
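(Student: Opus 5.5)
Existence and uniqueness both go through the Zariski-closed sets of $V$. I will assume $V$ is irreducible over $K$, which is implicit in the phrase ``generic type of a $D$-variety'', and that $s$ is defined on a dense open $V_0 \subset V$. Since a non-generic point lies in a proper subvariety, we may restrict to $V_0$ throughout.

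\emph{Uniqueness.} Let $a$ realize such a type. Then $a$ is a generic point of $V$ over $K$ in the algebraic sense, so $I(a/K) = I(V)$ and the field $K(a)$ is $K$-isomorphic to the function field $K(V)$. The condition $\delta(a) = s(a)$ gives $\delta(a) \in K(a)$. Hence $K\langle a\rangle = K(a)$, and the derivation on it is forced: it extends $\delta$ on $K$ and sends the coordinates $x_i$ to the coordinates of $s$, which are rational functions in $K(V)$. So the differential ideal of differential polynomials over $K$ vanishing at $a$ is determined by $V$ and $s$ alone. Concretely, every differential polynomial can be reduced modulo the relations $x_i' = s_i(x)$ (after clearing denominators, which do not vanish at $a$) to an ordinary polynomial, and we then test membership in $I(V)$. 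By quantifier elimination in $\mathrm{DCF}_0$, the type of $a$ over $K$ is determined by this differential ideal, so the type is unique.

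\emph{Existence.} Take the function field $L = K(V)$. The point is that the derivation $\delta$ on $K$ extends to a derivation $D$ on $L$ with $D(x_i) = s_i$. For a polynomial ring this extension is free, and it descends to the quotient exactly when $D(I(V)) \subset I(V)$ (after localizing). That descent is precisely the condition that $s$ is a section of $\tau V$: for $P \in I(V)$ we have $P^{\delta}(x) + \sum \pd{P}{X_i}(x)\, s_i(x) = 0$ in $K(V)$. This compatibility check is the only real content of the proof, and it is where the prolongation equations are used.

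Once $(L, D)$ is a differential field extension of $K$, saturation of $\mathcal{U}$ lets us embed $L$ over $K$ into $\mathcal{U}$. The image $a$ of $(x_1, \ldots, x_n)$ then lies in $(V,s)^{\sharp}$ and is Zariski generic in $V$ over $K$. Therefore its type satisfies both required conditions.

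Finally, the two conditions form a partial type: the formula $x \in (V,s)^{\sharp}$ together with $x \notin W$ for each proper $K$-subvariety $W$. Existence shows this partial type is consistent, and uniqueness shows it is complete, which gives the claimed unique $p \in S(K)$.
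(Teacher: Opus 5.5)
Your proof is correct. You were right to make explicit that $V$ must be irreducible over $K$ and that the excluded subvarieties are those defined over $K$; the statement needs both.

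The paper gives no argument of its own, only the pointer to \cite{moosa2022six}. Your uniqueness half is the standard argument. A realization $a$ is a $K$-generic point of $V$ with $\delta(a)=s(a)\in K(a)$, so $K\langle a\rangle=K(a)$ is a $K$-copy of $K(V)$ whose derivation is forced by $s$, and quantifier elimination finishes.

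For existence, the usual route is to invoke the Zariski density of $(V,s)^{\sharp}$ in $V$ (a consequence of the geometric axioms for $\mathrm{DCF}_0$) and then use compactness or saturation. You instead build the differential field $(K(V),D)$ by hand from the prolongation equations and embed it into $\mathcal{U}$ over $K$. The two routes share the same core computation, since the geometric axioms are themselves verified by essentially this derivation-extension argument. Your version is self-contained and shows exactly where the hypothesis that $s$ lands in $\tau V$ enters. The density version gives more at once, namely sharp points avoiding proper subvarieties over arbitrary parameters.

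One small wording fix: you are defining $D$ as a map $K[X]\to K(V)$, so the descent condition is that $D$ vanishes on $I(V)$, not that $D(I(V))\subset I(V)$. What you actually verify is $P^{\delta}(x)+\sum_i \frac{\partial P}{\partial X_i}(x)\,s_i(x)=0$ in $K(V)$ for $P\in I(V)$. That is the correct condition, and it holds because this regular function on the domain of $s$ vanishes at every point there.
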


A proof of this fact can be found in \cite{moosa2022six}, in the discussion following Lemma 3.3. This type is called the \emph{generic type} of $(V,s)$. Crucially, many types are interdefinable with generic types of $D$-varieties.

More precisely, we call a type $p$ over some differential field $K$ \emph{finite-dimensional} if for any $a \models p$, the differential field $K\langle a \rangle$ has finite transcendence degree over $K$. We then have the following, see \cite[Theorem 3.7]{moosa2022six} for a proof.

\begin{fact}
    Any finite-dimensional type $p \in S(K)$ is interdefinable with the generic type of a $D$-variety over $K$.
\end{fact}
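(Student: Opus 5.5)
The plan is to realise $p$ by a tuple $c$ that generates $K\langle a\rangle$ as a field and is interdefinable with $a$, and then read off a $D$-variety from the locus of $c$. Fix $a \models p$ and set $d = \operatorname{trdeg}(K\langle a\rangle / K) < \infty$. Since $K\langle a\rangle = K(a, a', a'', \dots)$ is an increasing union of the fields $L_m = K(a, a', \dots, a^{(m)})$ and the transcendence degree is bounded by $d$, I will choose $m$ with $\operatorname{trdeg}(L_m/K) = d$. Then every coordinate of $a^{(m+1)}$ is algebraic over $L_m$.

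The first key step, which I expect to be the main obstacle, is showing that a finite tuple already generates $K\langle a\rangle$ as a field. I will show that $F := L_m(a^{(m+1)}) = K(c)$, with $c = (a, a', \dots, a^{(m+1)})$, is closed under $\delta$, so that $F = K\langle a\rangle$. The derivatives of the generators $a, \dots, a^{(m)}$ lie in $F$ by construction. For a coordinate $\alpha$ of $a^{(m+1)}$, I take its minimal polynomial $\mu$ over $L_m$ and differentiate $\mu(\alpha)=0$ to get
\[\mu^{\delta}(\alpha) + \mu'(\alpha)\,\delta(\alpha) = 0 .\]
Here $\mu^\delta$ has coefficients in $\delta(L_m) \subseteq F$. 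Since we are in characteristic $0$, $\mu'(\alpha) \neq 0$, so $\delta(\alpha) = -\mu^\delta(\alpha)/\mu'(\alpha) \in F$. Hence $F$ is a differential field containing $a$, and therefore $F = K\langle a\rangle$.

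Next I build the $D$-variety. Let $V$ be the locus of $c$ over $K$, an irreducible affine variety over $K$. Since $\delta(c) \in K(c)$, there is a $K$-rational map $s$ with $\delta(c) = s(c)$. As noted before the definition of $D$-points, $\delta(c) \in \tau V$ for $c \in V$. Thus $(c, s(c)) \in \tau V$ with $c$ generic in $V$, so the rational map $x \mapsto (x, s(x))$ lands in $\tau V$ on a dense open set and is a rational section of $\pi : \tau V \to V$. This makes $(V,s)$ a $D$-variety over $K$, and $c \in (V,s)^\sharp$ with $c$ in no proper $K$-subvariety of $V$.

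Finally, I compare types. By the uniqueness in the Fact on generic types of $D$-varieties, $\operatorname{tp}(c/K)$ is the generic type of $(V,s)$. Moreover $c \in \dcl(aK)$, since its entries are derivatives of $a$, and $a \in \dcl(cK)$, since $a$ is a sub-tuple of $c$. The maps $a \mapsto c$ and the coordinate projection $c \mapsto a$ are $K$-definable and mutually inverse on realizations. Therefore $p$ is interdefinable with the generic type of $(V,s)$.
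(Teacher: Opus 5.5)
Your proof is correct and matches the standard argument behind the reference the paper cites for this Fact (Theorem 3.7 of the cited six lectures on model theory and differential-algebraic geometry; the paper gives no proof of its own). As there, finite transcendence degree lets the finite tuple $c=(a,\dots,a^{(m+1)})$ generate $K\langle a\rangle$ as a field, the $K$-locus $V$ of $c$ with the $K$-rational $s$ satisfying $\delta(c)=s(c)$ is the required $D$-variety, $\tp(c/K)$ is its generic type, and $c$ is interdefinable with $a$ over $K$; you are also right to read ``irreducible over $K$'' as $K$-irreducible rather than absolutely irreducible, which is needed since $p$ need not be stationary when $K$ is not algebraically closed.
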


Note that in particular, we can associate to every finite-dimensional type a birational equivalence class, and any birational invariant can be used as an invariant of the type. 

We also have a notion of $D$-morphism: 

\begin{defn}
    A $D$-morphism between two $D$-varieties $(V,s)$ and $(W,r)$ is a rational map $\phi : V \rightarrow W$ such that the following diagram commutes:
    \[\begin{tikzcd}
        \tau V \arrow[r, "d \phi"] & \tau W \\
        V \arrow[u,"s"] \arrow[r,"\phi"]& W \arrow[u, "r"]
    \end{tikzcd}\]
    \noindent where $d \phi$ is the differential of $\phi$.
\end{defn}

This makes $D$-varieties into a category, which is proven in \cite{hrushovski2003model} to be equivalent to the category of types of finite rank and definable maps between them.

\section{Pfaffian types and functions} \label{CPfaff}

\subsection{Preliminaries on Pfaffian functions}\label{subsec: pfaff-prel}

We start by recalling the classic definition of Pfaffian chains and functions:

\begin{defn}\label{PfaffianDefn}
    Let $U \subseteq \m R^n$ be an open set. A finite sequence $(f_1,\ldots,f_N)$ of analytic functions $f_j : U \rightarrow \mathbb{R}$ is a \emph{ Pfaffian chain} if there are polynomials with real coefficients, $P_{ij} (\bar x , y_1, \ldots , y_i)$ for $1 \leq i \leq N$ so that $$\frac{\partial f_i }{\partial x_j} = P_{ij} ( \bar x, f_1 , \ldots , f_i).$$ We say a function $f: U \rightarrow \mathbb{R}$ is \emph{Pfaffian} if it is in a Pfaffian chain.

    Replacing $\mathbb{R}$ with $\mathbb{C}$ and real analytic with complex analytic, one obtains the definition of $\mathbb{C}$-Pfaffian. 
\end{defn}

There are numerous variations of the definition considered in the literature. For instance, when $\mc R$ is an o-minimal expansion of the reals, \cite{miller2002Pfaffian} replaces the polynomial functions in the above definition with the more general class of functions definable in $\mc R$. In this work, we will use two definitions adapted to the context of differentially closed fields. For rest of this subsection, $K$ with be a differential subfield of $\mathcal{U}$.

\begin{defn}
    Let $K < \mathcal{U}$ be a differential field. A finite sequence $(a_1, \cdots, a_N)$ of elements $a_i \in \mathcal{U}$ is a {\em $B$-definable (rational) Pfaffian chain} if for all $1 \leq i \leq N$ there is a polynomial $P_i(y_1,\cdots , y_i)$ (resp. a rational function) with coefficients in $K$ such that for all $i$:
    \[ a_i' = P_i(a_1 , \cdots , a_i) \text{ .}\]
    The number $N$ is called the \emph{order} of the chain.

    We say that $a \in M$ is \emph{$B$-definably (rationally) Pfaffian} if there is a $B$-definable Pfaffian chain $(a_1, \cdots, a_n)$ (resp. definable rational Pfaffian chain) and some $P \in K[x_1, \cdots , x_n]$ such that $a = P( a_1, \cdots , a_n)$. We say that $p \in S(B)$ is \emph{(rationally) Pfaffian} if some (any) $a \models p$ is $B$-definably (rationally) Pfaffian.
\end{defn}

The connection between the usual pfaffian definition and the definable version is reasonably clear, but we state and prove it for completeness:

\begin{prop}\label{pro: seidenberg-Pfaffian}
    Let $p \in S(F(t))$, where $F< \mathbb{C}$ is a finitely generated extension of $\mathbb{Q}$. Then $p$ is (rationally) Pfaffian if and only if there are some domain $U \subset \mathbb{C}$ and some holomorphic (rationally) $\mathbb{C}$-Pfaffian function $f : U \rightarrow \mathbb{C}$ such that $f \models p$ (viewing $f$ as an element of the differential field $\mathrm{Mer}(U)$).
\end{prop}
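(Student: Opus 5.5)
The plan is to use the Seidenberg embedding theorem in both directions, together with the fact that types over $F(t)$ are determined by the differential-algebraic relations satisfied by a realization. Throughout, $F(t)$ carries the derivation $d/dt$ and $F$ consists of constants. Then $F(t)$ sits inside $\mathrm{Mer}(U)$ for any domain $U$: $t$ becomes the coordinate function and the elements of $F$ become complex constants. Since $F(t)$ is countable, we may also assume that the fields $\mathbb{C}(t) \supseteq F(t)$, and any countably generated differential extension of it, embed over $F(t)$ into $\mathcal{U}$.

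\emph{($\Leftarrow$).} Suppose $f : U \to \mathbb{C}$ is holomorphic with $f \models p$. Let $(f_1, \dots, f_N)$ be a $\mathbb{C}$-Pfaffian chain containing $f$, with polynomials $P_i \in \mathbb{C}[x, y_1, \dots, y_i]$ (resp. rational functions). The coefficients of the $P_i$ lie in $\mathbb{C}$, not in $F(t)$, so the first step is to remove them.

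Let $c$ be the finite tuple of coefficients. The differential field $L = F(t)\langle f_1, \dots, f_N \rangle(c) \subseteq \mathrm{Mer}(U)$ is finitely generated. Its constant field contains $F(c)$, and $f$ is a chain element over $F(c)(t)$.

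To get down to $F(t)$, I would pass to the model-theoretic side. Embed $L$ into $\mathcal{U}$ over $F(t)$. Then $\tp(f/F(t)) = p$ is realized by an element that is definably Pfaffian over $F(t,c)$, with $c$ a tuple of constants. The task is now to replace $c$ by constants that are algebraic over $F$, or to absorb $c$ into the chain. Absorbing works directly: since $c' = 0$, each $c_j$ can be added at the front of the chain as a new element with $c_j' = 0$, which is a polynomial equation. After this, all coefficients of the chain lie in $F(t)$, in fact in $F[t]$ after clearing denominators. If they do not, adjoin $t$ itself to the chain via $t' = 1$. Hence $f$ is $F(t)$-definably (rationally) Pfaffian, so $p$ is. This direction only uses that "being Pfaffian" is preserved under the embedding. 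It is automatic because it is witnessed by finitely many differential polynomial identities.

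\emph{($\Rightarrow$).} Suppose $a \models p$ is definably Pfaffian via a chain $(a_1, \dots, a_N)$ over $F(t)$ and $a = P(a_1, \dots, a_N)$. The differential field $F(t)\langle a_1, \dots, a_N \rangle$ is finitely generated. By Seidenberg's embedding theorem it embeds over $F(t)$ into $\mathrm{Mer}(U)$ for some domain $U$.

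The images $f_i$ satisfy $f_i' = P_i(t, f_1, \dots, f_i)$ with $P_i$ having coefficients in $F[t] \subseteq \mathbb{C}[x]$. Multiplying through by denominators in $t$, or shrinking $U$ to avoid their zeros, the $P_i$ become polynomials in $x$ with complex coefficients. Shrinking $U$ further, we can make all the $f_i$ holomorphic. Then $f = P(f_1, \dots, f_N)$ is a $\mathbb{C}$-Pfaffian function, after appending it to the chain: its derivative is a polynomial in the earlier terms, so the chain stays triangular.

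Finally, the map embeds $F(t)\langle a \rangle$ into $\mathrm{Mer}(U)$ over $F(t)$. So $f$ and $a$ satisfy the same differential polynomial equations over $F(t)$, and by quantifier elimination in $\mathrm{DCF}_0$, $f \models p$ inside a differentially closed extension of $\mathrm{Mer}(U)$.

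\emph{Main obstacle.} The main subtlety is in ($\Rightarrow$): dividing by the denominators in $t$. For rational chains the $P_i$ may have denominators vanishing somewhere. The fix is to choose $U$ small enough to avoid the zeros of the denominators and the poles of the $f_i$; Seidenberg allows arbitrary shrinking of $U$. In ($\Leftarrow$), the only care needed is that the constants $c$ are genuinely constants in $\mathcal{U}$, which holds because the embedding is a differential embedding.
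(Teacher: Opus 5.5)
Your proof is correct and takes the same route as the paper: Seidenberg's embedding theorem (over $F(t)$) for ($\Rightarrow$), and embedding back into a differentially closed field for ($\Leftarrow$). In ($\Leftarrow$), absorbing the complex coefficients into the chain as constants with $c_j'=0$ usefully makes explicit a step the paper leaves unstated. One small slip in ($\Rightarrow$): the coefficients of the $P_i$ lie in $F(t)$, not $F[t]$, and neither multiplying through nor shrinking $U$ makes a denominator $q(t)$ polynomial in $x$; instead prepend $g=1/q(t)$ to the chain, which is allowed since $g'=-q'(t)g^{2}$.
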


\begin{proof}
    First assume that $p$ is Pfaffian, and fix some $F$-definable Pfaffian chain $a_1, \cdots , a_N$ with $a = P(a_1, \cdots , a_N)$. Then $F \langle a_1, \cdots , a_N \rangle$ is a finitely generated differential field, and by Seidenberg's embedding theorem there are some domain $U$ of $\mathbb{C}$ and an embedding $\sigma : F \langle a_1, \cdots , a_N \rangle \rightarrow \mathrm{Mer}(U)$. Potentially restricting $U$, we may assume $f = \sigma(a) $ is holomorphic. Then $\sigma(a_1), \cdots, \sigma(a_N)$ is a  Pfaffian chain, witnessing that $f$ is $\mathbb{C}$-Pfaffian.

    Conversely, assume that there are some open $U \subset \mathbb{C}$ and some holomorphic $f : U \rightarrow \mathbb{C}$ with $f \models p$. We embed $\mathrm{Mer}(U)$ into some differentially closed field $M$, and pick $a$ to be the image of $f$.

    The same proof works for rationally Pfaffian.
\end{proof}

\begin{lem}\label{lem: pfaff-field}
    Let $(b_n, \cdots , b_n)$ be a $K$-definable Pfaffian chain. Any element of $K\langle b_1, \cdots, b_n \rangle$ is part of a $K$-definable Pfaffian chain.
\end{lem}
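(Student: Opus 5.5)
The plan is to reduce to a direct computation: $K\langle b_1,\dots,b_n\rangle$ is just the field $K(b_1,\dots,b_n)$, and we append at most two new terms to the given chain so that the chain contains the chosen element. (We read the hypothesis as: $(b_1,\dots,b_n)$ is a $K$-definable Pfaffian chain.)

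\emph{Step 1: identify the field.} Since $b_i' = P_i(b_1,\dots,b_i)$ with $P_i \in K[y_1,\dots,y_i]$, the field $K(b_1,\dots,b_n)$ is closed under the derivation. This uses that $K$ is a differential field, so that $k' \in K$ for $k \in K$. Hence $K\langle b_1,\dots,b_n\rangle = K(b_1,\dots,b_n)$, and any element has the form $a = R(\bar b)/S(\bar b)$ with $R,S \in K[x_1,\dots,x_n]$ and $S(\bar b)\neq 0$.

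\emph{Step 2: derivatives of polynomial expressions.} For any $R \in K[\bar x]$ the chain rule gives
\[
R(\bar b)' = R^{\delta}(\bar b) + \sum_{i=1}^n \pd{R}{x_i}(\bar b)\, P_i(\bar b) = \tilde R(\bar b),
\]
where $\tilde R \in K[\bar x]$. Write $\tilde S$ for the analogous polynomial attached to $S$.

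\emph{Step 3: extend the chain.} Set $d = 1/S(\bar b)$. Then
\[
d' = -\frac{S(\bar b)'}{S(\bar b)^2} = -d^2\,\tilde S(\bar b),
\]
which is a polynomial over $K$ in $b_1,\dots,b_n,d$. So $(b_1,\dots,b_n,d)$ is again a $K$-definable Pfaffian chain. Next set $e = R(\bar b)\,d = a$. Then
\[
e' = \tilde R(\bar b)\,d - R(\bar b)\,\tilde S(\bar b)\,d^2,
\]
a polynomial over $K$ in $b_1,\dots,b_n,d$. Appending $e$ gives the chain $(b_1,\dots,b_n,d,e)$, which contains $a=e$ as a member.

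\emph{Degenerate cases.} If $S$ is constant, the term $d$ can be skipped. If $a\in K$, the one-term extension $a' = a' \in K$ (a degree-$0$ polynomial) already works.

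There is no real obstacle here. The only points to check are that the derivatives of the coefficients stay in $K$, which is used in Steps 1 and 2, and that the triangular shape is preserved: each appended term's derivative depends only on earlier terms and itself, which holds by construction.
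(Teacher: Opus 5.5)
Your proof is correct and uses the same two computations as the paper's: the chain rule, which shows that derivatives of polynomial expressions in the chain are again polynomial in the chain, and the identity $(1/b)' = -(1/b)^2 b'$ for inverses. The difference is organizational. The paper argues that the set of chain members is closed under each field operation and under $\delta$. You instead first note that $K\langle \bar b\rangle = K(\bar b)$ and then treat an arbitrary element $R(\bar b)/S(\bar b)$ in one step, which avoids merging chains and yields an explicit chain of length at most $n+2$.
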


\begin{proof}
    The differential field $K\langle b_1, \cdots , b_n \rangle$ is generated by taking field operations and derivatives. Therefore, all that is needed is to prove that these operations, when applied to elements of Pfaffian chains, gives elements of Pfaffian chains. For addition and multiplication, this is immediate. For taking multiplicative inverse, note that if $b$ is in a Pfaffian chain, we obtain a Pfaffian chain for $\frac{1}{b}$ of length one more by using that $(\frac{1}{b})' = -(\frac{1}{b})^2 b'$.

   \sloppy Now assume that $d_m$ is an element of some Pfaffian chain $(d_1, \cdots , d_m)$ so that $d_m' = P(d_1,\cdots, d_m)$ , for some $P \in K[x_1,\cdots , x_m]$. Then we have:
    \[d_m'' = \sum\limits_{i=1}^m \frac{\partial P}{x_i}(d_1, \cdots , d_m)d_i' + P^{\delta}(d_1, \cdots , d_m)\]
    \noindent where $P^{\delta}$ is obtained from $P$ by applying $\delta$ to its coefficients. Since the $d_i$ form a $K$-definable Pfaffian chain, the right hand side is equal to some polynomial with coefficient in $K$ applied to $d_1, \cdots , d_m$. Thus $b'$ is also in a Pfaffian chain.
\end{proof}

\begin{lem}\label{lem: pfaff-pres-fib}
    Let $p \in S(K)$ and $f : p \rightarrow f(p)$ a $K$-definable map. Then $p$ is definably Pfaffian if and only if $f(p)$ and $\tp(a/f(a)K)$ are both definably Pfaffian, for some (any) $a \models p$.
\end{lem}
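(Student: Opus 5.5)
Both directions will be derived from Lemma \ref{lem: pfaff-field}, which says that the elements lying in $K$-definable Pfaffian chains form a differential field containing $K\langle b_1,\dots,b_n\rangle$ for any chain $(b_i)$. We also use that definable closure in $\mathrm{DCF}_0$ is the generated differential field, i.e.\ $\dcl(aK)=K\langle a\rangle$.

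For the forward direction, assume $a\models p$ is $K$-definably Pfaffian, witnessed by a chain $(a_1,\dots,a_N)$ with $a=P(a_1,\dots,a_N)$. Since $f$ is $K$-definable, $f(a)\in K\langle a\rangle\subseteq K\langle a_1,\dots,a_N\rangle$. By Lemma \ref{lem: pfaff-field}, $f(a)$ lies in a $K$-definable Pfaffian chain, so $f(p)$ is Pfaffian. For the fibre type $\tp(a/f(a)K)$, the same chain $(a_1,\dots,a_N)$ has polynomials with coefficients in $K\subseteq K\langle f(a)\rangle$. It is therefore a $K\langle f(a)\rangle$-definable chain exhibiting $a$.

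For the converse, let $e=f(a)$. Pick a $K$-definable chain $(e_1,\dots,e_M)$ with $e=Q(\bar e)$, and a $K\langle e\rangle$-definable chain $(c_1,\dots,c_L)$ with $a=R(\bar c)$ and $c_j'=R_j(c_1,\dots,c_j)$, where $R,R_j$ have coefficients in $K\langle e\rangle$. The plan is to concatenate the two chains into $(e_1,\dots,e_M,c_1,\dots,c_L)$. This requires rewriting every coefficient in $K\langle e\rangle$ as a polynomial in elements of a $K$-definable chain, and controlling the derivatives of those new elements.

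This is the main obstacle: coefficients in $K\langle e\rangle$ may involve inverses and derivatives of $e$, not only polynomials in the $e_i$. To handle it, apply Lemma \ref{lem: pfaff-field} to the finitely many coefficients $\gamma_1,\dots,\gamma_k\in K\langle e\rangle\subseteq K\langle e_1,\dots,e_M\rangle$ appearing in $R$ and the $R_j$. Each $\gamma_l$ lies in some $K$-definable chain. Concatenating these chains with $(e_1,\dots,e_M)$ gives one $K$-definable chain $(d_1,\dots,d_T)$ in which every $\gamma_l$ is an entry. Concatenation preserves triangularity, since each block only refers to itself and $K$.

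Now form $(d_1,\dots,d_T,c_1,\dots,c_L)$. Each $c_j'=R_j(c_1,\dots,c_j)$ becomes a polynomial over $K$ in $d_1,\dots,d_T,c_1,\dots,c_j$, obtained by substituting the entries $d_t$ for the coefficients $\gamma_l$. So the combined sequence is a $K$-definable Pfaffian chain. Likewise $a=R(\bar c)$ is a polynomial over $K$ in $\bar d,\bar c$, so $p$ is definably Pfaffian.

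For the rational variant the same argument works, using the rational analogue of Lemma \ref{lem: pfaff-field}. The "some (any)" clause holds automatically, since every property involved depends only on types over $K$.
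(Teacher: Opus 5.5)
Your proof is correct and follows the same route as the paper. The forward direction uses $\dcl(aK)=K\langle a\rangle$ together with Lemma \ref{lem: pfaff-field}, and the converse extends a $K$-definable chain for $f(a)$ by a $K\langle f(a)\rangle$-definable chain for $a$. Your explicit step of absorbing the coefficients from $K\langle f(a)\rangle$ into a $K$-definable chain via Lemma \ref{lem: pfaff-field} fills in the detail the paper leaves implicit when it says the chain ``can be extended.''
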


\begin{proof}
    The left to right direction is immediate: adding parameters to the base preserves Pfaffianess, and in $\mathrm{DCF}_0$, definable closure is generated by field operations and derivatives, all of which preserve Pfaffianess, by Lemma \ref{lem: pfaff-field}.

    Conversely, suppose that both $f(p)$ and $\tp(a/f(a)K)$ are definably Pfaffian. So $f(a)$ is part of a $K$-definable Pfaffian chain. As $a$ is $K\langle f(a) \rangle$ definably Pfaffian, we can extend this chain to a $K$-definable Pfaffian chain containing $a$.

\end{proof}

As a corollary, we obtain:

\begin{cor}\label{cor: semi-min-pres-pfaff}
    A type $p \in S(K)$ is $K$-definably Pfaffian if and only if each step in any semiminimal analysis over $K$ is $K$-definably Pfaffian. 
\end{cor}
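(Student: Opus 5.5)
The plan is to induct on the length $n$ of a semi-minimal analysis
\[p = p_n \xrightarrow{f_n} p_{n-1} \xrightarrow{f_{n-1}} \cdots p_1 \xrightarrow{f_1} p _0,\]
applying Lemma \ref{lem: pfaff-pres-fib} once per step. Such an analysis exists whenever $p$ has finite $U$-rank, which is the only case where the statement has content. Here the "steps" of the analysis are the types $p_0$ and $\tp(a_i/f_i(a_i)K)$ for $a_i \models p_i$, $i \geq 1$, where the latter is understood as being over the parameter set $K\langle f_i(a_i)\rangle$.

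For the left-to-right direction, fix $a \models p$ and set $a_n = a$, $a_{i-1} = f_i(a_i)$. Each $a_i$ lies in $\dcl(aK) = K\langle a\rangle$. By Lemma \ref{lem: pfaff-field}, every element of $K\langle a \rangle$ is $K$-definably Pfaffian, since $a$ sits in a $K$-definable Pfaffian chain and its coordinates generate $K\langle a\rangle$. Hence each $p_i$ is $K$-definably Pfaffian, and in particular $p_0$ is. The fibre steps $\tp(a_i/a_{i-1}K)$ are then Pfaffian over $K\langle a_{i-1}\rangle$, because a $K$-definable chain is also $K\langle a_{i-1}\rangle$-definable. Equivalently, one can apply the left-to-right direction of Lemma \ref{lem: pfaff-pres-fib} to the map $f_i : p_i \to p_{i-1}$.

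For the right-to-left direction, I would show by induction on $i$ that $p_i$ is $K$-definably Pfaffian. The base case $i=0$ is the hypothesis. For the inductive step, $p_{i-1} = f_i(p_i)$ is Pfaffian by induction and $\tp(a_i/f_i(a_i)K)$ is Pfaffian by hypothesis, so the right-to-left direction of Lemma \ref{lem: pfaff-pres-fib} gives that $p_i$ is Pfaffian. At $i=n$ this is the statement that $p$ is $K$-definably Pfaffian.

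The main obstacle is hidden in the right-to-left half of Lemma \ref{lem: pfaff-pres-fib}, which I am entitled to cite, but I would double-check it. That lemma takes a chain over $K\langle f(a)\rangle$ whose polynomials have coefficients in $K\langle f(a)\rangle$, and converts it into a chain over $K$. To do this, prepend a $K$-definable Pfaffian chain containing all coefficients; such a chain exists by Lemma \ref{lem: pfaff-field}, since the coefficients lie in $K\langle f(a)\rangle$ and $f(a)$ is in a $K$-chain. Each original polynomial then becomes a polynomial over $K$ in the earlier chain entries together with the new ones, so triangularity is preserved. For the rational Pfaffian variant the same argument works verbatim.
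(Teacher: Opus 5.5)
Your proposal is correct and matches the paper's argument. The paper states the corollary without proof as an immediate consequence of Lemma \ref{lem: pfaff-pres-fib}, applied step by step along the analysis exactly as in your induction, with Lemma \ref{lem: pfaff-field} handling the left-to-right direction. Your check of the right-to-left half of that lemma, prepending a $K$-definable chain for the coefficients lying in $K\langle f(a)\rangle$, spells out what the paper's one-line proof leaves implicit.
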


Because of this result, we will focus our efforts on semi-minimal types. Note however that computing a semi-minimal analysis of a type is a highly non-trivial problem. 

\subsection{Types internal to the constants}

We can start with the case of a type internal to the constants. Again, we fix a base differential field $K$.

\begin{lem}\label{lem: over-C-pfaff}
    Let $p \in S(K)$ be a $\mathbb{C}$-internal type. Then there is a $K$-definable map $f : p \rightarrow f(p)$ such that $f(p)$ is the type of a tuple of constants, and for some (any) $a \models p$, the type $\tp(a/f(a)B)$ is weakly $\mathbb{C}$-orthogonal and stationary. Moreover, the type $\tp(a/f(a)K)$ is $K$-definably Pffafian if and only if $p$ is $K$-definably Pfaffian.
\end{lem}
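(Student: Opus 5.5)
For $a \models p$, I will take $f(a)$ to be a tuple generating the constants of $K\langle a\rangle$. The constant field $\mathcal{C}_{K\langle a\rangle}$ of $K\langle a \rangle$ has finite transcendence degree over the constants of $K$, since $p$ is finite-dimensional, being $\mathbb{C}$-internal. First I would enlarge: choose a tuple $e$ of constants in $\dcl(aK)$ such that $\dcl(eK)$ contains every constant of $\dcl(aK)$. Concretely, take $e$ to be a transcendence basis of $\mathcal{C}_{K\langle a\rangle}$ over $\mathcal{C}_K$, together with a primitive element of the remaining finite extension; this is possible because $\mathcal{C}_{K\langle a \rangle}$ is finitely generated as a field, being a subfield of a finitely generated field. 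The map $a \mapsto e$ is $K$-definable, since $e$ is a tuple of differential rational functions of $a$, and $f(p)=\tp(e/K)$ is a type of a tuple of constants.

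Next I check that $\tp(a/eK)$ is weakly $\mathbb{C}$-orthogonal and stationary. Stationarity is immediate because $eK$ generates a differential field and types over differential fields in $\mathrm{DCF}_0$ are stationary; one could replace $K\langle e\rangle$ by its algebraic closure if needed. For weak orthogonality, suppose $a \nind_{K\langle e\rangle} c$ for some constants $c$. Using the standard fact that $K\langle a\rangle$ and $\mathbb{C}$ are linearly disjoint over the constants of $K\langle a\rangle$, any new constant algebraic dependence would give a constant in $K\langle a\rangle^{\mathrm{alg}}$ not algebraic over $K\langle e\rangle$. Such a constant is algebraic over $\mathcal{C}_{K\langle a\rangle}$, since the algebraic closure of a differential field has constants algebraic over the original constants, hence over $K\langle e\rangle$, which is a contradiction. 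Thus $\tp(a/eK)$ is weakly $\mathbb{C}$-orthogonal. This step, namely correctly handling algebraic versus definable closure of the constants and invoking linear disjointness, is the main technical point; alternatively one could use the $\mathbb{C}$-reduction restricted to constants together with the binding group transitivity characterization.

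For the "moreover" part, I apply Lemma \ref{lem: pfaff-pres-fib} to $f$: the type $p$ is $K$-definably Pfaffian if and only if both $f(p)$ and $\tp(a/f(a)K)$ are. So it suffices to show that $f(p)$, a type of a tuple of constants, is always $K$-definably Pfaffian.

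Each constant $c$ satisfies $c'=0$, so any tuple of constants $(c_1,\dots,c_m)$ is a Pfaffian chain with $P_i=0$. The only subtlety is that in the definition of $K$-definably Pfaffian the chain polynomials have coefficients in $K$, which $0$ trivially satisfies. Hence $e$ is $K$-definably Pfaffian, and both directions follow from Lemma \ref{lem: pfaff-pres-fib}.
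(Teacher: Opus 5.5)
Your ``moreover'' argument is exactly the paper's: apply Lemma \ref{lem: pfaff-pres-fib} to $f$ and note that a tuple of constants is a $K$-definable Pfaffian chain with $P_i=0$. For the first part the paper gives no argument and simply cites \cite[Lemma 3.9]{eagles2024internality}. Your explicit choice $f(a)=e$, a generating tuple of the constant field $C_a$ of $K\langle a\rangle$, is a different route. Weak orthogonality does hold for it, and more directly than you wrote. Since $C_a\subseteq K\langle e\rangle\subseteq K\langle a\rangle$ and $K\langle a\rangle$ is linearly disjoint from $\mathbb{C}$ over $C_a$, every tuple $c$ of constants satisfies
\[\mathrm{trdeg}(c/C_a)\ \ge\ \mathrm{trdeg}(c/K\langle e\rangle)\ \ge\ \mathrm{trdeg}(c/K\langle a\rangle)\ =\ \mathrm{trdeg}(c/C_a),\]
so $a\ind_{K\langle e\rangle}c$. (A minor point: $C_a$ need not be finitely generated as a field. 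What you need is finite generation over $\mathcal{C}_K$, which follows from linear disjointness of $K$ and $C_a$ over $\mathcal{C}_K$.)

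The genuine gap is stationarity. It is false that types over differential fields are stationary. In fact $\tp(a/L)$ is stationary iff $L$ is relatively algebraically closed in $L\langle a\rangle$, and nothing in your argument gives this for $L=K\langle e\rangle$. It can actually fail.

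\emph{Counterexample.} Take $K=\mathbb{Q}(t)$ with $t'=1$, and let $p$ be the generic type of $y'=y/(2t)$, realized by $a=c\sqrt t$ with $c$ a transcendental constant.
\begin{itemize}
\item $p$ is stationary, since $K\langle a\rangle=K(a)$ is purely transcendental over $K$.
\item $p$ is $\mathbb{C}$-internal, since $a=(c/c_1)a_1$ for an independent realization $a_1$.
\item The constant field of $K\langle a\rangle=\mathbb{Q}(e,a)$ is $\mathbb{Q}(e)$, where $e=a^2/t$.
\end{itemize}
But $\tp(a/K\langle e\rangle)$ is the algebraic type $y^2=et$ with the two realizations $\pm a$, so it is not stationary.

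No other $f$ works either. Any admissible $f(a)$ lies in $\mathbb{Q}(e)$. Either $f(a)\in\mathbb{Q}$, and then $\tp(a/K)$ is not weakly $\mathbb{C}$-orthogonal. Or $a$ is algebraic over $K\langle f(a)\rangle\subseteq\mathbb{Q}(t,e)$ without lying in it, and then the type is not stationary. So the stationarity clause needs an extra hypothesis such as $K=K^{\mathrm{alg}}$, which is the situation the paper reduces to in Theorem \ref{theo: pfaff-binding-crit}.

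Your fallback of passing to $K\langle e\rangle^{\mathrm{alg}}$ does not help. It changes the base, so it proves a different statement and no longer feeds into Lemma \ref{lem: pfaff-pres-fib}.

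Even when $K=K^{\mathrm{alg}}$, you would still have to prove that $K\langle e\rangle$ is relatively algebraically closed in $K\langle a\rangle$, and this must genuinely use internality. Consider $y=\sqrt{t+e}$ over $\mathbb{Q}(t)^{\mathrm{alg}}$, whose type is almost internal but not internal. The constants of $K\langle y\rangle$ are generated by $e=y^2-t$, yet $t+e$ is not a square in $K(e)$. So $\tp(y/K\langle e\rangle)$ is again non-stationary.
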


\begin{proof}
    Before the moreover part, this is a well-known result, see \cite[Lemma 3.9]{eagles2024internality} for a proof. For the moreover part, by Lemma \ref{lem: pfaff-pres-fib}, all we have to show is that $f(p)$ is definably Pfaffian, which is immediate as the type of constant elements always is.
\end{proof}

\begin{lem}\label{lem: morl-prod-pfaff}
    Let $p \in S(K)$ and let $q$ be the type of any tuple of realizations of $p$. Then $q$ is definably Pfaffian if and only if $p$ is definably Pfaffian. 
\end{lem}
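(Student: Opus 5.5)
Both directions reduce to closure properties of $K$-definable Pfaffian chains, namely Lemma \ref{lem: pfaff-field} and Lemma \ref{lem: pfaff-pres-fib}.

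\emph{Right to left.} Suppose $p$ is definably Pfaffian, and let $(a_1,\dots,a_m)\models q$ with each $a_i\models p$.

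We first note what ``Pfaffian'' means for a tuple. A tuple is definably Pfaffian when each coordinate is a polynomial over $K$ in a single $K$-definable Pfaffian chain. Equivalently, by Lemma \ref{lem: pfaff-field}, the tuple lies in $K\langle b_1,\dots,b_N\rangle$ for some $K$-definable Pfaffian chain $(b_1,\dots,b_N)$.

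For each $i$, choose a $K$-definable chain $(b^i_1,\dots,b^i_{N_i})$ with $a_i\in K[b^i_1,\dots,b^i_{N_i}]$. Concatenate these into
\[(b^1_1,\dots,b^1_{N_1},b^2_1,\dots,b^m_{N_m}).\]
This is again a $K$-definable Pfaffian chain. Indeed, the derivative of each $b^i_j$ is a polynomial over $K$ in $b^i_1,\dots,b^i_j$, and all of these occur earlier in the concatenated list, so the triangularity condition holds (we just regard the polynomial as having dummy variables). Every coordinate $a_i$ is then a polynomial in this single chain, so $q$ is definably Pfaffian.

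\emph{Left to right.} Suppose $q$ is definably Pfaffian. The projection onto any coordinate gives a $K$-definable map $\pi\colon q\to p$. Lemma \ref{lem: pfaff-pres-fib} then yields that $p=\pi(q)$ is definably Pfaffian. Alternatively, one can argue directly: $a_1$ is a coordinate of a tuple that is polynomial in a $K$-definable chain, so $a_1$ itself is polynomial in that chain.

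\emph{Expected obstacle.} The only real point is making precise what ``definably Pfaffian'' means for a tuple, as opposed to a single element. I would fix the convention above, and the argument then goes through without difficulty. Independence of the $a_i$ plays no role, so the statement holds for arbitrary tuples of realizations.
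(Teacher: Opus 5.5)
Your proposal is correct and follows the paper's argument. Left to right projects onto a coordinate (the paper's ``projection maps can be used at the last step of a Pfaffian chain''), and right to left concatenates chains for the individual coordinates, which is the paper's ``duplicating'' a Pfaffian chain for $p$ into one for $q$; your convention for tuples and your check that the concatenated list stays triangular just make explicit what the paper leaves implicit.
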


\begin{proof}
    The left to right direction is because projection maps can be used at the last step of a Pfaffian chain, the right to left because any Pfaffian chain for $p$ can be duplicated into a Pfaffian chain for $q$.
\end{proof}

As discussed before, we call a $\m C$-internal type $p \in S(K)$ \emph{fundamental} if the action of the binding group $G=\bg{K}{p}$ on $p$ is free. The type $p$ is weakly $\mathbb{C}$-orthogonal if and only if that action is transitive. Let $p \in S(K)$ be any $\mathbb{C}$-internal type. By Lemma \ref{lem: over-C-pfaff}, determining if $p$ is $K$-definably Pfaffian reduces to determining if a weakly $\mathbb{C}$-orthogonal type is $K$-definably Pfaffian. By Lemma \ref{lem: morl-prod-pfaff}, we can replace $p$ the type of any tuple of its realizations. In particular, by \cite[Fact 2.4]{jaoui2022abelian} and the discussion preceding it, we can assume that $p$ is fundamental, without changing the binding group.

We now recall the following fact, due to Kolchin \cite[§VI.9]{KolchinDAAG}, but written in this form by Jaoui-Moosa \cite{jaoui2022abelian}:

\begin{fact}\label{fact: kolchin}
    Let $K$ be an algebraically closed field, and $p \in S(K)$ a $\mathbb{C}$-internal, fundamental  and weakly $\mathbb{C}$-orthogonal type. Then there is a connected algebraic group $G$, defined over $\mathbb{C} \cap K$, such that $p$ is interdefinable with the generic type of a full logarithmic-differential equation on $G$ over $K$. Moreover, the group $G(\mathbb{C})$ is definably isomorphic to the binding group of $p$.
\end{fact}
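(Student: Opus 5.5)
The plan is to realize $p(\mathcal{U})$ as a principal homogeneous space for an algebraic group of constants, trivialize that torsor algebraically over $K$, and then read off the differential equation from the induced $D$-structure.

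\textbf{Step 1: the binding group is a group of $\mathbb{C}$-points.} Since $p$ is fundamental and weakly $\mathbb{C}$-orthogonal, the binding group $\bg{K}{p}$ acts freely and transitively on $p(\mathcal{U})$, so $p(\mathcal{U})$ is a definable torsor for it. The binding group is $K$-definable and internal to $\mathbb{C}$. Because $\mathbb{C}$ is stably embedded and a pure algebraically closed field, it is definably isomorphic to a group definable in $\mathbb{C}$. By the Weil–Hrushovski–van den Dries theorem, it is therefore definably isomorphic to $G(\mathbb{C})$ for some algebraic group $G$ over $\mathbb{C}$. I would take $G$ connected: $p$ is stationary, so the binding group is connected. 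To get $G$ defined over $\mathbb{C}\cap K$, I would use that the isomorphism type is $K$-invariant together with stable embeddedness. Since $K$ is algebraically closed, $\mathbb{C}\cap K$ is an algebraically closed field, hence an elementary substructure of $\mathbb{C}$, so $G$ can be descended to it. This descent is where I expect the first real difficulty.

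\textbf{Step 2: an algebraic torsor with a $K$-point.} By the fact that finite-dimensional types are interdefinable with generic types of $D$-varieties, $p$ is the generic type of some $D$-variety $(V,s)$ over $K$. The definable action of $G(\mathbb{C})$ on $p(\mathcal{U})$ should extend to a rational action of $G$ on $V$, after replacing $V$ by a birational model. This uses the group-chunk style argument that transitive definable actions in $\mathrm{DCF}_0$ come from algebraic homogeneous spaces. With free, transitive, generic behaviour, $V$ becomes an algebraic principal homogeneous space for $G$ over $K$. Since $K$ is algebraically closed, $V$ has a $K$-point $v_0$, and $g\mapsto g\cdot v_0$ gives a $K$-isomorphism $V\cong G$.

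\textbf{Step 3: identify the equation and the binding group.} Transport the section $s$ along this isomorphism to get a $D$-structure on $G_K$. Elements of $G(\mathbb{C})$ act by $D$-automorphisms, so the structure is invariant under translation by constant points. This invariance forces the logarithmic derivative $\ell(x)=\delta(x)x^{-1}$ to be constant on $D$-points, equal to some $A\in \mathrm{Lie}(G)(K)$. Hence $(V,s)^\sharp$ is the solution set of $\ell(x)=A$. The equation is full, meaning its generic type is the type of a point generic in $G$, because $p$ is the generic type of $V$. Finally, the binding group of this equation acts by right translation by $G(\mathbb{C})$. Transporting along the isomorphisms gives the definable isomorphism between $G(\mathbb{C})$ and the binding group of $p$.

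\textbf{Main obstacle.} I expect the hardest step to be Step 2: extending the merely definable action to a genuine algebraic action on a birational model, with everything kept over $K$. I would first try Weil's group-chunk theorem applied to the generic action, and use the $K$-point to make the trivialization defined over $K$.
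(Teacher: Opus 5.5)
The paper gives no proof of this Fact; it is quoted from Kolchin in the form given by Jaoui--Moosa. Your outline is essentially Kolchin's torsor argument, but as written it has a genuine gap between Steps 1 and 2: the action of $G(\mathbb{C})$ on $p(\mathcal{U})$ that you feed into Weil's theorem is not defined over $K$.

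\textbf{Why the transported action is not over $K$.} The $K$-definable action available is that of the binding group $B=\mathrm{Aut}_K(p/\mathbb{C})$. Your descent in Step 1 only puts the \emph{group} $G$ over $\mathbb{C}\cap K$. The isomorphism $B\cong G(\mathbb{C})$ still needs extra parameters, and for non-commutative $G$ it generally cannot be chosen over $K$. In the situation of the conclusion, $p(\mathcal{U})=x_0G(\mathbb{C})$, $B$ acts by left multiplication by $x_0G(\mathbb{C})x_0^{-1}$, and the isomorphism is conjugation by $x_0$. Take $G=\mathrm{SL}_2$, where all automorphisms are inner. A $K$-definable isomorphism would give a $K$-definable map $c:p(\mathcal{U})\to\mathrm{PGL}_2(\mathbb{C})$ with $c(x_0h)=c(x_0)h$. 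Weak orthogonality forces all values of $c$ into $\mathrm{PGL}_2(\mathbb{C}\cap K)$, which is impossible as $h$ ranges over $\mathrm{SL}_2(\mathbb{C})$. (For commutative $G$ conjugation is trivial and your argument goes through as written.)

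Consequently your rational action and homogeneous space are defined only over $K$ together with these parameters, typically $K\langle a\rangle$ for some $a\models p$. There a rational point exists trivially, and no descent to $K$ results. The same conflation appears in Step 3: the binding group of $\ell_G(x)=A$ does not act by right translations, since for $\sigma\in B$ one has $\sigma(x_0g)=x_0g_\sigma g$. Right translation by $G(\mathbb{C})$ is the $K$-definable action that \emph{commutes} with the binding group.

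\textbf{The missing ingredient.} You need an intrinsic, $K$-definable right action of a group of constants. Fix $a\models p$. By fundamentality and compactness, every $b\models p$ is $f(a,c)$ for a fixed $K$-definable $f$ and a constant tuple $c$. After quotienting the parameter set (elimination of imaginaries in $\mathbb{C}$), $c$ is determined by $(a,b)$. Define $c\ast d$ by $f(f(a,c),d)=f(a,c\ast d)$.

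Weak orthogonality says all realizations of $p$ have the same type over $K\cup\mathbb{C}$. So $\ast$ and the parameter set do not depend on the choice of $a$. Hence $\ast$ is $K$-definable and $(b,c)\mapsto f(b,c)$ is a $K$-definable right action. Stable embeddedness puts this group in the pure field $\mathbb{C}$ over $\dcl(K)\cap\mathbb{C}=\mathbb{C}\cap K$, and Weil--Hrushovski gives $G$ over $\mathbb{C}\cap K$. The map $\sigma\mapsto c_\sigma$, where $\sigma(a)=f(a,c_\sigma)$, is then a definable isomorphism $B\to G(\mathbb{C})$ over $a$. This construction also replaces your descent argument in Step 1.

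With this action, Steps 2 and 3 work if you trivialize by $g\mapsto v_0\cdot g$, but Step 2 needs one further check. Freeness on $p(\mathcal{U})$ says nothing directly about stabilizers in $G(\mathcal{U})$ of the algebraic action, which could a priori be finite and non-constant. What rules this out is the following. Once $K(a)=K\langle a\rangle$, the division map $(a,b)\mapsto c$ is a $K$-rational map $V\times V\dashrightarrow G$. So the orbit maps $g\mapsto v\cdot g$ have rational sections, and comparing dimensions they are birational. Only then does Weil's theorem yield a principal homogeneous space over $K$, rather than a quotient of $G$ by a finite group.
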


This allows us to show that internal types with specific binding groups are Pfaffian:

\begin{lem}\label{lem: bin-grp-implies-pfaff}
Let $p \in S(K)$ be fundamental, weakly $\mathbb{C}$-orthogonal, with $K$ algebraically closed, and let $G$ be its binding group. If $G$ is finite or definably isomorphic to either $\m G_a (\m C)$, $\m G_m (\m C)$, $\m G_a(\m C) \rtimes \m G_m(\m C)$ or $\mathrm{PSL}_2 (\m C)$, then $p$ is $K$-definably Pfaffian. 
\end{lem}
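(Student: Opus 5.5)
By Fact \ref{fact: kolchin}, $p$ is interdefinable with the generic type of a full logarithmic-differential equation $\ell\partial_G(x) = a$ on a connected algebraic group $G$ over $\mathbb{C}\cap K$, for some $a$ in the Lie algebra of $G$ over $K$. Interdefinable types are simultaneously $K$-definably Pfaffian, since the two realizations lie in each other's differential fields over $K$ and Lemma \ref{lem: pfaff-field} applies. So it is enough to treat each listed group and write down an explicit Pfaffian chain over $K$ for a generic solution, in affine coordinates on $G$.

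\textbf{Finite case.} If $G$ is finite, it is connected by Fact \ref{fact: kolchin}, so $G$ is trivial. Then $p$ is realized in $\dcl(K)=K$, since $K$ is algebraically closed and the type is fundamental with trivial binding group. Elements of $K$ are trivially Pfaffian: take the chain $a_1 = c$ with $a_1' = c'$ a constant polynomial with coefficient in $K$.

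\textbf{$\m G_a$ and $\m G_m$.} For $\m G_a$ the equation is $y' = a$ with $a\in K$, which is already a chain of order one. For $\m G_m$ it is $y'/y = a$, that is $y' = ay$, again a polynomial chain.

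\textbf{$\m G_a\rtimes\m G_m$.} Write elements as pairs $(u,v)$ with $v\neq 0$, acting by $x\mapsto vx+u$. The logarithmic derivative then yields a triangular system $v' = \alpha v$, $u' = \beta v + \gamma u$, up to the choice of convention, with $\alpha,\beta,\gamma\in K$. This is polynomial and triangular, hence a Pfaffian chain $(v,u)$. The generic point of $G$ is $K$-interdefinable with $(u,v)$, and $1/v$ is handled by Lemma \ref{lem: pfaff-field}.

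\textbf{$\mathrm{PSL}_2$.} This is the main case. I would use the standard reduction: $p$ is interdefinable with the type of a fundamental system of an order-two equation, or equivalently of a Riccati equation $y' = a_0 + a_1 y + a_2 y^2$ over $K$. The approach is to realize $\mathrm{PSL}_2$ as acting on $\mathbb{P}^1$ and note that the logarithmic-differential equation on $\mathrm{PSL}_2$ projects to a Riccati equation on $\mathbb{P}^1$. The stabilizer of $\infty$ is $\m G_a\rtimes\m G_m$, so the fiber structure is as follows. A generic Riccati solution $y_1$ satisfies the first-order polynomial equation above, which is Pfaffian of order one. Over $K\langle y_1\rangle$, the fiber of $\mathrm{PSL}_2\to \mathbb{P}^1$ is a torsor under the Borel subgroup, and by the previous case it is Pfaffian. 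Concretely, substituting $y = y_1 + 1/z$ gives the linear equation $z' = -(a_1+2a_2y_1)z - a_2$, which can be solved by a $\m G_a\rtimes\m G_m$-chain. A fundamental system for Riccati consists of three solutions, so I would keep adjoining solutions via such triangular substitutions, each step polynomial in the previous chain elements. Lemma \ref{lem: pfaff-pres-fib} glues the base $\tp(y_1/K)$ with the fiber type over $K\langle y_1\rangle$. Lemma \ref{lem: morl-prod-pfaff} allows passing between a fundamental tuple and $p$, and Lemma \ref{lem: pfaff-field} covers the $\dcl$ steps.

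The main obstacle is making the $\mathrm{PSL}_2$ reduction to Riccati rigorous: identifying the generic type of the logarithmic-differential equation on $\mathrm{PSL}_2$ with the type of three independent Riccati solutions up to $K$-interdefinability, which uses the sharply 3-transitive action on $\mathbb{P}^1$. A second point to check is that the Borel-fiber step remains Pfaffian over the enlarged base $K\langle y_1\rangle$, which need not be algebraically closed. I would handle this by noting that the explicit linear substitution gives polynomial right-hand sides over $K[y_1]$ directly, without invoking Fact \ref{fact: kolchin} again.
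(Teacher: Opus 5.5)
Your argument is correct. It differs from the paper's in that you compute explicitly where the paper mostly cites.

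\textbf{Finite case.} The paper uses stable embeddedness to get $a\in K(\mathbb{C})^{\mathrm{alg}}$. It then differentiates a minimal polynomial relation, which gives $a'=-P^{\delta}(a)/\pd{P}{x}(a)$ as a polynomial in $a$, with coefficients a priori only in $K(\mathbb{C})$. You instead use connectedness in Fact~\ref{fact: kolchin} to make the binding group trivial, so $p$ is realized in $\dcl(K)=K$. This is cleaner and lands directly over $K$.

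\textbf{Affine group and $\mathrm{PSL}_2$.} The paper invokes \cite[Proposition 6.5]{jaoui2022abelian} to pass to a Riccati equation in both cases. Since $p$ has rank $2$ or $3$, this has to be read as interdefinability with a fundamental system (a Morley power) of Riccati solutions, and your construction makes exactly that explicit.

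For $\m G_a\rtimes\m G_m$, your triangular-system computation avoids Riccati altogether. With $g=\begin{pmatrix} v&u\\0&1\end{pmatrix}$ and $g'=Ag$ one actually gets $v'=\alpha v$, $u'=\alpha u+\beta$ rather than your displayed form. In either convention, though, the system is linear in the matrix entries, hence polynomial and triangular, which is all you use.

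For $\mathrm{PSL}_2$, the step you flag as the main obstacle is short. The map $g\mapsto(g\cdot 0,g\cdot 1,g\cdot\infty)$ is an isomorphism over $\mathbb{Q}$ from $\mathrm{PGL}_2$ onto the pairwise distinct triples in $(\mathbb{P}^1)^3$. If $g'=Ag$ with $A\in\mathfrak{sl}_2(K)$, then $g\cdot c$ satisfies the Riccati equation determined by $A$ for every constant $c$. Hence $K\langle g\rangle=K(y_1,y_2,y_3)$ with $y_i'=R(y_i)$ for some $R\in K[x]$. So $(y_1,y_2,y_3)$ is already a Pfaffian chain over $K$; equivalently, apply Lemma~\ref{lem: morl-prod-pfaff} to the generic Riccati type. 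The Borel-fiber substitution $y=y_1+1/z$ is therefore unnecessary, though correct. So is your concern that the base $K\langle y_1\rangle$ need not be algebraically closed.
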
 

\begin{proof}
    Assume first that $G$ is finite. Let $a \models p$, by stable embeddedness we have $a \in K(\mathbb{C})^{\mathrm{alg}}$. So there is $P \in K(\mathbb{C})[x] \setminus K(\mathbb{C})$ such that $P(a) = 0$. Taking derivatives, we obtain $a' = - P^{\delta}(a) \times \frac{1}{\pd{P}{x}(a)}$, where $P^{\delta}$ is obtained by applying the ambient derivative to the coefficients of $P$. Because $P(a) = 0$, we have $K(\mathbb{C})(a) = K(\mathbb{C})[a]$, thus $\frac{1}{\pd{P}{x}(a)}$ is a polynomial in $a$. Therefore $p$ is $K$-Pfaffian.
    
    Now assume that $G$ is infinite. First note that interdefinability of types in $S(K)$ preserves $K$-definable Pfaffianess, and thus by Fact \ref{fact: kolchin}, we may assume that $p$ is the generic type of a full logarithmic differential equation on $\m G_a$, $\m G_m$, $\m G_a \rtimes \m G_m$ or $SL_2$ over $K$.

    Over $\m G_a$ and $\m G_m$, these are given by $y ' = b$ and $y' = by$, respectively, for some $b \in K$. These obviously have $K$-definably Pfaffian generic types.

    For the $\m G_a(\m C) \rtimes \m G_m(\m C)$ and $\mathrm{PSL}_2$ cases, the type $p$ must then be interdefinable with the generic type of a Ricatti equation $y' = ay^2 + by + c$ (see \cite[Proposition 6.5]{jaoui2022abelian}, for example), which again has Pfaffian generic solution.

\end{proof}

Before proving our main result, we need some observation about algebraic groups:

\begin{fact}\label{fact: subgroups of products}
    Let $H$ be an algebraic subgroup of a cartesian product of copies of $\mathbb{G}_a, \mathbb{G}_m, \m G_a \rtimes \m G_m$ or $\mathrm{PSL}_2$. Then $H$ has a subnormal series in which each quotient is finite or definably isomorphic to $\mathbb{G}_a, \mathbb{G}_m$ or $\mathrm{PSL}_2$.
\end{fact}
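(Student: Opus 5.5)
We argue by induction on $\dim H$ (plus the number of factors), always using a projection onto one factor. Write the ambient group as $G_1 \times \cdots \times G_m$, with each $G_i$ one of $\mathbb{G}_a, \mathbb{G}_m, \mathbb{G}_a \rtimes \mathbb{G}_m$ or $\mathrm{PSL}_2$. Let $\pi : G_1 \times \cdots \times G_m \to G_m$ be the last projection and $N = H \cap \ker \pi$. Then $N$ is a normal algebraic subgroup of $H$, and $N$ is an algebraic subgroup of $G_1 \times \cdots \times G_{m-1}$. Moreover $H/N \cong \pi(H)$, which is an algebraic subgroup of $G_m$.

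By induction, $N$ has a subnormal series of the required form. Subnormal series concatenate: if $N \trianglelefteq H$ and $H/N$ has a subnormal series with good quotients, then its preimage in $H$ followed by the series of $N$ is a subnormal series of $H$. The quotients are unchanged up to definable isomorphism, since the quotient maps and isomorphisms are algebraic, hence definable in $\mathbb{C}$. So everything reduces to the case of a single factor, i.e. of algebraic subgroups $L$ of $\mathbb{G}_a$, $\mathbb{G}_m$, $\mathbb{G}_a \rtimes \mathbb{G}_m$ and $\mathrm{PSL}_2$.

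For a single factor, first pass to the identity component: $L^0 \trianglelefteq L$ with $L/L^0$ finite, so it suffices to treat connected $L$.
\begin{itemize}
\item Connected subgroups of $\mathbb{G}_a$ and $\mathbb{G}_m$ are trivial or everything.
\item For $\mathbb{G}_a \rtimes \mathbb{G}_m$, use the normal subgroup $\mathbb{G}_a$. The group $L \cap \mathbb{G}_a$ is normal in $L$, and $L/(L\cap\mathbb{G}_a)$ embeds in $\mathbb{G}_m$. This already yields quotients that are finite or isomorphic to $\mathbb{G}_a$ or $\mathbb{G}_m$; in fact this argument handles $\mathbb{G}_a \rtimes \mathbb{G}_m$ itself.
\item For $\mathrm{PSL}_2$, a connected algebraic subgroup is trivial, the whole group, or solvable. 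In the solvable case it is conjugate into the Borel subgroup $\mathbb{G}_a \rtimes \mathbb{G}_m$, or is a torus $\mathbb{G}_m$, by the standard classification of connected subgroups of $\mathrm{SL}_2/\mathrm{PSL}_2$: dimension $1$ gives a torus or a unipotent subgroup, dimension $2$ gives a Borel. The solvable cases then reduce to the previous item.
\end{itemize}
Note that the quotient $H/N$ in the inductive step is itself such an $L$, so this single-factor analysis is exactly what is needed there.

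The main obstacle is only bookkeeping. Quotients must be taken as algebraic groups, so that the isomorphisms are definable, and the classification of subgroups of $\mathrm{PSL}_2$ must be invoked correctly, including the finite-index step. A subtlety is that the finite quotient $L/L^0$ should sit at the top of the series, and the series should be genuinely subnormal rather than normal in $H$. Both hold automatically with the construction above.
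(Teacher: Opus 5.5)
Your argument is correct, but it uses a different decomposition at the inductive step than the paper does.

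\textbf{The paper's route.} The paper first replaces the factors by copies of $\mathrm{SL}_2$. This step is somewhat loose, since $\mathrm{PSL}_2$ is a quotient of $\mathrm{SL}_2$, not a subgroup. It then applies Goursat's lemma to write $H$ as a fibre product of $\pi(H)\le G_1$ and $\rho(H)\le G_2\times\cdots\times G_n$ over a common quotient $K$, and inducts via the extension $1\to\ker\phi\times\ker\theta\to H\to K\to 1$. This obliges it to know that $K$, a \emph{quotient} of a subgroup of a single factor, has a series of the required form, which it asserts without proof. The single-factor case is likewise taken from the list of subgroups of $\mathrm{SL}_2$.

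\textbf{Your route.} You use the plain exact sequence $1\to H\cap\ker\pi\to H\to\pi(H)\to 1$. As a result, only \emph{subgroups} of a single factor ever occur as the top quotient, and no quotient groups need to be classified. You also prove the single-factor case explicitly:
\begin{itemize}
\item pass to $L^0$, which has finite index;
\item use the normal $\mathbb{G}_a$ in $\mathbb{G}_a\rtimes\mathbb{G}_m$;
\item use that proper connected subgroups of $\mathrm{PSL}_2$ are solvable, hence conjugate into a Borel subgroup isomorphic to $\mathbb{G}_a\rtimes\mathbb{G}_m$.
\end{itemize}

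\textbf{Comparison.} Your argument is more elementary and more self-contained: no Goursat's lemma, no handling of quotients, and no reduction to powers of $\mathrm{SL}_2$. Goursat's lemma gives a finer description of $H$ as a fibre product, but none of that extra information is needed for this statement.

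One small remark: the induction you actually carry out is on the number of factors alone, so the mention of $\dim H$ is superfluous.
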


\begin{proof}

    Note that $\mathbb{G}_a, \mathbb{G}_m, \m G_a \rtimes \m G_m$ and $\mathrm{PSL}_2$ correspond exactly, up to isomorphism, to the algebraic subgroups of $\mathrm{SL}_2$. Therefore, we may assume that $H$ is a subgroup of a cartesian power of $\mathrm{SL}_2$. 

    We induct on the number $n$ of factors. If $n = 1$, then $H$ is an algebraic subgroup of $\mathrm{SL}_2$, and the previous paragraph gives the result. 

    Now assume $n \geq 1$ and the result is correct for all $m < n$. Let $\pi$ be the projection on the first coordinate and $\rho$ the projection on the last $n-1$ coordinates. By the definable Goursat's lemma (see \cite[Lemma 2.9]{eagles2024splitting} for example), there are an algebraic group $K$ and surjective morphisms $\phi:  \pi(H) \rightarrow K$ and $\theta: \rho(H) \rightarrow K$ such that $H = \{ (g_1, g_2 \cdots , g_n) : \phi\circ \pi(g_1) = \theta \circ \rho(g_2, \cdots ,g_n)\}$. 

    In particular, the morphism $\phi \circ \pi : H \rightarrow K$ has kernel $\{ (g_1, g_2 ,\cdots , g_n) : \phi \circ \pi(g_1) = \theta \circ \rho(g_2,\cdots , g_n) = \id \}$, which is isomorphism to $\ker(\phi) \times \ker(\theta)$. By induction assumption, both $\ker(\phi)$ and $\ker(\theta)$ have subnormal series of the desired form. Since $K$ is a quotient group of a subgroup of $G$, it is also of the required form, and we obtain the result.
        
\end{proof}

\begin{rem}
    In \cite{singer1985solving}, Singer calls a group having such a subnormal series \emph{eulerian}. Indeed, the previous fact follows from Lemma 2.2 of that article, but we provided the proof for the comfort of the reader. See section \ref{dredsolve} for more on this.
\end{rem}

Finally we obtain:

\begin{thm}\label{theo: pfaff-binding-crit} Let $q \in S(K)$ be $\m C$-internal with binding group $G$, the following are equivalent:
\begin{enumerate}
    \item $q$ is Pfaffian,
    \item $G$ has a subnormal series in which each quotient is finite or definably isomorphic to $\m G_a (\mathbb{C}), \, \m G_m (\mathbb{C})$ or $\mathrm{PSL}_2(\mathbb{C})$,
\end{enumerate}

\end{thm}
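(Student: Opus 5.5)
First reduce both conditions to the case where $q$ is fundamental, weakly $\mathbb{C}$-orthogonal, with $K$ algebraically closed. Lemma \ref{lem: over-C-pfaff} replaces $q$ by $\tp(a/f(a)K)$, which is weakly $\mathbb{C}$-orthogonal, without changing Pfaffianness. The binding group also does not change, since adjoining constants does not alter $\mathrm{Aut}(\mathcal{U}/K\mathbb{C})$. Lemma \ref{lem: morl-prod-pfaff} then passes to a fundamental system of solutions, again without changing the binding group. To reach $K=K^{\mathrm{alg}}$, I would use the finite-group case of Lemma \ref{lem: bin-grp-implies-pfaff} to see that algebraic elements are Pfaffian. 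Then Lemma \ref{lem: pfaff-pres-fib} shows Pfaffianness is insensitive to a finite extension of the base. On the group side, $G$ changes only by a finite-index subgroup, and condition (2) is stable under this by adding a finite quotient to the series. After this, Fact \ref{fact: kolchin} identifies $q$ with the generic type of a full logarithmic differential equation on a connected algebraic group $G$ over $K$.

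For (2)$\Rightarrow$(1), induct on the length of the subnormal series, using the Galois correspondence of Fact \ref{fact: galois}. It is cleanest to first refine the series into a normal series: work within $G^0$, take a composition-type series there, and use connectedness so that each step can be realized by a normal subgroup of the current group. Given $H \trianglelefteq G$ with $G/H$ of the allowed form, Fact \ref{fact: galois} produces $e \in \dcl(bK)$. Then $\tp(e/K)$ has binding group $G/H$, and $\tp(b/eK)$ has binding group $H$; both are fundamental and weakly $\mathbb{C}$-orthogonal. By Lemma \ref{lem: bin-grp-implies-pfaff}, the type $\tp(e/K)$ is Pfaffian, after passing to algebraic closures as above. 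By induction, $\tp(b/eK)$ is Pfaffian over $K\langle e\rangle$. Lemma \ref{lem: pfaff-pres-fib} then glues these to conclude that $q$ is Pfaffian.

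For (1)$\Rightarrow$(2), suppose $b\models q$ lies in $K\langle a_1,\dots,a_N\rangle$ for a Pfaffian chain. Induct on $N$, via the tower $K_i=K\langle a_1,\dots,a_i\rangle$. Each $a_{i+1}$ satisfies an order-one equation $y'=P_{i+1}(a_1,\dots,a_i,y)$ over $K_i$. The key step is to show that $\mathrm{Aut}(b/K_i)$ modulo $\mathrm{Aut}(b/K_{i+1})$, namely the image in $G$ of the binding group of $b$ restricted to this step, is a subquotient of a product of copies of $\mathbb{G}_a$, $\mathbb{G}_m$, $\mathbb{G}_a\rtimes\mathbb{G}_m$, $\mathrm{PSL}_2$ or a finite group. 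Concretely, consider the $\mathbb{C}$-internal part of $\tp(a_{i+1}/K_i)$, which comes from an order-one type. It is either algebraic, or $\mathbb{C}$-internal of dimension one, or orthogonal to $\mathbb{C}$. An order-one $\mathbb{C}$-internal type has binding group a one-dimensional transitive algebraic group action on a curve. Such groups are exactly $\mathbb{G}_a$, $\mathbb{G}_m$, $\mathbb{G}_a\rtimes\mathbb{G}_m$, $\mathrm{PSL}_2$, up to finite extensions or elliptic curves; I must rule out the elliptic curve case. This is where the polynomial, rather than rational, nature of the chain matters. The equation $y'=P(y)$ with $P$ polynomial has a pole at $\infty$, which should exclude the elliptic case, since elliptic curves carry no invariant vector field with a singularity. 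Orthogonal steps contribute nothing to $G$, because $b$'s $\mathbb{C}$-internal data is independent from them.

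Combining these steps, $G$ becomes a subquotient of such a product, hence an algebraic subgroup of a quotient of one. Fact \ref{fact: subgroups of products}, together with stability of the class under quotients, then gives (2). I expect the main obstacle to be this step: making the transfer of binding groups along the tower precise (passing through $\mathbb{C}$-reductions of $\tp(a_{i+1}/K_i)$ and using functoriality), and excluding elliptic curves for polynomial $P$.
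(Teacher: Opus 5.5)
Your direction (2)$\Rightarrow$(1) is essentially the paper's. You split off $G/H$ with Fact~\ref{fact: galois}, apply Lemma~\ref{lem: bin-grp-implies-pfaff} to the quotient after passing to an algebraic closure, recurse on $\tp(b/eK)$ with binding group $H$, and glue with Lemma~\ref{lem: pfaff-pres-fib}. No refinement to a normal series is needed, since each stage only uses the first term $G_1\trianglelefteq G$ of the subnormal series.

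The genuine gap is in (1)$\Rightarrow$(2), at exactly the step you flag as the main obstacle.

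\textbf{Why the tower fails.} The tower $K_i=K\langle a_1,\dots,a_i\rangle$ fixes individual elements. So the subgroups $G_i\le G$ induced by $\mathrm{Aut}(\mathcal U/K_i\mathbb C)$ are in general not normal in one another, and ``$\mathrm{Aut}(b/K_i)$ modulo $\mathrm{Aut}(b/K_{i+1})$'' is not a group. For example, take a Riccati equation over $K$ with binding group $\mathrm{PSL}_2(\mathbb C)$, and let $q$ be the type of three independent solutions $y_1,y_2,y_3$, which form a Pfaffian chain. Your tower then gives $\mathrm{PSL}_2>B>T>1$, with $B$ a Borel subgroup, which is not normal. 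So nothing you have written makes $G$ a subquotient of a product of the allowed groups.

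\textbf{What the paper does instead.} The missing idea is to fix $K$-invariant families rather than elements. Let $\pi$ truncate the chains. Then $\mathrm{Aut}_{\mathbb C,\pi(p)}(q)$, which fixes all constants and all realizations of $\pi(p)$, is a normal $K$-definable subgroup of $G$. By Fact~\ref{fact: galois}, the quotient is the binding group of a type lying in the definable closure of realizations of $\pi(p)$ together with constants. This is why the paper's induction hypothesis allows several chains of the same type and arbitrary constants; yours would need the same strengthening. The normal piece is then controlled through the $\mathbb C$-reductions of the order-one fibres and DCC. Modulo a finite normal subgroup, it is an image of a definable subgroup of a finite product of binding groups of order-one $\mathbb C$-internal types on rational curves.

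Your tower might be rescued, but it would need two things your proposal does not contain. First, a group-theoretic lemma: a chain of closed subgroups whose successive homogeneous spaces are finite or rational curves yields an allowed series, proved by passing to the kernel of the action on $G/G_1$ and inducting. Second, a definable identification of each $G_i/G_{i+1}$ with such a homogeneous space, which again goes through the $\mathbb C$-reduction of $\tp(a_{i+1}/K_i)$.

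\textbf{Two smaller points.}

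Excluding elliptic curves has nothing to do with $P$ being a polynomial. Your ``pole at $\infty$'' heuristic is not even right: for a Riccati equation, $P(y)\,\partial_y$ is regular and non-vanishing at $\infty$. The real point is that the $\mathbb C$-reduction of the generic type of a $D$-variety on $\mathbb A^1$ lives on a curve dominated by $\mathbb A^1$, hence of genus zero. This is why the paper notes that the same proof works for rationally Pfaffian chains.

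In your reduction, passing to $\tp(a/f(a)K)$ via Lemma~\ref{lem: over-C-pfaff} can change the binding group. $\mathrm{Aut}(\mathcal U/K\mathbb C)$ is unchanged, but it now acts on a single fibre, and the restriction can have a kernel. Over $K=\mathbb Q(t)^{\mathrm{alg}}$, take a generic constant $c$ and $y'=1/t+c/(t-1)$. The type of $(c,y)$ is $\mathbb C$-internal with binding group $\mathbb G_a^2$, while the fibre type has binding group $\mathbb G_a$. Condition (2) still transfers, for three reasons:
\begin{itemize}
\item the fibre group is a quotient of $G$;
\item by DCC on the kernels, $G$ embeds in a finite product of conjugates of the fibre group;
\item the class in (2) is closed under subgroups, quotients and extensions.
\end{itemize}
But this has to be argued; the claim that the binding group is unchanged is false.
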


\sloppy\begin{proof}

To prove the equivalence (1) $\Leftrightarrow$ (2), we can always, by taking products, assume that $q$ is weakly $\mathbb{C}$-orthogonal and fundamental, by Lemma \ref{lem: morl-prod-pfaff} and the discussion following it. Moreover, by \cite[Lemma 2.1]{jaoui2022abelian}, the unique extension $\overline{q}$ to $K^{\mathrm{alg}}$ has binding group a finite index normal subgroup of $\bg{K}{q}$. Therefore, we may assume that $K$ is algebraically closed.

\medskip

We now prove (1) implies (2) by induction. Our induction hypothesis is that if some (any) realization of $b \models q$ is contained in $\dcl(\overline{a}_1, \cdots , \overline{a}_m, K , \m C)$, where the $\overline{a}_i$ are length $l$ Pfaffian chains over $K$, with $\overline{a}_i \equiv_K \overline{a}_j$ for all $i,j$, then the binding group of $q$ over $\mathbb{C}$ is of the desired form. We induct on $l$. 

\medskip

For $l = 1$, we have that $b \in \dcl(a_1, \cdots , a_m, K , \mathbb{C})$, where there is $P \in K[X]$ such that for each $a_i$ we have $a_i' = P(a_i)$. Let $r = \tp(a_i/K)$ for any $a_i$ and $p = \tp(a_1, \cdots , a_m/K)$. 

There is a finite tuple $\overline{c}$ of constants such that $b \in \dcl(a_1, \cdots , a_m, K, \overline{c})$. As $q$ is weakly orthogonal to $\mathbb{C}$,  if $d$ is any other realization of $q$, there is an automorphism $\sigma$ fixing $K \cup \m C$ pointwise with $d= \sigma(b)$, so we see that $d \in \dcl(\sigma(a_1), \cdots, \sigma(a_m), K, \overline{c})$. Therefore we obtain a $K(\overline{c})$-definable map $f : p \rightarrow q$. We can replace $q$ by its unique non-forking extension to $K(\overline{c})^{\mathrm{alg}}$ and $p$ by some extension to $K(\overline{c})^{\mathrm{alg}}$ mapping to $q$, without changing our assumptions on $q$. Therefore we now assume that $f$ is $K$-definable.

The type $r$ is strongly minimal, and therefore is either orthogonal to $\mathbb{C}$, or almost internal to $\mathbb{C}$. In the first case, the map $f$ goes from a type orthogonal to the constants to a type internal to the constants. This implies that $q$ is an algebraic type, and thus must have finite binding group. 

In the almost internal case, consider the $\mathbb{C}$-reductions $\rho : r \rightarrow \rho(r)$ and $\mu : p \rightarrow \mu(p)$. The type $\tp(\rho(a_1), \cdots, \rho(a_m)/K)$ is $\mathbb{C}$-internal. Moreover, the type $\tp(\rho(a_i)/K)$, for some (any) $a_i$, is an order one type, and the image of the generic type of a $D$-variety on $\mathbb{A}^1$. Therefore it is itself interdefinable with the generic type of a $D$-variety on $\mathbb{A}^1$, and its binding group must be definably isomorphic to the $\mathbb{C}$-points of $\m G_a, \m G_m$, $\m G_a \rtimes \m G_m$ or $\mathrm{PSL}_2$ (the case of an elliptic curve is ruled out by a genus argument, see the proof of Theorem A on page 23 of \cite{jin2020internality}). The binding group of $\tp(\rho(a_1), \cdots, \rho(a_m)/K)$ is thus a definable subgroup of a cartesian power of one of these groups. As the $\mathbb{C}$-reduction is the largest internal image of a type, there is a $K$-definable map from $\mu(p)$ to $\tp(\rho(a_1), \cdots, \rho(a_m)/K)$, which must be finite-to-one as these types have the same $U$-rank. Therefore $\bg{K}{\mu(p)}$ is a finite extension of the binding group of $\tp(\rho(a_1), \cdots, \rho(a_m)/K)$. Finally, the map $f$, again by properties of the $\mathbb{C}$-reduction, must factor through $\mu(p)$. This induces a $K$-definable surjective map of binding groups $\bg{K}{\mu(p)} \rightarrow \bg{K}{q}$. By Fact \ref{fact: subgroups of products}, this concludes the proof of the base case.

\medskip

We now consider the inductive step. So assume that some $b \models q$ is in $\dcl(\overline{a}_1, \cdots , \overline{a}_m,K , \mathbb{C})$, where each $\overline{a}_i = a_{i,l}, \cdots , a_{i,1}$ is a $K$-Pfaffian chain of length $l$. Again, let $r= \tp(\overline{a}_i/K)$ and $p = \tp(\overline{a}_1, \cdots , \overline{a}_m/K)$. We can once more, by extending $K$ by a finite tuple of constants, consider $f : p \rightarrow q$ an induced $K$-definable map. We let $\pi : r \rightarrow \pi(r)$ be the projection on the first $l-1$ coordinates, and also denote $\pi$ the projection induced on $p$.

The type $q$ is $\mathbb{C}$-internal, therefore it has a $K$-definable binding group $\bg{K}{q}$. Its normal subgroup $\bg[\mathbb{C}, \pi(p)]{K}{q}$ is a $K$-definable subgroup, as it is a binding group with respect to the family $\{ \mathbb{C} , \pi(p) \}$. By Fact \ref{fact: galois}, there is a $K$-definable map $g : q \rightarrow g(q)$ such that $\bg[\mathbb{C}, \pi(p)]{K}{q}$ is definably isomorphic to $\bg{g(b)K}{\tp(b/g(b)K/\mathbb{C})}$ and $\bg{K}{g(q)}$ is definably isomorphic to $\bg{K}{q}/\bg[\mathbb{C}, \pi(p)]{K}{q}$.

Again by Fact \ref{fact: galois}, we know that $\dcl(g(b)K)$ consists of all elements of $\dcl(bK)$ fixed by $\bg[\mathbb{C}, \pi(p)]{K}{q}$. By stable embeddedness of $\{ \m C , \pi(p)\}$, any tuple fixed by $\bg[\m C, \pi(p)]{K}{q}$ must be in $\dcl(\pi(p), \m C, K)$, and thus $\dcl(g(b)K) = \dcl(bK) \cap \dcl(\pi(p), \m C , K)$. In particular, we see that $g(b)$ is in the definable closure, over $K \cup \m C$\footnote{This is why we need arbitrary constants in our induction hypothesis.}, of realizations of $\pi(p)$, which are $K$-Pfaffian chains of length $l-1$. By induction hypothesis, it is enough to show that the binding group $\bg[\m C, \pi(p)]{K}{q}$ is of the required form.

For all $i$, the type $\tp(\overline{a}_i/\pi(\overline{a}_i)K)$ is of order one, so strongly minimal and either orthogonal to the constants or almost internal to the constants. In the first case, we get that $b \in \acl(\pi(\overline{a}_1, \cdots , \overline{a}_m)K)$ and so $q$ is algebraic over $\mathbb{C} \cup \pi(p)$. Thus the binding group $\bg[\m C, \pi(p)]{K}{q}$ is profinite, and by $\omega$-stability must be finite. 

Now assume that for all $i$ the type $\tp(\overline{a}_i/ \pi(\overline{a}_i)K)$ is almost $\m C$-internal. Let $\mu$ be its $\m C$-reduction, which is finite-to-one as $\tp(\overline{a}_i/ \pi(\overline{a}_i)K)$ is almost $\m C$-internal. We can assume that $\pi(\overline{a}_i) \in \dcl(\mu(\overline{a}_i)K)$ for all $i$. Since the $\overline{a}_i$ have the same type over $K$, it induces a $K$-definable map on $p$, which we also denote $\mu$. This new $\mu$ induces a map $\tilde{\mu}$ from $\bg[\m C , \pi(p)]{K}{p}$ to $\bg[\mathbb{C},\pi(p)]{K}{\mu(p)} $, and its kernel, which is $\bg[\m C, \mu(p)]{K}{p}$, is profinite (the map $\tilde{\mu}$ is not definable because $\bg[\m C , \pi(p)]{K}{p}$ need not be definable, as $p$ is not $\mathbb{C}$-internal). Similarly, the map $f$ induces a map $\tilde{f} : \bg[\m C , \pi(p)]{K}{p} \rightarrow \bg[\m C, \pi(p)]{K}{q}$. This gives a commutative diagram, where the horizontal sequences are exact:

    \begin{center}
    \begin{tikzcd}[cramped, sep = small]
        1 \arrow[r] & \bg[\m C, \mu(p)]{K}{p} \arrow[r] \arrow [d] &\bg[\mathbb{C},\pi(p)]{K}{p} \arrow[r, "\tilde{\mu}"] \arrow[d, "\tilde{f}"] & \bg[\mathbb{C},\pi(p)]{K}{\mu(p)}  \arrow[r] \arrow [d] & 1 \\
        1 \arrow[r] & \bg[\m C, \mu(p)]{K}{q} \arrow[r] &\bg[\mathbb{C},\pi(p)]{K}{q} \arrow[r] & \bg[\mathbb{C},\pi(p)]{K}{q}/ \bg[\m C, \mu(p)]{K}{q}  \arrow[r] & 1
    \end{tikzcd}
    \end{center}
Since $\bg[\m C , \mu(p)]{K}{q}$ is profinite and definable, it must be finite by $\omega$-stability. Therefore it is enough to show that $\bg[\mathbb{C},\pi(p)]{K}{q}/ \bg[\m C, \mu(p)]{K}{q}$ is of the form we want. 

The group $\bg[\m C , \mu(p)]{K}{q}$ is the intersection $\bigcap\limits_{\pi(\overline{a}) \models \pi(p)} H_{\pi(\overline{a})}$, where $H_{\pi(\overline{a})}$ is the subgroup of elements of $\bg[\mathbb{C},\pi(p)]{K}{q}$ extending to an automorphism of $\mathcal{U}$ fixing $\tp(\mu(\overline{a})/\pi(\overline{a})K)$ pointwise. Note that $\tp(\mu(\overline{a})/\pi(\overline{a})K)$ is $\m C$-internal, and therefore fixing it is equivalent to fixing one of its fundamental system. Thus, each $H_{\pi(\overline{a})}$ is definable over $K$ and a fundamental system of $\tp(\mu(\overline{a})/\pi(\overline{a})K)$. By the descending chain condition, there are finitely many $\pi(\overline{a}_1), \cdots, \pi(\overline{a}_n)$ such that $\bg[\m C , \mu(p)]{K}{q} = \bigcap\limits_{i=1}^n H_{\pi(\overline{a}_i)}$. 

In particular, this means that the image of some element of $\bg[\m C, \pi(p)]{K}{\mu(p)}$ is entirely determined by its action on $\tp(\mu(\overline{a}_i)/ \pi(\overline{a}_i)K)$ for $i = 1 , \cdots , n$. In other words, the rightmost vertical map factors through the $K$-definable binding group $\bg[\m C, \pi(p)]{\pi(\overline{a}_1, \cdots , \overline{a}_n)K}{\tp(\mu(\overline{a}_1, \cdots , \overline{a}_n)/\pi(\overline{a}_1, \cdots , \overline{a}_n)K)}$, which is a definable subgroup of $\prod\limits_{i=1}^{n} \bg[\m C, \pi(p)]{\pi(\overline{a}_i)K}{\tp(\mu(\overline{a}_i)/ \pi(\overline{a}_i)K)}$. We now conclude as was done for the $l = 1$ case.

The implication (2) $\Rightarrow$ (1) is a consequence of the Galois correspondence Fact \ref{fact: galois}, combined with Lemma \ref{lem: bin-grp-implies-pfaff} and Corollary \ref{cor: semi-min-pres-pfaff}.

\end{proof}

Note that the same proof works replacing Pfaffian with rationally Pfaffian: we could replace all Pfaffian chains with rational Pfaffian chains, and use the same dimension and genus argument to constrain the binding groups. Thus we have obtained:

\begin{prop}
     A $\mathbb{C}$-internal type $q \in S(K)$ is Pfaffian if and only if it is rationally Pfaffian.
\end{prop}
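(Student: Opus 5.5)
One direction is formal. A $K$-definable Pfaffian chain is in particular a $K$-definable rational Pfaffian chain, since polynomials are rational functions. So a Pfaffian type is rationally Pfaffian, and it remains to show that a $\mathbb{C}$-internal rationally Pfaffian type $q$ is Pfaffian.

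For the converse, I would show that $q$ satisfies condition (2) of Theorem \ref{theo: pfaff-binding-crit}: its binding group $G$ has a subnormal series whose quotients are finite or definably isomorphic to $\m G_a(\mathbb{C})$, $\m G_m(\mathbb{C})$ or $\mathrm{PSL}_2(\mathbb{C})$. The implication (2) $\Rightarrow$ (1) of that theorem then gives that $q$ is Pfaffian.

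To obtain (2), I would rerun the proof of (1) $\Rightarrow$ (2) verbatim, with rational Pfaffian chains in place of polynomial ones.

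\emph{Reductions.} As before, I pass to a weakly $\mathbb{C}$-orthogonal, fundamental type with $K$ algebraically closed. This uses Lemma \ref{lem: morl-prod-pfaff}, Lemma \ref{lem: over-C-pfaff} and \cite[Lemma 2.1]{jaoui2022abelian}. Lemmas \ref{lem: pfaff-field}, \ref{lem: pfaff-pres-fib} and \ref{lem: morl-prod-pfaff} hold for rational chains with the same proofs, because rational functions are closed under the field operations and differentiation.

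\emph{Induction.} I induct on the length $l$ of rational chains $\overline{a}_1,\dots,\overline{a}_m$ with $b\in\dcl(\overline{a}_1,\dots,\overline{a}_m,K,\mathbb{C})$.
\begin{itemize}
\item In the base case each $a_i$ satisfies $a_i'=R(a_i)$ with $R\in K(X)$. This is again an order one equation on $\mathbb{A}^1$, i.e.\ a $D$-variety structure on the rational curve $\mathbb{P}^1$, whose generic type is strongly minimal.
\item If that type is orthogonal to $\mathbb{C}$, the binding group is finite.
\item Otherwise its $\mathbb{C}$-reduction is the generic type of a $D$-variety on a curve dominated by $\mathbb{P}^1$, so that curve has genus $0$. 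The genus argument of \cite{jin2020internality} excludes elliptic curves, so the binding group is one of $\m G_a,\m G_m,\m G_a\rtimes\m G_m,\mathrm{PSL}_2$.
\item Fact \ref{fact: subgroups of products} together with the factorisation through $\mathbb{C}$-reductions gives (2).
\item The inductive step uses only the fibres $\tp(\overline{a}_i/\pi(\overline{a}_i)K)$. These are again order one equations $y'=R(\pi(\overline{a}_i),y)$ on $\mathbb{A}^1$, with $R$ rational, so the same dichotomy and genus argument apply. The Galois correspondence, stable embeddedness and descending chain condition arguments do not involve polynomiality at all.
\end{itemize}

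\emph{Expected obstacle.} The step to check carefully is the base-case assertion that a rational order one equation on $\mathbb{A}^1$ that is almost $\mathbb{C}$-internal has binding group among the four listed groups. For polynomial right-hand sides this was used implicitly; for rational ones I would verify that the proof in \cite{jin2020internality} uses only the fact that the underlying curve is rational, and not that the vector field is regular on $\mathbb{A}^1$. The only input to the dimension and genus argument is that the curve is $\mathbb{A}^1$, so I expect this to go through unchanged.
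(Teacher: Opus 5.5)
Your proposal is correct and is essentially the paper's own argument. The paper likewise notes that the proof of (1)$\Rightarrow$(2) in Theorem \ref{theo: pfaff-binding-crit} goes through verbatim with rational Pfaffian chains, using the same dimension and genus argument to constrain the binding groups, and then (2)$\Rightarrow$(1) returns a polynomial chain. The step you flag as the expected obstacle is indeed harmless for the reason you give: the paper's $D$-varieties allow rational sections, and the genus argument only uses that the underlying curve is rational.
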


Theorem \ref{theo: pfaff-binding-crit}, alongside Proposition \ref{pro: seidenberg-Pfaffian}, gives us a necessary and sufficient condition for algebraic differential equations to have meromorphic Pfaffian solutions:

\begin{thm}
    Consider an equation 
    \[P(y^{(n)},y^{(n-1)}, \cdots , y',y) = 0 \tag{$E$}\]
    for some irreducible $P \in K[x_n, \cdots , x_0]$, with $K<\mathbb{C}(t)$. Let $p \in S_1(K)$ be its unique generic type in $\mathrm{DCF}_0$. If $p$ is $\mathbb{C}$-internal, then $(E)$ has a meromorphic, generic $\mathbb{C}$-Pfaffian solution on some domain $U \subset \mathbb{C}$ if and only if the binding group of $p$ has a subnormal series in which each quotient is finite or definably isomorphic to $\m G_a (\mathbb{C}), \, \m G_m (\mathbb{C})$ or $\mathrm{PSL}_2(\mathbb{C})$.
\end{thm}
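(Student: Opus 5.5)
This theorem follows by combining Theorem \ref{theo: pfaff-binding-crit} with Proposition \ref{pro: seidenberg-Pfaffian}. The plan is to show that ``$(E)$ has a meromorphic generic $\mathbb{C}$-Pfaffian solution on some domain'' is equivalent to ``$p$ is Pfaffian'' in the definable sense. Theorem \ref{theo: pfaff-binding-crit} then converts this into the group-theoretic condition on the binding group of $p$.

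The first step is a reduction of the base field. Since $(E)$ involves only finitely many coefficients from $K<\mathbb{C}(t)$, we may assume $K\subseteq F(t)$ with $F<\mathbb{C}$ finitely generated over $\mathbb{Q}$. Let $p'$ be the generic type of $(E)$ over $F(t)$, namely the non-forking extension of $p$ to $F(t)$. By the standard fact that the binding group of an internal type changes only by a finite-index normal subgroup under extension of the base (\cite[Lemma 2.1]{jaoui2022abelian}, as in the proof of Theorem \ref{theo: pfaff-binding-crit}), the binding groups of $p$ and $p'$ either both admit such a subnormal series or both fail to. Here I note that an extension by finite groups preserves the required form, and that a finite-index subgroup of a group with such a series inherits one by intersecting the series. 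Alternatively, apply Theorem \ref{theo: pfaff-binding-crit} directly to $p$: Pfaffianness of $p$ over $K$ is equivalent to Pfaffianness over $F(t)$, because a $K$-definable chain is $F(t)$-definable and, conversely, Pfaffianness passes to the nonforking extension. I would use the first route, where only the group is compared.

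For the direction from Pfaffianness to the binding group, take a generic meromorphic $\mathbb{C}$-Pfaffian solution $f$ on $U$. Genericity means $f$ satisfies no lower-order equation, so in $\mathrm{Mer}(U)$, embedded in a differentially closed field, $f$ realizes the generic type $p'$ over $F(t)$. Proposition \ref{pro: seidenberg-Pfaffian} then shows $p'$ is Pfaffian, after shrinking $U$ so that $f$ is holomorphic. Theorem \ref{theo: pfaff-binding-crit} applied to the $\mathbb{C}$-internal type $p'$ gives the subnormal series.

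For the converse, Theorem \ref{theo: pfaff-binding-crit} makes $p'$ Pfaffian. Proposition \ref{pro: seidenberg-Pfaffian}, via Seidenberg's embedding, then produces a domain $U$ and a holomorphic $\mathbb{C}$-Pfaffian $f\models p'$. Since $f$ realizes the generic type, it solves $(E)$ and satisfies no lower-order equation, so it is the desired meromorphic generic solution.

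The main obstacle is the bookkeeping of parameters. One issue is that Proposition \ref{pro: seidenberg-Pfaffian} is stated over $F(t)$ with $F$ finitely generated, whereas $K$ is an arbitrary subfield of $\mathbb{C}(t)$. The other is checking that $\mathbb{C}$-internality and the group condition transfer between $p$ and $p'$; this is the finite-index argument described above.
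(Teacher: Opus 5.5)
Your strategy is the paper's own: the paper obtains the statement simply by combining Theorem \ref{theo: pfaff-binding-crit} with Proposition \ref{pro: seidenberg-Pfaffian}. The problem is the parameter bookkeeping you add, which as written does not go through.

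\textbf{The group comparison is not justified.} You transfer the group condition between $p$ over $K$ and $p'$ over $F(t)$ by a finite-index argument citing \cite[Lemma 2.1]{jaoui2022abelian}. In the paper that lemma is used only for the algebraic extension $K\subseteq K^{\mathrm{alg}}$. Passing to $F(t)$ adjoins $t$ and transcendental constants, and then the binding group can drop by an infinite normal subgroup. For instance, the generic type of $y'=1$ over $\mathbb{Q}$ has binding group $\mathbb{G}_a(\mathbb{C})$. Its nonforking extension to $\mathbb{Q}(t)$ is realized by $t+c$, with $c$ a transcendental constant, and has trivial binding group. The binding group of $p'$ embeds into that of $p$, so closure under subgroups gives the direction from $p$ to $p'$. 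But the direction from $p'$ back to $p$, which your forward implication needs, is not supplied.

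\textbf{Genericity does not transfer upward.} The same example breaks your claim that a generic solution $f$ realizes $p'$: the function $t$ solves $y'=1$ and is generic over $\mathbb{Q}$, but not over $\mathbb{Q}(t)$. Also, ``we may assume $K\subseteq F(t)$'' is not an available reduction when $K$ is not finitely generated, since the binding group in the statement is taken over $K$.

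\textbf{How to repair it.} Drop $p'$ from the group side entirely and apply Theorem \ref{theo: pfaff-binding-crit} to $p$ over $K$ itself, doing all bookkeeping on the Pfaffian side. What your ``alternative'' route lacks is descent; both halves of your justification there go from $K$ up to $F(t)$. The missing observation is that $t$ and every constant are $\mathbb{Q}$-definably Pfaffian, via $t'=1$ and $c'=0$. So by Lemma \ref{lem: pfaff-field}, every element of $\mathbb{C}(t)$ lies in a $\mathbb{Q}$-definable Pfaffian chain.

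For the forward direction, take a solution $f$ with its $\mathbb{C}$-Pfaffian chain, whose polynomials have complex coefficients. Send $f$ into $\mathcal{U}$ by a $K$-embedding and prepend the chain above. This gives a $K$-definable chain for the image of $f$. Since $f$ is generic over $K$, its image realizes $p$, so $p$ is Pfaffian and Theorem \ref{theo: pfaff-binding-crit} gives the series.

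For the converse, if the series exists then $p$ is $K$-definably Pfaffian. Conjugating over $K$, you get a $K$-definable, hence $F(t)$-definable, chain for a realization of the nonforking extension $p'$. This needs $K\subseteq F(t)$, the setting in which Proposition \ref{pro: seidenberg-Pfaffian} is stated. That proposition then yields $f\models p'$, which solves $(E)$ and is in particular generic over $K$.
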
  

As an application, we point out:

\begin{exam}
    The generic solutions of the equation $y'''-ty = 0$, defined over $\mathbb{Q}(t)$, are not pfaffian. Indeed, work of Singer in Section 5 of \cite{singer1985solving} shows that the binding group of this equation is not eulerian.
\end{exam}

We also obtain the following, which has been claimed by Macintyre in \cite{macintyre2008some} but not proven:

\begin{cor}\label{cor: weierstrass not Pfaffian}
    Let $g_2, g_3 \in \mathbb{C}$ with $27g_3^2-g_2^3 \neq 0$. The Weierstrass elliptic function $\wp_{g_2,g_3}$ satisfying the differential equation $(y')^2 = 4y^3 -g_2 y - g_3$ is not $\mathbb{C}$-Pfaffian, on any open set $U \subset \mathbb{C}$ on which it is defined.
\end{cor}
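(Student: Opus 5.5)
We reduce Corollary \ref{cor: weierstrass not Pfaffian} to Theorem \ref{theo: pfaff-binding-crit}, or rather to its analytic version above, via Proposition \ref{pro: seidenberg-Pfaffian}.

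Suppose for contradiction that $\wp = \wp_{g_2,g_3}$ is $\mathbb{C}$-Pfaffian on some open $U$. We may shrink $U$ to a disc avoiding the poles. Then $\wp$ is a holomorphic element of $\mathrm{Mer}(U)$, and it is non-constant, so it is not algebraic over $\mathbb{C}(t)$. Let $F<\mathbb{C}$ be the finitely generated field $\mathbb{Q}(g_2,g_3)$, together with the finitely many constants appearing in the polynomials of a Pfaffian chain for $\wp$. Let $p=\tp(\wp/F(t))$. This is the generic type of the order one equation $(y')^2 = 4y^3-g_2y-g_3$, and it is realized by a $\mathbb{C}$-Pfaffian function. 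By Proposition \ref{pro: seidenberg-Pfaffian}, $p$ is Pfaffian, and by the moreover part of Lemma \ref{lem: pfaff-pres-fib}, or simply by adding parameters, so is its extension to any larger base. The extension in question is the unique non-forking one to $K=F(t)^{\mathrm{alg}}$ together with any needed constants.

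Next we identify $p$ and its binding group. Since $27g_3^2-g_2^3\neq 0$, the cubic defines an elliptic curve $E$ over $\mathbb{C}$. The equation is autonomous with constant coefficients, so $p$ is the generic type of the $D$-variety $(E,s)$, where $s$ is the constant vector field dual to the invariant differential $dx/y$. This type is $\mathbb{C}$-internal: any two generic solutions differ by a translate, $\wp(t+c)$, so each solution lies in $\dcl$ of one fixed solution together with constants. Concretely, through the group law on $E$, a solution $a$ is $x(P)$ for a point $P \in E$ with $P-P_0\in E(\mathbb{C})$. The binding group of $p$, or of the associated fundamental type $\tp(P/K)$ on $E$, is $E(\mathbb{C})$, acting by translation. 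Indeed, this is the logarithmic-differential equation $\mathrm{dlog}_E(P)=1$ in the sense of Fact \ref{fact: kolchin}. The binding group is all of $E(\mathbb{C})$ and not a proper subgroup, because it is a connected definable subgroup of the one-dimensional $E(\mathbb{C})$ and the type is not algebraic. The type $p$ itself is the image of this type under the two-to-one map $x$. By Lemma \ref{lem: morl-prod-pfaff} and the reductions at the start of the proof of Theorem \ref{theo: pfaff-binding-crit}, Pfaffianness of $p$ is equivalent to Pfaffianness of $\tp(P/K)$, which has binding group $E(\mathbb{C})$ up to finite index.

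Theorem \ref{theo: pfaff-binding-crit} now says that $E(\mathbb{C})$ must have a subnormal series whose quotients are finite or definably isomorphic to $\mathbb{G}_a(\mathbb{C})$, $\mathbb{G}_m(\mathbb{C})$ or $\mathrm{PSL}_2(\mathbb{C})$. Since $E$ is a connected one-dimensional group, every definable subgroup is finite or of finite index, and $E(\mathbb{C})$ has no infinite proper definable subgroups. Any such series therefore has exactly one infinite quotient, which is a quotient of $E(\mathbb{C})$ by a finite subgroup. That quotient is again an elliptic curve, and it would have to be definably isomorphic to $\mathbb{G}_a$ or $\mathbb{G}_m$, by dimension. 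This is impossible: a definable isomorphism in $\mathrm{DCF}_0$ between groups of constants is an algebraic group isomorphism over $\mathbb{C}$, and $E$ is complete while $\mathbb{G}_a$ and $\mathbb{G}_m$ are affine. Equivalently, the genus is an invariant of the isomorphism.

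The main obstacle is the middle step: verifying carefully that $p$ is $\mathbb{C}$-internal and that the binding group is $E(\mathbb{C})$ up to finite quotients and extensions, while keeping track of the base change from $F(t)$ to $K$. Here one has to use that passing to $K^{\mathrm{alg}}$ changes the binding group only by a finite index subgroup (\cite[Lemma 2.1]{jaoui2022abelian}), and that the two-to-one map $P\mapsto x(P)$ does not alter the connected component of the binding group. A finite extension or quotient of $E(\mathbb{C})$ still has an elliptic curve as its unique infinite subquotient, so the argument in the previous paragraph goes through.
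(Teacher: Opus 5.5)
Your strategy is the paper's. You transfer to a definably Pfaffian type via Proposition \ref{pro: seidenberg-Pfaffian}, recognize the type as $\mathbb{C}$-internal with binding group an elliptic curve, and apply Theorem \ref{theo: pfaff-binding-crit}. Your explicit check that $E(\mathbb{C})$ has no subnormal series of the required kind is a welcome detail the paper leaves implicit: such a series has a single infinite quotient, itself an elliptic curve, which is neither affine nor $3$-dimensional.

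\textbf{The gap.} The step you flag yourself is not correctly justified. That the binding group is all of $E(\mathbb{C})$ does not follow from its being a subgroup of $E(\mathbb{C})$ together with non-algebraicity of the type. A non-algebraic $\mathbb{C}$-internal type can have trivial binding group, e.g.\ the type over $K$ of a constant transcendental over $K$. What you need is that the binding group acts transitively on the realizations, i.e.\ that $\tp(\wp/K)$ is weakly $\mathbb{C}$-orthogonal. This is exactly the point the paper's proof verifies.

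Since $\wp$ has $U$-rank one over $K$, weak orthogonality amounts to $\wp\notin\acl(K\cup\mathbb{C})$, and for $K=F(t)^{\mathrm{alg}}$ this set is $\mathbb{C}(t)^{\mathrm{alg}}$. Your first paragraph asserts this, but derives it from non-constancy, which is a non sequitur: $t$ itself is non-constant. The claim is true but needs a real argument. For instance, a polynomial relation over $\mathbb{C}(t)$ satisfied by $\wp$ on the disc persists under analytic continuation. A function algebraic over $\mathbb{C}(t)$ has only finitely many poles, whereas $\wp$ has a pole at every lattice point.

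The paper sidesteps this by working over the constant field $\mathbb{Q}(g_2,g_3)$. There $\acl(K\cup\mathbb{C})=\mathbb{C}$, so forking with the constants would force $\wp$ to be constant. Your base contains $t$ and the chain's coefficients, as it must for Proposition \ref{pro: seidenberg-Pfaffian} to apply, so in your version this transcendence input is genuinely needed.

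\textbf{A smaller correction.} The $x$-coordinate map is not two-to-one on $D$-points. Since $P=(\wp,\wp')$ and $\wp'\in\dcl(\wp K)$, the types $\tp(\wp/K)$ and $\tp(P/K)$ are interdefinable and have the same binding group. So no finite-index bookkeeping is needed at that point.

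Once weak orthogonality is in place, $\tp(P/K)$ is fundamental and weakly $\mathbb{C}$-orthogonal. The map $\sigma\mapsto\sigma(P)-P$ identifies its binding group with an infinite definable subgroup of $E(\mathbb{C})$, hence with all of $E(\mathbb{C})$. The rest of your argument then goes through.
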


\begin{proof}
    By \cite[Section 9]{Marker96modeltheory}, for any solution $w$ of $(y')^2 = 4y^3 -g_2 y - g_3$ and any $\mathbb{Q}(g_2,g_3)<F< \mathcal{U}$, the type $\tp(w/F)$ is $\mathbb{C}$-internal, and if it is weakly $\mathbb{C}$-orthogonal, then its binding group is an elliptic curve. In that second case, it is not definably Pfaffian by Theorem \ref{theo: pfaff-binding-crit}. 

    Now consider $\wp_{g_2,g_3}$ as a meromorphic function on some open $U \subset \mathbb{C}$, and embed the field $\mathbb{Q}(g_2,g_3, \wp_{g_2,g_3})$ into $\mathcal{U}$. Let $p = \tp(\wp_{g_2,g_3}/\mathbb{Q}(g_2,g_3))$. By Proposition \ref{pro: seidenberg-Pfaffian}, it is enough to show that $p$ is not definably Pfaffian, and by the previous discussion, we just need to show it is weakly $\mathbb{C}$-orthogonal. But if it was not, then $\wp_{g_2,g_3}$ would be a constant function as the type $p$ has $U$ rank one. 

\end{proof}

Note that this does not preclude the Weierstrass function from having Pfaffian-like behavior. For example, Jones and Schmidt prove in \cite{jones2021Pfaffian} that $\wp$ is piecewise semi-Pfaffian on its fundamental domain minus zero, from which they deduce some bound on the number of zeroes of functions of the form $P(z,\wp(z))$, where $P$ is a complex polynomial.

\subsection{Order one equations on curves of genus greater than one} \label{orderone}
In \cite{freitag2021not} it is shown that a type $p$ which is strongly minimal and of order at least $2$ is not Pfaffian. Because the arguments are only based on the fact that the order of the type and its Morley rank differ, they transparently generalize to show that such a type is not rationally Pfaffian either. Our result from the last section characterizes when $\m C$-analyzable types have a $\m C$-Pfaffian realization. Therefore, the only semiminimal types left to consider are order one types. A Galois-theoretic obstruction was already given: if the type is $\mathbb{C}$-internal, its binding group cannot be an elliptic curve, ruling out Weierstrass elliptic functions. But there are many order one equations that are not internal, or even almost internal, to the constants, and thus for which binding groups arguments are useless. See the work of Rosenlicht \cite{notmin} and Hrushovski-Itai \cite{hrushovski2003model}, for example.

We now prove that many such equations do not have (rationally) Pfaffian solutions either. Our argument will rely on genus of algebraic curves.

\begin{prop}\label{prop: ortho implies not Pfaffian} Suppose that $p \in S(K)$ is the complete type of a generic solution of an order one equation over some algebraically closed differential field $K$. If $p$ is orthogonal to the generic type of any equation of the form $x' = f(x)$ for $f$ a rational function over some differential field $F$ containing $K$, then $p$ is not rationally Pfaffian.
\end{prop}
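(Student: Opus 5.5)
We argue by contradiction. Suppose $a \models p$ is $K$-definably rationally Pfaffian. Then there is a $K$-definable rational Pfaffian chain $(a_1, \cdots, a_N)$ with $a_i' = R_i(a_1, \cdots, a_i)$, $R_i \in K(y_1, \cdots, y_i)$, and $a \in K\langle a_1, \cdots, a_N\rangle$. In fact $a \in K(a_1, \cdots , a_N)$, since the chain's field is closed under the derivation by the analogue of Lemma \ref{lem: pfaff-field}. The plan is to find a step in the chain whose type is non-orthogonal to $p$. Such a step is the generic type of an equation $x' = f(x)$ with $f$ rational over some $F \supseteq K$, which contradicts the hypothesis.

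For $0 \le i \le N$, set $F_i = K\langle a_1, \cdots, a_i\rangle = K(a_1,\cdots,a_i)$. Each extension $F_i \subseteq F_{i+1}$ is generated by the single element $a_{i+1}$, with $a_{i+1}' = R_{i+1}(a_1, \cdots, a_{i+1})$. So either $a_{i+1}$ is algebraic over $F_i$, or $\tp(a_{i+1}/F_i)$ is the generic type of the order one equation $x' = f_i(x)$, where $f_i(x) = R_{i+1}(a_1,\cdots,a_i,x) \in F_i(x)$. Since $p$ has order one and $a \in F_N$ is non-algebraic over $K$, we have $U(p) = 1$. Let $j$ be the least index with $a \in \acl(F_j)$; then $j \ge 1$.

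The key step is to show that $p$ is non-orthogonal to $\tp(a_j/F_{j-1}^{\mathrm{alg}})$. First, $p$ is stationary because $K$ is algebraically closed, and $a \ind_K F_{j-1}$ by minimality of $j$ and $U(p)=1$. So $\tp(a/F_{j-1}^{\mathrm{alg}})$ is the non-forking extension $p\vert_{F_{j-1}^{\mathrm{alg}}}$. Over the base $F_{j-1}^{\mathrm{alg}}$, the element $a$ is algebraic over $a_j$ but not algebraic over the base, and $U(a_j/F_{j-1}^{\mathrm{alg}}) \le 1$. Hence $a_j \notin F_{j-1}^{\mathrm{alg}}$, and $a$ and $a_j$ are interalgebraic over $F_{j-1}^{\mathrm{alg}}$. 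Therefore $a \nind_{F_{j-1}^{\mathrm{alg}}} a_j$, which witnesses non-orthogonality of $p$ and $\tp(a_j/F_{j-1}^{\mathrm{alg}})$.

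Finally, $\tp(a_j/F_{j-1}^{\mathrm{alg}})$ is the generic type of $x' = f_{j-1}(x)$ over $F = F_{j-1}^{\mathrm{alg}}$, a differential field containing $K$, with $f_{j-1}$ rational over $F$. This contradicts the assumed orthogonality. The main point needing care is whether "generic type of $x'=f(x)$" matches the type of $a_j$ over $F$. Because $a_j$ is transcendental over $F$, it realizes the generic type of that equation over $F$, so the two agree, and stationarity over the algebraically closed base makes non-orthogonality unambiguous. The remaining bookkeeping is that orthogonality is checked over extensions $F \supseteq K$, which is exactly how the hypothesis is phrased.
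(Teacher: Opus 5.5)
Your proof is correct and follows essentially the same argument as the paper. The paper takes a rational Pfaffian chain of minimal length over which $a$ is algebraic, which amounts to your truncation at the least index $j$; then, as you do, it uses $U(p)=1$ to get independence of $a$ from the earlier links, so that $p$ is non-orthogonal to the last link, which is the generic type of some $x'=f(x)$. Your version is slightly more careful, since you work over $F_{j-1}^{\mathrm{alg}}$ for stationarity and derive non-algebraicity of $a_j$ from interalgebraicity, but the route is the same.
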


\begin{proof}
Suppose that there is some $a \in \mc U$ which is $K$-definably Pfaffian and $a \models p$. Suppose that $(b_n, b_{n-1}, \ldots , b_1) $ is a definable rational Pfaffian chain over which $a$ is algebraic, of minimal length. Then $a \nind_{b_{n-1}, \ldots , b_1, K} b_n$. By minimality of the chain, we see that $a \ind_K b_1 \cdots b_{n-1}$, therefore $p$ is nonorthogonal to $\tp(b_n / b_{n-1}, \ldots , b_1 ,K)$. Moreover $b_n$ is not algebraic over $b_1, \cdots , b_{n-1},K$ by minimality of $n$, so $\tp(b_n / b_{n-1}, \ldots , b_1, K)$ is the generic type of an order one equation of the form $x' = f(x)$, where the coefficients of $f$ are in the differential field generated by $(b_{n-1}, \ldots , b_1)$ over $K$. 
\end{proof}

Note that in particular, if $K = \mathbb{C}$, this tells us that  $p$ has no holomorphic rationally Pfaffian solution, on any domain of $\mathbb{C}$. Remark also that an easy modification of this Proposition gives a necessary condition for the generic type of an order one equation to be pfaffian.

We now explain how this problem is equivalent to characterizing $\sim$-essential KS-forms in the sense of \cite{dupuy2023order} or \cite{rosen2005order} on curves of genus larger than zero. This approach to understanding order one differential equations has its origins in the work of \cite{hrushovski2003model}. To set up the statements of the results, we recall some notation from \cite{dupuy2023order}, whose results will be used to give an obstruction to order one differential equations having a $\m C$-Pfaffian solution. 

Recall that, as explained in Subsection \ref{subsec: D-var-prel}, there is a natural correspondence (even an equivalence of categories) between order one differential equations and pairs $(C,s)$ in which $s$ is a rational section of the tangent bundle of $C$. There is also a correspondence between sections $s$ and rational differential forms $\omega \in \Omega (C)$ on $C$ (given by setting $\omega (s) = 1$). For an exposition of this and more, see \cite{hrushovski2003model}. Note in particular, and this is important below, that the 1-forms and rational section perspectives agree on the definition of morphisms. Recall that a $D$-morphism from $(C_1,s_1)$ to $(C_2,s_2)$, where $C_1,C_2$ are defined over $\mathbb{C}$, is a rational map $\phi$ making the following diagram commute:
\[\begin{tikzcd}
        T C_1 \arrow[r, "d \phi"] & T C_2  \\
        C_1 \arrow[u,"s_1"] \arrow[r,"\phi"]& C_2 \arrow[u, "s_2"]
\end{tikzcd}\]

On the rational form side, we say that a rational 1-form $\omega_1$ on $C_1$ is the pullback of a rational 1-form $\omega_2$ on $C_2$ by $\phi$ if $\omega_1 = \phi^*(\omega_2) = \omega_2 \circ d\phi$. If $\omega_1$ and $\omega_2$ are the rational 1-forms associated to $s_1$ and $s_2$ by $\omega_i \circ s_i = 1$, one can check that $w_1 = \phi^*(\omega_2)$ if and only if $\phi$ is a $D$-morphism from $(C_1,s_1)$ to $(C_2,s_2)$.

In \cite{hrushovski2003model}, Hrushovski and Itai study \emph{essential 1-forms}:

\begin{defn}
    A rational 1-form $\omega$ on a curve $C$ is essential if there is no rational map $\phi : C \rightarrow C'$ and rational 1-form $\omega'$ on $C'$ such that $\omega = \phi^*(\omega')$.
\end{defn}

Note in particular that by the Riemann-Hurwitz formula, a non-essential rational 1-form $(C,\omega)$, if $C$ has non-zero genus, must be the pullback of a rational 1-form on a curve of strictly lower genus. 

When the curve $C$ is defined over a non-constant differential field $K$, then for $u \in C( \mc U)$,  $(u,\delta u)$ no longer lies in the tangent space $T_u (C)$, but in a torsor, called the prolongation and denoted by $\tau C$. The naturally associated bundle of forms, $\Omega ^\tau _{K(C)/K}$ is called the space of KS-forms on $C$. The previous paragraphs generalize to that context. For notation and results around $\tau$-forms which we use in the following results, see \cite{dupuy2023order}. 

We use this machinery to obtain the following:

\begin{thm}\label{theo: pfaff-order-one-crit}
    Let $p \in S(K)$ be the generic type of an order one equation, with $K$ an algebraically closed differential field. Then $p$ is rationally Pfaffian if and only if it is orthogonal to the generic types of all new $KS$-forms defined over any $K<F$.
\end{thm}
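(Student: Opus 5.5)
The plan is to translate both conditions of the statement into the geometry of order one $D$-varieties on curves, in the spirit of \cite{hrushovski2003model} and \cite{dupuy2023order}. Write $p$ as the generic type of a $D$-variety $(C,s)$ over $K$, with associated KS-form $\omega \in \Omega^\tau_{K(C)/K}$. A step $x' = f(x)$ with $f$ rational in a rational Pfaffian chain is exactly a KS-form on $\mathbb{P}^1$. I will take ``new'' in the sense of \cite{dupuy2023order}: a KS-form is new if it is not $\sim$-equivalent to a pullback of a KS-form on $\mathbb{P}^1$. The proof will use the following facts from that paper:
\begin{itemize}
\item $\sim$ is the equivalence relation generated by pullbacks along dominant rational maps of curves, that is, by $D$-morphisms;
\item newness is invariant under $\sim$;
\item for generic types of order one $D$-curves, nonorthogonality over some $F$ amounts to the existence of a common $D$-cover over $F^{\mathrm{alg}}$, so it coincides with $\sim$ after base change.
\end{itemize}

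For the direction ``rationally Pfaffian $\Rightarrow$ orthogonal to new forms'', I argue by contrapositive. Suppose $p$ is nonorthogonal to the generic type of some new KS-form $(C',\omega')$ over some $F \supseteq K$. Assume also that $a \models p$ is rationally Pfaffian. Repeat the argument of Proposition \ref{prop: ortho implies not Pfaffian}: choose a rational chain $(b_n,\dots,b_1)$ of minimal length with $a$ algebraic over it. Then $p$ is nonorthogonal to $\tp(b_n/b_{n-1},\dots,b_1,K)$, which is the generic type of some $x'=f(x)$ over $F_0 = K\langle b_1,\dots,b_{n-1}\rangle$. Since all these types are minimal, nonorthogonality is transitive. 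So over a common extension $F_1 \supseteq F, F_0$, the form $\omega'$ and the form $dx/f(x)$ on $\mathbb{P}^1$ become nonorthogonal. Interalgebraicity of generic points then yields a curve $D$ with dominant $D$-morphisms to $C'$ and to $\mathbb{P}^1$. Hence $\omega' \sim$ a pullback from $\mathbb{P}^1$, which contradicts newness. The point that needs care is that newness is preserved under base change to $F_1^{\mathrm{alg}}$; I expect this to be part of the definition in \cite{dupuy2023order}, since new forms are defined ``over any $F$''.

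For the converse, suppose $p$ is orthogonal to all new forms. Then $\omega$ itself is not new, since otherwise $p$ would be nonorthogonal to itself. So over $K$, which is algebraically closed, $\omega$ is $\sim$-equivalent to the pullback of a form $dx/f(x)$ on $\mathbb{P}^1$ with $f \in K(x)$. The main obstacle is to turn this $\sim$-equivalence over $K$ into an honest chain over $K$, without passing to an extension $F$ that might not itself be Pfaffian. I will first try to use that $\sim$ is witnessed by a zigzag of $D$-morphisms of curves over $K$. Composing the correspondence, I aim to get $b = \phi(a')$ with $b' = f(b)$, where $a'$ is interalgebraic with $a$ over $K$. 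This gives $a \in K\langle b\rangle^{\mathrm{alg}} = K(b)^{\mathrm{alg}}$, because $b$ has order one.

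Finally, take the minimal polynomial $P \in K(b)[y]$ of $a$ and differentiate, exactly as in the finite case of Lemma \ref{lem: bin-grp-implies-pfaff}. This gives
\[a' = -\frac{P^{\delta}(a) + \frac{\partial P}{\partial b}(a)\, f(b)}{\frac{\partial P}{\partial y}(a)},\]
which is rational in $(b,a)$ over $K$. Therefore $(b,a)$ is a $K$-definable rational Pfaffian chain, and $p$ is rationally Pfaffian. If the zigzag only yields a genus-zero curve that is not $\mathbb{P}^1$ over $K$, then Lüroth and $K=K^{\mathrm{alg}}$ fix this.
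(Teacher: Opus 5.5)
Your left-to-right direction is the paper's argument (Proposition \ref{prop: ortho implies not Pfaffian} unfolded, plus transitivity of nonorthogonality for minimal types). Your last step, differentiating the minimal polynomial of $a$ over $K(b)$ to get the rational chain $(b,a)$, is correct. The gap is in between. You flag two points: that newness survives base change (used in the first direction), and that $\sim$ is witnessed by a zigzag of $D$-morphisms over $K$ (used in the second). These pull in opposite directions, and you give no argument for the second. If newness is stable under base change, then ``$\omega$ is not new'' only says that $p$ is nonorthogonal to some genus-zero $D$-curve over some extension $F$. Algebraic closedness of $K$ does nothing to bring that correspondence down to $K$. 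So the claim ``over $K$, $\omega$ is $\sim$-equivalent to a pullback of $dx/f(x)$ with $f\in K(x)$'' is precisely what has to be proved.

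Without a further hypothesis it is false. Let $K=\mathbb{Q}(g_2,g_3)^{\mathrm{alg}}$ and let $p$ be the generic type of $(y')^2=4y^3-g_2y-g_3$, i.e.\ of $(E,dx/y)$.
\begin{itemize}
\item The difference of two $D$-points of $(E,dx/y)$ lies in $E(\mathbb{C})$. So $p$ is nonorthogonal to the constants, hence to the generic type of $y'=y$ (the form $dy/y$ on $\mathbb{P}^1$). Thus $dx/y$ is not new in your sense, and by transitivity $p$ is orthogonal to every new form.
\item No common $D$-cover over $K$ exists. For a finite $\psi:D\to E$ over $K$, the induced form $\psi^*(dx/y)$ is regular. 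For $\phi:D\to\mathbb{P}^1$ and a rational form $\omega_1$ on $\mathbb{P}^1$, $\omega_1$ has a pole of some order $m\geq 1$ at some $P$. Then $\phi^*\omega_1$ has order $e(1-m)-1<0$ at each point above $P$ with ramification index $e$.
\item $p$ is not rationally Pfaffian: its binding group is $E(\mathbb{C})$ (Corollary \ref{cor: weierstrass not Pfaffian} and the rational version of Theorem \ref{theo: pfaff-binding-crit}).
\end{itemize}
Since $y'=y$ is rationally Pfaffian and nonorthogonal to this $p$, while ``orthogonal to all new forms'' is invariant under nonorthogonality of minimal types, no reading of ``new'' makes the equivalence true for every order one $p$. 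You need $p\perp\mathbb{C}$, which is the only case used later (the locally modular $r$ in Theorem \ref{theo: rat-pfaff-semi-min}), and your descent step must actually use it.

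For comparison, the paper's right-to-left argument never leaves $K$. Non-essentiality gives a pullback along a map $C\to C_1$ defined over $K$, and this is iterated with Riemann--Hurwitz down to genus zero. That moves the base-change issue into the other direction: new forms must be orthogonal to genus-zero $D$-curves over every extension, as in the strict-disintegration argument after Theorem \ref{theo: pfaff-order-one-crit}. The paper's genus-lowering step has the same blind spot in genus one, since $dx/y=[n]^*(\tfrac1n\,dx/y)$ with $C_1=E$, so it too needs $p\perp\mathbb{C}$.
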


\begin{proof}
For right to left, assume that $p$ is orthogonal to all generic types of new $KS$-forms defined over extensions of $K$. Because it is an order one type, it must be interalgebraic with the generic type of a $D$-variety on an algebraic curve, which we may take to be of genus zero - to see this, assume that $p$ is a the generic type of a $D$-variety $(C,s)$ for some rational section $s$. By hypothesis, the associated $\tau$-form $\omega$ is not essential, so there is some $f:C \rightarrow C_1$ and a $\tau$-form $\omega_1$ on $C_1$ such that $\omega$ is the pullback of $\omega_1$ under $f$ and $f$ is a map of degree strictly larger than one. The generic type $p_1$ of the associated $D$-variety $(C_1, s_1)$ is interalgebraic over $K$ with $p$, and $C_1$ has genus less than that of $C$. Moreover transitivity of non-orthogonality implies that $p_1$ also is orthogonal to the generic type of all new KS-forms, and we can repeat the argument. So, without loss of generality, we may assume that the genus of $C$ is zero. We may further assume, applying a birational transformation, that this curve is $\mathbb{A}^1$. Therefore it is interdefinable with the generic type of an equation of the form $y' = f(y)$ for some $f \in K(X)$, and thus rationally Pfaffian. 

For left to right, assume on the contrary that $p$ is non-orthogonal to one of these types, say $q \in S(F)$. As non-orthogonality is an equivalence relation on minimal types, this means that $p$ must be orthogonal to all generic types of $D$-varieties on $\mathbb{A}^1$. By Proposition \ref{prop: ortho implies not Pfaffian}, we deduce that $p$ is not rationally Pfaffian. 
    
\end{proof}

We also have a version of this for autonomous equations and essential forms, although it is only a necessary condition:

\begin{prop}
    Let $p \in S(K)$ be the generic type of an order one equation, where $K$ is a field of constants. If $p \not \perp q$ for some $q \in S(K)$ generic type of a global essential 1-form on a curve of genus strictly greater than $1$, then $p$ is not rationally Pfaffian. 
\end{prop}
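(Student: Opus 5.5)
The plan is to reduce to Proposition \ref{prop: ortho implies not Pfaffian}: I will show that $p$ is orthogonal to the generic type of every equation $x'=f(x)$ with $f$ rational over some differential field $F \supseteq K$. Suppose, for contradiction, that $p \not\perp r$ for such an $r \in S(F)$. Minimal types satisfy transitivity of non-orthogonality, so $q \not\perp r$ as well. Here $q$ is the generic type of $(C,\omega)$ with $C$ of genus $g>1$ defined over the constant field $K$ and $\omega$ essential. I first want to upgrade this non-orthogonality to interalgebraicity over a larger base $L \supseteq F$. That is, I aim to find $a \models q|_L$ and $b \models r|_L$ with $\acl(aL)=\acl(bL)$.

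Geometrically, the interalgebraicity gives a curve $Z \subset C \times \mathbb{A}^1$ defined over $L$ with dominant projections to both factors. Over $L$, $Z$ carries a $D$-variety structure (a KS-form). Both projections are then $D$-morphisms onto $(C,s_\omega)$ and onto $(\mathbb{A}^1, s_f)$, where $s_f$ is the section attached to $x'=f(x)$.

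The key step is to exploit essentiality together with the genus. An essential form on a curve of genus $>1$ yields a trivial, indeed strictly disintegrated, minimal type (Hrushovski--Itai). Its generic type is orthogonal to the constants, and its non-orthogonality class is governed by $D$-correspondences. I would argue as follows. The correspondence $Z$ shows that the KS-form on $Z$ is a pullback of $\omega$ (transported to $L$) and also a pullback of the $\tau$-form on $\mathbb{A}^1$. Using Fact \ref{fact: tot-dis-dcl}-type reasoning, I would show that the map $Z \to C$ can be taken birational. If $q$ stays strictly disintegrated after base change to $L$, then $a \in \dcl(bL)$, so $a \in L\langle b\rangle = L(b)$. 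This yields a dominant rational map $\mathbb{A}^1 \to C$ over $L$. By Lüroth's theorem (or Riemann--Hurwitz) that is impossible, since $C$ has genus $>1$ and hence cannot be dominated by a rational curve. This contradiction gives the desired orthogonality, and Proposition \ref{prop: ortho implies not Pfaffian} finishes the proof.

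The main obstacle is the base change. Essentiality and strict disintegration are hypotheses over the constant field $K$, whereas $r$ lives over an arbitrary differential field $F$. I need $q$, an autonomous type over constants, to remain strictly disintegrated (or at least to keep $\omega$ essential among KS-forms) over non-constant extensions $L$. My first attempt would use the fact that a type over $K$ orthogonal to the constants, when $K$ is a constant field, has non-orthogonality witnessed over $K^{\mathrm{alg}}$ itself. This follows from the standard result that non-orthogonal trivial minimal types over constant parameters are non-weakly-orthogonal over the constants, by stable embeddedness of $\mathbb{C}$. If $r$ descends too far for this to work, the fallback is purely geometric. A direct Riemann--Hurwitz comparison would show that any curve $Z$ dominating both $C$ and $\mathbb{A}^1$ compatibly with the forms forces $\omega$ to be the pullback of a form on a curve of lower genus. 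That contradicts essentiality, and it does so regardless of the base.
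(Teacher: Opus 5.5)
Your proposal is the paper's proof. Transitivity of non-orthogonality reduces to Proposition \ref{prop: ortho implies not Pfaffian}. Strict disintegration of $(C,\omega)^\sharp$ (Hrushovski--Itai) together with Fact \ref{fact: tot-dis-dcl} turns the correspondence into a dominant rational map $\mathbb{A}^1 \to C$, and genus $\geq 2$ rules this out. A small slip: it is $Z \to \mathbb{A}^1$, not $Z \to C$, that becomes birational.

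The base-change point you flag as the main obstacle is handled in the paper simply by the standard fact that non-forking extensions of strictly disintegrated types remain strictly disintegrated. To see it, suppose $a,b \models q|_L$ with $b \in \acl(aL)$. Then $b \in \acl(a,e,K)$ for $e = \mathrm{Cb}(\stp(ab/L))$. Moreover $e$ lies in the definable closure of a Morley sequence of $\stp(ab/L)$ chosen independent from $ab$ over $L$, so strict disintegration of $q$ over $K$ forces $b=a$. Neither of your fallbacks is needed, and neither works as written. The first concerns types over constant parameters, whereas $r$ lives over a non-constant $F$. The second would require essentiality of $\omega$ over $L$ and a common quotient curve, which a correspondence need not provide.
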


\begin{proof}
Let $q \in S(K)$ be the generic solution of the equation $(C, 
\omega)^\sharp$, an essential $1$-form on a curve of genus $g \geq 1$ such that $p \not \perp q$. It is enough, by transitivity of nonorthogonality and Proposition \ref{prop: ortho implies not Pfaffian}, to show that for any differential field extension $F$ of $K$ and any $r \in S(F)$ that is the generic type of a $D$-variety $X$ on a genus zero curve, we have that $q$ is orthogonal to $r$. Fix such an $r$. 

By \cite[Proposition 2.1]{hrushovski2003model}, the strongly minimal set $(C, 
\omega)^\sharp$ is strictly disintegrated. Therefore the type $q$ is strictly disintegrated, and its non-forking extension $q \vert_F$ also is. Thus, any generically finite-to-finite $F$-definable correspondence between $r$ and $q\vert_F$ is given by a generically finite-to-one $F$-definable function $f: X \rightarrow (C, \omega)^\sharp$, by Fact \ref{fact: tot-dis-dcl}. By quantifier elimination, $f$ is induced by a rational function on the Zariski-closures $\bar X \rightarrow C.$ But this is impossible as the genus of $\bar X$ is zero and the genus of $C$ is at least two.  

\end{proof}

\begin{rem}
    By \cite[Lemma 2.13]{hrushovski2003model}, such forms exist, and we thus obtain the existence of order one differential equations over $K < \mathbb{C}$ with no generic rationally Pfaffian solutions.
\end{rem}

\subsection{Rationally Pfaffian}
Putting all of our results together, we obtain a necessary and sufficient condition for a type to be rationally Pfaffian:

\begin{thm}\label{theo: rat-pfaff-semi-min}
    Let $K$ be an algebraically closed differential field. A finite rank type $p \in S(K)$ is rationally Pfaffian if and only if it has a semi-minimal analysis in which each step is either:
    \begin{itemize}
        \item algebraic,
        \item internal to $\mathbb{C}$ and with binding group having a subnormal series in which each quotient if finite or definable isomorphic to the $\mathbb{C}$-points of $\mathbb{G}_a, \mathbb{G}_m$ or $\mathrm{PSL}_2$,
        \item interalgebraic with the Morley power $r^{(n)}$, for some $r$ of order one, orthogonal to the generic type of all new KS-forms defined over any field containing the parameters of $r$.
    \end{itemize}
\end{thm}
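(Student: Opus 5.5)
The plan is to derive both directions from the rationally Pfaffian version of Corollary~\ref{cor: semi-min-pres-pfaff}: $p$ is rationally Pfaffian if and only if each step of a semi-minimal analysis is. The proofs of Lemmas~\ref{lem: pfaff-field} and~\ref{lem: pfaff-pres-fib} carry over verbatim to rational chains, since rational functions are closed under the field operations and under derivation. The only point to check is inverses: $(1/b)' = -b'/b^2$ is rational in the chain, so nothing new is needed. With this in hand, the theorem reduces to classifying which semi-minimal steps $\tp(a_i/f(a_i)K)$ are rationally Pfaffian, working over the algebraically closed base $\acl(f(a_i)K)$. Passing to the algebraic closure costs only an algebraic step, and algebraic types are rationally Pfaffian by the argument of the finite case of Lemma~\ref{lem: bin-grp-implies-pfaff}.

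For the right-to-left direction, I will treat each kind of step in turn.
\begin{itemize}
\item Algebraic steps are handled as just described.
\item $\mathbb{C}$-internal steps with eulerian binding group are Pfaffian by Theorem~\ref{theo: pfaff-binding-crit}, hence rationally Pfaffian.
\item Steps interalgebraic with $r^{(n)}$, where $r$ is orthogonal to all new KS-forms: $r$ is rationally Pfaffian by Theorem~\ref{theo: pfaff-order-one-crit}. Then $r^{(n)}$ is rationally Pfaffian by Lemma~\ref{lem: morl-prod-pfaff}, and the step itself is rationally Pfaffian because it lies in the algebraic closure of a realization of $r^{(n)}$, and algebraic extensions preserve rational Pfaffianness as above.
\end{itemize}
Composing these steps through the rational version of Lemma~\ref{lem: pfaff-pres-fib} gives that $p$ is rationally Pfaffian.

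For the left-to-right direction, suppose $p$ is rationally Pfaffian. Take any semi-minimal analysis of $p$; each step is then rationally Pfaffian. Each step $s$ is almost internal to some minimal type $r$. If $r$ is non-orthogonal to $\mathbb{C}$, then $s$ is almost $\mathbb{C}$-internal. After quotienting by a finite-to-one map (its $\mathbb{C}$-reduction), which I insert as an extra algebraic step, $s$ becomes $\mathbb{C}$-internal. Theorem~\ref{theo: pfaff-binding-crit}, in its rational form, then forces the binding group to be eulerian. If $r$ is orthogonal to $\mathbb{C}$, then in $\mathrm{DCF}_0$ it is either of order one or strongly minimal of order at least $2$ (trivial, or locally modular of Manin kernel type). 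The order $\geq 2$ case should be excluded because such types are not rationally Pfaffian, by the rank-versus-order argument of \cite{freitag2021not} noted at the start of Subsection~\ref{orderone}. To make this step valid, I will need that a rationally Pfaffian $s$ cannot be non-orthogonal to such an $r$. I would prove this as in Proposition~\ref{prop: ortho implies not Pfaffian}: take a minimal-length chain over which a realization is algebraic, and observe that its last step is of the form $x' = f(x)$, hence has order one.

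In the remaining case, $r$ has order one, and I want to show that $s$ is interalgebraic with a Morley power of some order one type. For this I will use that a type which is almost internal to a trivial or disintegrated minimal type $r$ is interalgebraic with a Morley power of (conjugates of) $r$. If the conjugates are pairwise non-orthogonal I can take them all to be the same $r$ after refining the analysis; otherwise I would further refine the analysis so that each step involves a single non-orthogonality class. Finally, $r$ is rationally Pfaffian, since non-orthogonality to a rationally Pfaffian $s$ gives this by the chain argument above. Theorem~\ref{theo: pfaff-order-one-crit} then yields the required orthogonality to all new KS-forms.

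The main obstacle will be these last two steps. One is that the existential ``some analysis'' must be produced by refining an arbitrary analysis, including the interalgebraicity with $r^{(n)}$ for order one $r$ orthogonal to $\mathbb{C}$, where I expect to use triviality of such $r$. The other is the transfer of rational Pfaffianness from $s$ to the non-orthogonal minimal type $r$, which is needed both to apply Theorem~\ref{theo: pfaff-order-one-crit} and to rule out higher-order minimal types.
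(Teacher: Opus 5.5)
Your right-to-left direction and your treatment of steps almost internal to a minimal type non-orthogonal to $\mathbb{C}$ are fine; in the latter you insert the finite-to-one $\mathbb{C}$-reduction and then apply Theorem~\ref{theo: pfaff-binding-crit}. The gap is the one you flag yourself: you never actually produce an analysis whose remaining steps are interalgebraic with a Morley power $r^{(n)}$. The principle you invoke (almost internal to a trivial minimal type implies interalgebraic with a Morley power of conjugates) is only asserted. Likewise, ``refine so that each step involves a single non-orthogonality class'' is not a construction.

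The paper gets this step from a different idea. It refines the analysis so that every non-algebraic step has \emph{no proper fibrations*}. Then \cite[Proposition 2.3]{moosa2014some} says each such step is either $\mathbb{C}$-internal or interalgebraic with $r^{(n)}$ for a locally modular minimal $r$, which is exactly the shape of the third bullet.

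To complete your route instead, you need two facts you do not state.
(i) An order-one minimal type orthogonal to $\mathbb{C}$ is trivial. This is known, via the trichotomy in $\mathrm{DCF}_0$ and the fact that Manin kernels have order at least $2$, but it must be cited.
(ii) Over an algebraically closed $M$, if $\tp(a/M)$ is non-orthogonal to a modular (e.g.\ trivial) minimal type $r$ over $M$, then some realization of $r$ lies in $\acl(aM)$. This is the standard fact for modular minimal types.
Granting (ii), take $r$ over $L=\acl(L)$ and a maximal $L$-independent tuple $d$ of realizations of $r$ inside $\acl(aL)$. If $a\notin\acl(dL)$, then $\tp(a/\acl(dL))$ is non-algebraic and almost $r$-internal, and (ii) enlarges $d$. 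So $a$ is interalgebraic with $d\models r^{(n)}$, and no conjugates are needed.

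Your transfer of rational Pfaffianness to $r$ is also misjustified. Non-orthogonality to a rationally Pfaffian type does not make a minimal type rationally Pfaffian. For example, $\tp(\wp_{g_2,g_3}/\mathbb{Q}(g_2,g_3))$ is non-orthogonal to the type of a transcendental constant, yet it is not rationally Pfaffian, since its binding group is an elliptic curve (Corollary~\ref{cor: weierstrass not Pfaffian} and the rational version of Theorem~\ref{theo: pfaff-binding-crit}). What you need is interalgebraicity over a Pfaffian base. Once $s$ is interalgebraic with $r^{(n)}$, Pfaffianness passes to $r^{(n)}$, because an element algebraic over a Pfaffian field is Pfaffian (as in the finite case of Lemma~\ref{lem: bin-grp-implies-pfaff}). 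It then passes to $r$ by Lemma~\ref{lem: morl-prod-pfaff}; this is how the paper's route gets it for free.

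Finally, excluding order $\geq 2$ does not work verbatim as in Proposition~\ref{prop: ortho implies not Pfaffian}. Since $s$ is only semi-minimal, minimality of the chain no longer gives $a\ind_L b_1\cdots b_{k-1}$. Argue instead as follows:
\begin{enumerate}
\item $\tp(a/L\langle b_1,\dots,b_{k-1}\rangle)$ is a non-algebraic extension of an almost $r$-internal type, and it forks with $b_k$.
\item Almost $r$-internal types are orthogonal to every minimal type orthogonal to $r$, so $r$ is non-orthogonal to the order-one generic type of the last link.
\item Non-orthogonal minimal types in $\mathrm{DCF}_0$ have the same order, so $r$ has order one.
\end{enumerate}
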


\begin{proof}
    To prove this, we use the notion of type with no proper fibrations, introduced by Moosa and Pillay in \cite{moosa2014some}. A type $p \in S(K)$ has no proper fibration if for any $a \models p$ and any $b$, if $b \in K\langle a \rangle \setminus K^{\mathrm{alg}}$, then $a \in K\langle b \rangle^{\mathrm{alg}}$. Equivalently, there is no $K$-definable function $f : p \rightarrow q$ with both infinite image and infinite fibers. 

    An induction shows that any finite $U$-rank type has a semi-minimal analysis $p_i, f_i,i = 1, \cdots ,n$ such that $p_0$, and each fiber of an $f_i$, either has no proper fibration or is algebraic. By \cite[Proposition 2.3]{moosa2014some}, each of these types is either internal to $\mathbb{C}$, or interalgebraic with $r^{(n)}$ for some minimal, locally modular type $r$. Fix such an analysis, by Corollary \ref{cor: semi-min-pres-pfaff} the type $p$ is definably Pfaffian if and only if every type in this semiminimal analysis is.

    If it is internal to $\mathbb{C}$, Theorem \ref{theo: pfaff-binding-crit} gives us the restriction on its binding group. If not, by the results of \cite{freitag2021not}, the type $r$ must be of order one, and we conclude by Theorem \ref{theo: pfaff-order-one-crit}.
    
\end{proof}

We end this subsection by discussing the difference between Pfaffian types and rationally Pfaffian types. By our previous work, these two notions coincide for $\mathbb{C}$-internal types. Thus, the only types in the semi-minimal analysis that may be rationally Pfaffian but not Pfaffian are interalgebraic with Morley products of order one types, which must be the generic type of a $D$-variety on a curve. We will focus here on $\mathbb{P}^1$, and thus will work with generic types of equations of the form $y' = f(y)$, for some rational function $f$. 

First note that such a type can be Pfaffian, even if $f$ is not a polynomial: consider the equation $y' = \frac{1}{2y}$, its solutions are locally given by $\sqrt{c+t}$, which are Pfaffian. For a non-algebraic example, consider the (multivalued) Lambert function, which is defined as the inverse of the function $w \rightarrow we^w$. Pick $W$ to be any branch of this multivalued function. An elementary computation shows that $W$ satisfies
\[W' = \frac{W}{z(1+W)}\]
\noindent where $z$ is a complex variable. One can check that $W$ is part of the Pfaffian chain over $\mathbb{Q}(z)$

\[\begin{cases}
    y_1' = -\frac{1}{z}(1-y_1)y_1^2 \\
    y_2' = \frac{1}{z} y_1 y_2
\end{cases}\]

\noindent for which $y_1 = \frac{1}{1+W}$ and $y_2 = W$ is a solution.

However, we will show here that these two classes of types are different, at least over constant parameters. Since both are associated to rational $1$-forms on $\mathbb{P}^1$, genus arguments will not work. We start with a lemma:

\begin{lem}\label{lem: pfaff-implies-poly}
    Let $K< \mathbb{C}$ be algebraically closed, and $p \in S(K)$ be the generic type of $y' = f(y)$ for some $f \in K(x)$, which we assume to be strictly disintegrated. If $p$ is Pfaffian then there are a field $K<L$, some $R,S \in L(x)$ such that, with $h = \frac{R}{S}$, we have that $f(h(x))S^2$ is a polynomial in $L[x]$ and $h$ is non-constant.
\end{lem}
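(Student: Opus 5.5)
Assume $p$ is Pfaffian, and let $a \models p$ be algebraic over $K$ together with a $K$-definable Pfaffian chain $(b_1,\dots,b_n)$, chosen of minimal length. The plan is to use strict disintegration to turn non-orthogonality with the last link of the chain into a rational map onto $(\mathbb{A}^1, f)$, and then read off the polynomiality condition from how derivatives transform under that map.

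\textbf{Step 1: interalgebraicity with the top of the chain.} I follow the proof of Proposition \ref{prop: ortho implies not Pfaffian}. Set $L = K\langle b_1,\dots,b_{n-1}\rangle^{\mathrm{alg}}$. Minimality gives $a \ind_K L$ and $a \nind_L b_n$. Moreover $b_n \notin L$, so $b_n$ is a generic solution of $y' = P(y)$, where $P \in L[x]$ is a polynomial. Since both $\tp(a/L)$ and $\tp(b_n/L)$ have $U$-rank one, $a$ and $b_n$ are interalgebraic over $L$.

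\textbf{Step 2: a rational map.} The non-forking extension $p|_L$ is still strictly disintegrated. Fact \ref{fact: tot-dis-dcl}, applied over $L$ with $r = p|_L$ and $s = \tp(b_n/L)$, gives $a \in L\langle b_n\rangle = L(b_n)$; the equality holds because $b_n' \in L[b_n]$. So $a = h(b_n)$ for some $h = R/S \in L(x)$ with $R, S$ coprime. Since $a \notin L$, $h$ is non-constant.

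\textbf{Step 3: the differential identity.} Differentiating $a = h(b_n)$ gives
\[f(h(b_n)) = a' = h^{\delta}(b_n) + h_x(b_n)\,P(b_n),\]
where $h^\delta$ applies $\delta$ to the coefficients. Because $b_n$ is transcendental over $L$, this holds as an identity in $L(x)$:
\[f(h(x)) = h^\delta(x) + h_x(x) P(x).\]
Now multiply by $S^2$:
\[h^\delta S^2 = R^\delta S - R S^\delta, \qquad h_x S^2 = R_x S - R S_x.\]
Both right-hand sides are polynomials. Since $P$ is a polynomial, $f(h(x))S^2 \in L[x]$, as required.

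\textbf{Main obstacle.} The delicate point is Step 2, namely applying Fact \ref{fact: tot-dis-dcl} over the larger base $L$. For this I need that strict disintegration is preserved under non-forking extension; the paper already uses this in the preceding proposition. I also need that both types are minimal over $L$, which Step 1 supplies. A secondary point is ensuring $b_n$ is transcendental over $L$, so that the identity in Step 3 can be read in $L(x)$. Both follow from the minimality of the chain. The hypothesis $K < \mathbb{C}$ is not needed for this argument; presumably it matters only when the lemma is applied later.
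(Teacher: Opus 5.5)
Your proposal is correct and follows essentially the same route as the paper. Both take a minimal-length chain and set $L = K\langle b_1,\dots,b_{n-1}\rangle^{\mathrm{alg}}$. Both apply Fact \ref{fact: tot-dis-dcl} over $L$, using that strict disintegration passes to non-forking extensions, to write $a = h(b_n)$ with $h = R/S \in L(x)$. Both then differentiate, multiply by $S^2$, and read the identity in $L(x)$ because $b_n \notin L$.
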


\begin{proof}
    Assume $p$ is Pfaffian, there must be $a \models p$ and a $K$-definable Pfaffian chain $b_1, \cdots , b_k$ such that $a \in \acl(b_1 , \cdots , b_k , K)$. We may assume $k$ is minimal. Let $L = K\langle b_1, \cdots , b_{k-1} \rangle^{\mathrm{alg}}$. 

    By minimality, we have that $\tp(b_k/L)$ is minimal, $a$ and $b_k$ are interalgebraic over $L$, and $a \ind_K L$. Since non-forking extensions of strictly disintegrated types are strictly disintegrated, Fact \ref{fact: tot-dis-dcl} gives that $a \in \dcl(b_k L)$. In other words, there is $h \in L(x)$ such that $a = h(b_k)$. Let $P \in L[x]$ be such that $b_k'= P(b_k)$ and write $h = \frac{R}{S}$ for some $R,S \in L[x]$. Also denote $R^{\delta}$ and $S^{\delta}$ the polynomials obtained by applying the derivation of $\mathcal{U}$ to the coefficients of $R$ and $S$. We compute:
    \begin{align*}
        f(h(b_k)) & = f(a) \\
        & = a' \\
        & = h(b_k)' \\
        & = \left(\frac{R(b_k)}{S(b_k)}\right)' \\
        & =  \frac{\left( R^{\delta}(b_k) + P(b_k)\pd{R}{x}(b_k)\right)S(b_k) - R(b_k)\left( S^{\delta}(b_k) + P(b_k) \pd{S}{x}(b_k) \right)}{S(b_k)^2} 
    \end{align*}
    \noindent which gives
    \[f(h(b_k))S(b_k)^2 = \left( R^{\delta}(b_k) + P(b_k)\pd{R}{x}(b_k)\right)S(b_k) - R(b_k)\left( S^{\delta}(b_k) + P(b_k) \pd{S}{x}(b_k) \right) \text{ .}\]
    Note that by minimality of $k$, we have $b_k \not \in L = L^{\mathrm{alg}}$. Thus, we must have the equality
    \[f(h(x))S(x)^2 = \left( R^{\delta}(x) + P(x)\pd{R}{x}\right)S(x) - R(x)\left( S^{\delta}(x) + P(x) \pd{S}{x} \right)\]
    \noindent in $L(x)$, giving the result.
\end{proof}

Here is our criterion:

\begin{thm}\label{theo: rat-pfaff-not-pfaff}
    Let $f = \frac{\prod\limits_{i=1}^n x-\alpha_i}{\prod\limits_{j=1}^m x- \beta_j}$ for some complex numbers $\alpha_i, \beta_j$ with $\alpha_i \neq \beta_j$ for all $i,j$, and fix $K = \mathbb{Q}(\alpha_i, \beta_j : 1 \leq i \leq n, 1 \leq j \leq m)^{\mathrm{alg}}$. Let $p$ be the generic type of $y' = f(y)$ over $K$, which we assume to be strictly disintegrated. If either two $\beta_j$ are non-equal, or $0< m < n-2$, then $p$ is not Pfaffian.
\end{thm}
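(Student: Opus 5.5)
The plan is to argue by contradiction using Lemma \ref{lem: pfaff-implies-poly}, and then finish with a divisibility count in $L[x]$. Suppose $p$ is Pfaffian. The hypotheses of Lemma \ref{lem: pfaff-implies-poly} are satisfied, since $K<\mathbb{C}$ is algebraically closed and $p$ is strictly disintegrated. The lemma therefore gives a field $K<L$ and a non-constant $h=R/S$ such that $f(h(x))S(x)^2\in L[x]$. I take $R,S\in L[x]$ coprime; this is harmless because reducing the fraction only removes a common factor from $S$, so the polynomiality condition persists after multiplying by the cancelled factor squared. I will also work over $L^{\mathrm{alg}}$, so that I can factor freely.

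The key computation is to substitute $h=R/S$ directly into $f$:
\[
f(h)S^2 \;=\; S^{\,2+m-n}\,\frac{\prod_{i=1}^n (R-\alpha_i S)}{\prod_{j=1}^m (R-\beta_j S)} .
\]
Next I record the coprimality facts that follow from $\gcd(R,S)=1$. Any common root of $R-\gamma S$ and $S$ would also be a root of $R$, so each $R-\alpha_i S$ and each $R-\beta_j S$ is coprime to $S$. Similarly, for $\alpha_i\neq\beta_j$, a common root of $R-\alpha_iS$ and $R-\beta_jS$ would be a root of $(\alpha_i-\beta_j)S$, hence of $R$ as well, again contradicting coprimality. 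Consequently, no factor of the denominator $\prod_j(R-\beta_jS)$ can be cancelled by the numerator or by a positive power of $S$. For the right-hand side to be a polynomial, every $R-\beta_j S$ must therefore be a nonzero constant.

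The argument now splits into the two cases of the statement.

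\emph{Two distinct $\beta_j$.} Suppose $\beta_1\neq\beta_2$ and both $R-\beta_1S$ and $R-\beta_2S$ are constants. Subtracting shows $(\beta_2-\beta_1)S$ is constant, so $S$ is constant, and then $R$ is constant too. This makes $h$ constant, a contradiction.

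\emph{All $\beta_j$ equal to some $\beta$, with $0<m<n-2$.} Here $R-\beta S=c$ for a constant $c\neq 0$, so $h=\beta+c/S$, and $S$ must be non-constant because $h$ is. The exponent $2+m-n$ is negative. The expression above is then
\[
\frac{\prod_i(R-\alpha_iS)}{S^{\,n-m-2}\,c^m}.
\]
For this to be a polynomial, $S^{\,n-m-2}$ must divide $\prod_i(R-\alpha_iS)$. Each factor $R-\alpha_iS$ is coprime to $S$, so this forces $S$ to be constant, again a contradiction.

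The main obstacle is not the algebra, which is short, but making sure Lemma \ref{lem: pfaff-implies-poly} applies exactly as stated. In particular, its conclusion must be usable with $R,S$ reduced and $h$ non-constant over the possibly large field $L$. If needed, I would re-derive its final identity in $L(x)$ with a reduced fraction to secure this. The coprimality computations themselves are routine.
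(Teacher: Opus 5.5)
Your proposal is correct and is essentially the paper's proof: you use the same substitution of a coprime $h=R/S$, the same coprimality of $S$, $R-\alpha_iS$ and $R-\beta_jS$, and the same two cases, and your second case just runs the paper's two observations ($S$ must be constant, and $R-\beta_1S$ must be constant) in the opposite order. One small correction: your first justification for passing to a reduced fraction goes the wrong way (if $R=gR_0$ and $S=gS_0$, polynomiality of $f(h)S^2=g^2f(h)S_0^2$ does not imply that of $f(h)S_0^2$), but your fallback is right and is what the paper implicitly uses: the computation in the proof of Lemma \ref{lem: pfaff-implies-poly} holds for any representation $h=R/S$ because $b_k$ is transcendental over $L$, so $R,S$ may be taken coprime from the outset.
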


\begin{proof}
    Consider any extension $K<L$, let $h \in L(x)$ be non-constant, and write $h = \frac{R}{S}$ for some $R,S \in L[x]$ relatively prime. We can compute
    \begin{align*}
        f(h(x))S^2 & = \frac{\prod\limits_{i=1}^n \frac{R}{S}-\alpha_i}{\prod\limits_{j=1}^m \frac{R}{S}- \beta_j} S^2 \\
        & = \frac{\prod\limits_{i=1}^n R-\alpha_i S}{\prod\limits_{j=1}^m R- \beta_j S} S^{m+2-n} 
    \end{align*}

    Assume first that two $\beta_j$ are non-equal, which we may take to be $\beta_1$ and $\beta_2$. For $f(h(x))S^2$ to be a polynomial, we must have that $R- \beta_1 S$ divides the numerator. Note that $\gcd(R-\beta_1S, S) = \gcd(R,S) = 1$, and also that for all $i$ we have $\gcd (R-\beta_1S, R- \alpha_i S) = \gcd(R,S)$ as $\alpha_i \neq \beta_1$. Thus, the polynomial $R-\beta_1 S$ divides the numerator if and only if $R-\beta_1 S$ is constant. By the same reasoning $R-\beta_2 S$ must be constant, and thus $(\beta_1 -\beta_2)S$ must be constant. As $\beta_1 \neq \beta_2$, we obtain that $S$ is constant, and therefore that $R $ is constant. So $f(h(x))S^2$ cannot be a polynomial, thus $p$ is not Pfaffian by Lemma \ref{lem: pfaff-implies-poly}.

    Now assume that $2 < m+2 < n$. Continuing the previous computation, we have
    \begin{align*}
        f(h(x))S^2 & = \frac{\prod\limits_{i=1}^n R-\alpha_i S}{S^{n-m-2}\prod\limits_{j=1}^m R- \beta_j S}
    \end{align*}
    Again, note that $\gcd(S, R- \alpha_i S) = \gcd(R,S) = 1$, thus for $S$ to divide the numerator, it must be constant. Say $S = c$ for some $c \in L \setminus \{ 0 \}$. Then we can write
    \begin{align*}
        f(h(x))S^2 & = c^{m+2-n} \frac{\prod\limits_{i=1}^n R-\alpha_i c}{\prod\limits_{j=1}^m R- \beta_j c} \text{ .}
    \end{align*}
    Note that $R$ cannot be constant too, as $h = \frac{R}{S}$ is non-constant. Because $m > 0$, we can look at the polynomial $R-\beta_1 c$, which must have a root $d$. But it cannot be a root of the numerator, as then $R(d)-\alpha_ic = R(d)-\beta_1 c$ for some $i$, which would imply that $\beta_1 = \alpha_i$. Therefore $f(h(x))S^2$ cannot be a polynomial, so $p$ is not Pfaffian by Lemma \ref{lem: pfaff-implies-poly}.
\end{proof}

Before applying this theorem to obtain examples of rationally Pfaffian types that are not Pfaffian, let us comment on its limitations and potential generalizations.

First, it is unclear if a necessary and sufficient condition could be obtained from these techniques. The problem is that there seems to be no way to bound the length $k$ of the Pfaffian chain in Lemma \ref{lem: pfaff-implies-poly}. If $k = 1$ could be obtained, then one could reverse the computations of Theorem \ref{theo: rat-pfaff-not-pfaff} to describe a necessary and sufficient condition, as long as the type is strictly disintegrated. Note that it is also possible to reverse these computations to obtain many examples of equations $y ' = f(y)$ for $f \in \mathbb{C}(x) \setminus \mathbb{C}[x]$ with Pfaffian generic type.

Finally, it is unclear if one could obtain general results regarding types that are not strictly disintegrated. In that case, one must replace the rational function $h$ with a finite-to-finite correspondence, which may make the computations trickier.

We end this section by proving that the two classes are different:

\begin{cor}\label{cor: rat-pfaff-not-pfaff-ex}
    Let $a,b$ be two non-equal complex numbers different from $0$ and $1$, and with $\frac{a(a-1)}{b(b-1)} \not \in \mathbb{Q}$. The generic type $p \in S(\mathbb{Q}(a,b)^{\mathrm{alg}})$ of
    \[y' = \frac{(y-a)(y-b)}{y(y-1)}\]
    is not pfaffian.
\end{cor}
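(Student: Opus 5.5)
This is an application of Theorem \ref{theo: rat-pfaff-not-pfaff} with $n = m = 2$, $\alpha_1 = a$, $\alpha_2 = b$, $\beta_1 = 0$, $\beta_2 = 1$. The numerical hypotheses are satisfied: $\alpha_i \neq \beta_j$ for all $i,j$ because $a,b \notin \{0,1\}$, and the two poles $\beta_1 = 0 \neq 1 = \beta_2$ are distinct, so the first alternative of the theorem applies. The field $K$ of that theorem is $\mathbb{Q}(a,b,0,1)^{\mathrm{alg}} = \mathbb{Q}(a,b)^{\mathrm{alg}}$, which matches the base of $p$. The only remaining hypothesis is that $p$ is strictly disintegrated, and most of the work goes there.

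\textbf{Step 1: $p$ is not $\mathbb{C}$-internal.} Write the equation as the $1$-form $\omega = \frac{dy}{f(y)} = \frac{y(y-1)}{(y-a)(y-b)}\,dy$ on $\mathbb{P}^1$. Partial fractions give
\[
\omega = \left(1 + \frac{r_a}{y-a} + \frac{r_b}{y-b}\right) dy, \qquad r_a = \frac{a(a-1)}{a-b}, \quad r_b = -\frac{b(b-1)}{a-b}.
\]
Since $a,b \neq 0,1$, both residues are nonzero. At infinity $\omega$ has a double pole with residue $-(r_a+r_b)$. The ratio $r_a/r_b = -\frac{a(a-1)}{b(b-1)}$ is irrational by hypothesis.

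By the Rosenlicht / Hrushovski--Itai analysis of autonomous order one equations $y' = f(y)$ over constants, the generic type is $\mathbb{C}$-internal (equivalently, non-orthogonal to the constants) only if $\omega$ is, up to constant multiple, $dg$ or $dg/g$ for some rational $g$. If $\omega = c\,dg$, then $\omega$ has no residues, which is false. If $\omega = c\,dg/g$, then every pole of $\omega$ is simple, but $\infty$ is a double pole, so this is also impossible. Hence $p$ is orthogonal to $\mathbb{C}$ and, being of order one, is minimal and geometrically trivial.

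\textbf{Step 2: upgrade to strict disintegration.} I would use Hrushovski--Itai (\cite[Proposition 2.1]{hrushovski2003model} and the surrounding results), or the criterion in \cite{freitag2021not} for $y'=f(y)$. That criterion says the generic type is strictly disintegrated, i.e.\ it has no nontrivial algebraic relations between independent-looking solutions, when the residues of $1/f$ at its poles are $\mathbb{Q}$-linearly independent. Note $r_a$ and $r_b$ are $\mathbb{Q}$-linearly independent exactly because their ratio is irrational. This is where the hypothesis $\frac{a(a-1)}{b(b-1)}\notin\mathbb{Q}$ enters.

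The main obstacle is this step. A relation between two generic solutions $y_1, y_2$ would come from a finite-to-finite correspondence pulling back $\omega$ to itself. Comparing residues under such a correspondence would force a rational proportionality among $\{r_a, r_b, r_a + r_b\}$, which the irrationality rules out. If no clean citation gives this directly, I would carry out that residue comparison by hand.

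Once strict disintegration is established, Theorem \ref{theo: rat-pfaff-not-pfaff} gives that $p$ is not Pfaffian.
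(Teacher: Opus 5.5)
Your proposal is correct and follows the paper's proof: you apply Theorem \ref{theo: rat-pfaff-not-pfaff} via its first alternative ($\beta_1=0\neq 1=\beta_2$), and you get strict disintegration from the Hrushovski--Itai residue criterion, which the paper cites precisely as \cite[Lemma 2.23]{hrushovski2003model} ($1/f$ has at least two simple poles and no two residues are rational multiples of each other). With that citation in hand, your Step 1 (orthogonality to $\mathbb{C}$ via Rosenlicht) and the by-hand residue comparison are unnecessary; note also that the residue comparison on its own only shows a correspondence respects the fibres over $a$, $b$, $\infty$, not that it is trivial, so it would not replace the citation.
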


\begin{proof}
    Let $f(x) = \frac{(x-a)(x-b)}{x(x-1)}$. According to Theorem \ref{theo: rat-pfaff-not-pfaff}, the generic type $p$ of $y' = f(y)$ over $K = \mathbb{Q}(a,b)^{\mathrm{alg}}$ will not be Pfaffian if it is strictly disintegrated. According to \cite[Lemma 2.23]{hrushovski2003model}, this is the case if $\frac{1}{f}$ has at least two simple poles and no two residues are rational multiple of each other. The poles of $\frac{1}{f}$ are at $a$ and $b$, are both simple with residue $\frac{a(a-1)}{a-b}$ and $\frac{b(b-1)}{b-a}$. It is therefore enough to pick $a,b$ such that the ratio $-\frac{a(a-1)}{b(b-1)}$ of these two quantities is irrational to obtain a non-Pfaffian type. 
\end{proof}

A similar proof, but using the other case of Theorem \ref{theo: rat-pfaff-not-pfaff}, gives us:

\begin{cor}
    Let $a_1,a_2,a_3,a_4$ be three non-equal, non zero complex numbers, and with $\frac{a_l}{a_k}\frac{\prod\limits_{i \neq k,l}{a_k-a_i}}{\prod\limits_{j \neq k,l} a_l-a_j} \not\in \mathbb{Q}$ for all non-equal $l,k \in \{ 1,2,3,4\}$. The generic type $p \in S(\mathbb{Q}(a,b)^{\mathrm{alg}})$ of
    \[y' = \frac{(y-a_1)(y-a_2)(y-a_3)(y-a_4)}{y}\]
    is not pfaffian.
\end{cor}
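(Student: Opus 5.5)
The plan is to mirror the proof of Corollary \ref{cor: rat-pfaff-not-pfaff-ex}, now using the second case of Theorem \ref{theo: rat-pfaff-not-pfaff}. I take $f(x) = \frac{\prod_{i=1}^4 (x-a_i)}{x}$, so $n=4$, $m=1$, $\beta_1 = 0$ and $\alpha_i = a_i$. The hypotheses give $\alpha_i \neq \beta_1$, since the $a_i$ are non-zero, and they give $0 < m$.

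The inequality needed is $m < n-2$, i.e. $1<2$, which holds. Note also that with $m=1$ the case ``two $\beta_j$ non-equal'' is unavailable. This matches the wording ``the other case'', so the proof really does rest on the second case. I work over $K=\mathbb{Q}(a_1,\dots,a_4)^{\mathrm{alg}}$; the $\mathbb{Q}(a,b)$ in the statement is evidently a typo for this field. Once strict disintegration is established, Theorem \ref{theo: rat-pfaff-not-pfaff} gives directly that $p$ is not Pfaffian.

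The main step is verifying strict disintegration via \cite[Lemma 2.23]{hrushovski2003model}. That criterion asks that $\frac{1}{f}$ have at least two simple poles and that no two residues be rational multiples of one another. Here $\frac1f = \frac{x}{\prod_i (x-a_i)}$. Its poles are at the four distinct points $a_1,\dots,a_4$, and they are simple because the $a_i$ are pairwise distinct. The residue at $a_k$ is
\[ r_k = \frac{a_k}{\prod_{i\neq k}(a_k-a_i)}. \]
The ratio $r_k/r_l$ equals $\frac{a_k}{a_l}\cdot\frac{\prod_{j\neq l}(a_l-a_j)}{\prod_{i\neq k}(a_k-a_i)}$. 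The factors $(a_l-a_k)$ and $(a_k-a_l)$ appearing in these products cancel up to a sign $-1$. What remains is, up to sign, the inverse of the quantity $\frac{a_l}{a_k}\frac{\prod_{i\neq k,l}(a_k-a_i)}{\prod_{j\neq k,l}(a_l-a_j)}$ in the hypothesis. Since a rational number stays rational under negation and inversion, the hypothesis that this quantity is irrational is exactly the condition that no two residues are rational multiples.

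The only obstacle I expect is bookkeeping: checking the exact form of the residue criterion in \cite{hrushovski2003model}, and getting the sign and inversion in the residue ratio right so that it matches the stated hypothesis. The remainder is a direct application of Theorem \ref{theo: rat-pfaff-not-pfaff}.
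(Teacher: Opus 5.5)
Your proposal is correct and is the paper's argument: the paper proves this corollary exactly like Corollary \ref{cor: rat-pfaff-not-pfaff-ex}, with two changes. It invokes the second case of Theorem \ref{theo: rat-pfaff-not-pfaff} (here $n=4$, $m=1$, $\beta_1=0$, so $0<m<n-2$ and $\alpha_i\neq\beta_1$), and it gets strict disintegration from the residue criterion of \cite[Lemma 2.23]{hrushovski2003model}. Your computation that $r_k/r_l$ is $-1$ times the inverse of the hypothesized quantity is exactly what matches the hypothesis to pairwise non-commensurability of the residues; and since $1/f=O(x^{-3})$, the form $dx/f$ has no pole at infinity, so these four residues are all of them.
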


The following is left unanswered by our work here:

\begin{ques}
    Can we provide a necessary and sufficient criteria for an order one type to be Pfaffian? 
\end{ques}

If this can be done, a necessary and sufficient condition for any finite rank type to be Pfaffian could be given, in the spirit of Theorem \ref{theo: rat-pfaff-semi-min}.

\subsection{Real consequences}\label{subsec: real-conseq}

Historically, complex analytic Pfaffian functions were much less studied than real analytic Pfaffian functions. Indeed, the applications of Pfaffian functions to model theory are all in the context of extensions of the real field. Moreover, the bounds on the number of zeroes do not hold for complex Pfaffian functions (as an example, the complex exponential has infinitely many zeroes). In this short subsection, we give consequences of our results for the non-existence of real analytic Pfaffian solutions to some algebraic ordinary differential equations, starting with the internal case.

\begin{prop}\label{prop: simple-real-pfaff}
    Let $L(y) = 0$ be a monic linear differential equation of order $n$ defined over some field $K< \mathbb{R}(t)$, with Galois group $\mathrm{Sl}_n$ for $n>2$. For any open interval $I \subset \mathbb{R}$, the only Pfaffian solution of $L(y) = 0$ on $I$ is the zero function.
\end{prop}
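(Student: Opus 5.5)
Suppose for contradiction that $f \colon I \to \mathbb{R}$ is a nonzero real Pfaffian solution of $L(y)=0$. The plan is to complexify $f$, use the Galois group to show that its type over $K$ is internal with binding group containing $\mathrm{SL}_n$, and then contradict Theorem \ref{theo: pfaff-binding-crit}.

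\textbf{Complexification.} Take a real Pfaffian chain $f_1,\dots,f_N$ on $I$ containing $f$, with polynomials $P_i \in \mathbb{R}[x,y_1,\dots,y_i]$. All the $f_i$ are real analytic, so they extend holomorphically to a common complex neighbourhood $U$ of a subinterval of $I$. By the identity theorem, the same polynomial relations $f_i' = P_i(t,f_1,\dots,f_i)$ hold on $U$. Hence the extension of $f$ is a holomorphic $\mathbb{C}$-Pfaffian function on $U$, and it still satisfies $L(y)=0$. We may enlarge $K$ to a finitely generated field $F(t)$ containing the coefficients of $L$ and of the $P_i$. By Proposition \ref{pro: seidenberg-Pfaffian}, the type $q=\tp(f/F(t))$ is then Pfaffian in the definable sense.

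\textbf{Identifying the binding group.} The solution set of $L$ is $\mathbb{C}$-internal, so $q$ is $\mathbb{C}$-internal. The binding group of the full solution space is the differential Galois group $\mathrm{SL}_n(\mathbb{C})$. This is where the real-field setup needs care: enlarging $K$ to $F(t)$ and then to $F(t)^{\mathrm{alg}}$ can only shrink the group to a finite-index subgroup in the first case, or leave it unchanged after adjoining constants. Since $\mathrm{SL}_n$ is connected, its Galois group stays $\mathrm{SL}_n$ over the new base; I would cite \cite[Lemma 2.1]{jaoui2022abelian} for this.

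\textbf{Binding group of $q$.} Because $f \neq 0$, it is a nonzero vector in the solution space $V \cong \mathbb{C}^n$, and $\mathrm{SL}_n$ acts transitively on $V\setminus\{0\}$. Hence $q$ is the generic type of $V$, and its binding group is the image of $\mathrm{SL}_n(\mathbb{C})$ acting on a single nonzero vector. That action is faithful on the orbit, because $V\setminus\{0\}$ spans $V$. Therefore $\bg{K}{q}$ is definably isomorphic to $\mathrm{SL}_n(\mathbb{C})$.

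\textbf{Main step: $\mathrm{SL}_n$ has no series of the required form.} We must show that for $n>2$, $\mathrm{SL}_n(\mathbb{C})$ has no subnormal series whose quotients are finite or isomorphic to $\mathbb{G}_a$, $\mathbb{G}_m$ or $\mathrm{PSL}_2$. Take such a definable series and pass to connected components. Any definable (hence algebraic) normal subgroup of $\mathrm{SL}_n$ is finite central or the whole group, because $\mathrm{SL}_n$ is almost simple. Refining, the first infinite quotient would be $\mathrm{SL}_n$ modulo a finite central subgroup. This would have to be isomorphic to $\mathbb{G}_a$, $\mathbb{G}_m$ or $\mathrm{PSL}_2$, which is impossible by dimension: $n^2-1 > 3$ for $n>2$. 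Singer's non-eulerian result \cite{singer1985solving} gives the same conclusion.

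Theorem \ref{theo: pfaff-binding-crit} then says $q$ is not Pfaffian, a contradiction. So the only Pfaffian solution on $I$ is the zero function.
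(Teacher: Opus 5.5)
Your proof is correct and follows the same route as the paper: realize the nonzero solution in a differentially closed field, use transitivity of $\mathrm{SL}_n(\mathbb{C})$ on the nonzero solutions to see that it realizes the generic type with binding group $\mathrm{SL}_n(\mathbb{C})$, and contradict Theorem \ref{theo: pfaff-binding-crit}. The differences are minor. The paper embeds the ring of real analytic functions on $I$ directly into a differentially closed field, so it needs neither complexification nor Proposition \ref{pro: seidenberg-Pfaffian}. The paper also leaves implicit two points you make explicit: the almost-simplicity argument showing $\mathrm{SL}_n$, $n>2$, has no series of the required form, and adjoining the chain's real coefficients to the base.
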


\begin{proof}
    Assume, on the contrary, that it does have a non-zero real analytic Pfaffian solution $f$ on some open interval $I$. The ring of real analytic functions on $I$ is a differential integral domain, which can thus be embedded in a differentially closed field $\mathcal{U}$, which we may as well assume to be $\aleph_1$-saturated, with field of constants equal to $\mathbb{C}$. The image $a$ of $f$ under this embedding must be $K$-definably Pfaffian, where we identify $K$ with its image under the embedding. We must also have $L(a) = 0$ and $a \neq 0$. 

    Consider the generic type $q$ of $L(y) = 0$. Note that the set of solution of $L(y) = 0$ forms an $n$-dimensional $\mathbb{C}$-vector space, and its set of non-zero elements is acted upon transitively by its Galois group $\mathrm{Sl}_n$. In particular, we see that $q$ is weakly orthogonal to $\mathbb{C}$, and isolated by $L(y) = 0 \wedge y \neq 0$. As $a \neq 0$, we must have $a \models q$, and in particular $\tp(a/K)$ has binding group $\mathrm{Sl}_n$. By Theorem \ref{theo: pfaff-binding-crit} $a$ is not $K$-definably pfaffian, a contradiction.
\end{proof}

The case of minimal types has a similar proof:

\begin{prop}\label{prop: min-real-pfaff}
    Let $P(y^{(n)}, \cdots , y',y) = 0$ be an algebraic differential equation defined over $K<\mathbb{R}$, and $p$ its generic type over $K$, which we assume to be minimal. If $p$ is not Pfaffian, then $P(y^{(n)}, \cdots , y',y) = 0$ has no non-constant real analytic Pfaffian solution, on any open real interval.
\end{prop}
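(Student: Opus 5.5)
We argue by contradiction, following the proof of Proposition \ref{prop: simple-real-pfaff}. Suppose that on some open interval $I \subset \mathbb{R}$ there is a non-constant real analytic Pfaffian solution $f$ of $P(y^{(n)}, \cdots, y', y) = 0$. Let $f_1, \cdots, f_N$ be a real Pfaffian chain containing $f$, with $f_i' = P_i(t, f_1, \cdots, f_i)$ for real polynomials $P_i$. The ring of real analytic functions on $I$ is a differential integral domain. We embed its fraction field into an $\aleph_1$-saturated $\mathcal{U} \models \mathrm{DCF}_0$ whose constant field is $\mathbb{C}$. This is possible since the constants of that fraction field are real, hence sit inside $\mathbb{C}$.

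Let $a$ be the image of $f$. The chain becomes a definable Pfaffian chain over the differential field $F$ generated by $K$, the identity function $t$ (which satisfies $t' = 1$), and the real coefficients of the $P_i$. After adjoining $t$ to the chain, all coefficients are constants. Adjoining constants to the base does not change orthogonality or Pfaffianity questions for a minimal type over $K < \mathbb{R}$. Hence $a$ is $F$-definably Pfaffian, with $F = K(C_0)(t)$ for a set $C_0$ of real constants.

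The key step is to show that $a$ realizes the nonforking extension $p|_F$ of $p$. Since $p$ is minimal, its formula $P = 0$ (together with the separant condition) has $U$-rank one. The only non-generic solutions over $F$ are therefore algebraic over $F$. We need to rule out $a \in F^{\mathrm{alg}}$.

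Because $K(C_0) \subset \mathbb{C}$ consists of constants, $a$ algebraic over $K(C_0)(t)$ means $f$ is an algebraic function of $t$. Two cases remain.

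\emph{Case 1: the generic solution $p$ is not algebraic over $\mathbb{C}(t)$.} Here the key point is that the real coefficients lie in the constants, and $p$ is over $K < \mathbb{R}$, a field of constants. If $a \in \mathbb{C}(t)^{\mathrm{alg}}$, then differentiating shows that $a$ satisfies a lower-order equation, or $a$ is a non-generic solution. Since $t$ is independent from $K \cup \mathbb{C}$ in the appropriate sense, a solution algebraic over $\mathbb{C}(t)$ together with the autonomy of the equation forces $\tp(a/\mathbb{C})$ to be nonorthogonal to the constants via $t$. Concretely, $t \in \acl(a, \mathbb{C})$ when $a$ is non-constant. Then $p$ is nonorthogonal to the type of $t$, which is $\mathbb{C}$-internal with binding group $\mathbb{G}_a$.

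\emph{Case 2: $p$ is nonorthogonal to the constants.} This is the situation Case 1 reduces to. A minimal type nonorthogonal to $\mathbb{C}$ over a constant field that is Pfaffian-obstructed requires its binding group to be non-eulerian, such as an elliptic curve. In that event, we apply Theorem \ref{theo: pfaff-binding-crit} directly to $\tp(a/F)$.

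In the generic case, $\tp(a/F)$ is the nonforking extension of $p$. Its Pfaffianity over $F$ would give Pfaffianity of $p$. The reason is that $F = K(C_0, t)$, with $t$ a Pfaffian element and $C_0$ constants independent from $a$ over $K$. A definability argument then specializes the constants $C_0$ to elements of $K^{\mathrm{alg}}$ and $t$ to a realization of $y' = 1$ independent from $a$, using stationarity. This contradicts the hypothesis that $p$ is not Pfaffian.

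The main obstacle is this last transfer: deducing that $p$ is $K$-definably Pfaffian from $p|_F$ being $F$-definably Pfaffian. I would first try to show that $F$-definable Pfaffianity of a realization $a \ind_K F$ descends to $K$. The plan is to replace the parameters $F$ by an independent realization of $\tp(F/K)$ lying in a Pfaffian extension of $K$. Alternatively, one can simply note that the paper's notion of Pfaffian is insensitive to such parameters, since Lemma \ref{lem: pfaff-pres-fib} allows absorbing the Pfaffian element $t$ and the constants into the chain.
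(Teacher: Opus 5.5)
\textbf{There is a genuine gap in the step you call key, but that step is unnecessary.} Your setup (embedding the real analytic functions, taking the image $a$ of $f$) matches the paper, and your closing remark is the right idea. The trouble is that you commit to proving $a\models p|_F$ with $F=K(C_0)(t)$, which forces you to rule out $a\in F^{\mathrm{alg}}$, and your case analysis does not do this.

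\begin{itemize}
\item In the case $a\in F^{\mathrm{alg}}$, the type $\tp(a/F)$ is algebraic. The finite-case argument of Lemma \ref{lem: bin-grp-implies-pfaff} shows it is $F$-definably Pfaffian, so applying Theorem \ref{theo: pfaff-binding-crit} to it yields nothing.
\item Your claim that a minimal non-Pfaffian type nonorthogonal to $\mathbb{C}$ must have a non-eulerian (e.g.\ elliptic) binding group is not established. Theorem \ref{theo: pfaff-binding-crit} concerns $\mathbb{C}$-internal types, and such a $p$ is a priori only almost internal. Even granting the claim, it gives no contradiction with $a\in F^{\mathrm{alg}}$.
\item Showing $t\in\acl(a,\mathbb{C})$ gives nonorthogonality to the constants, but nothing more.
\item The specialization argument in your ``generic case'' is also unjustified: nothing guarantees the chain relations survive sending $C_0$ into $K^{\mathrm{alg}}$.
\end{itemize}

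\textbf{The fix, which is the paper's proof.} You never need to pass to $F$. Since $K<\mathbb{R}$ consists of constants, so does $K^{\mathrm{alg}}$. As $a$ is non-constant, $a\notin K^{\mathrm{alg}}$, and minimality gives $a\models p|_{K^{\mathrm{alg}}}$ directly. This is exactly the paper's argument.

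For the Pfaffian side, make your last remark precise. It is an elementary observation, not an application of Lemma \ref{lem: pfaff-pres-fib}. Prepend to the chain the element $t$ (with $t'=1$) and each real coefficient $c$ of the $P_i$ (with $c'=0$). The equations $t'=1$ and $c'=0$ are polynomial over $\mathbb{Q}$, and each $P_i$ becomes a polynomial over $\mathbb{Q}$ in the enlarged chain. So $a$ lies in a $\mathbb{Q}$-definable, hence $K$-definable, Pfaffian chain. Since $a$ realizes $p$, this makes $p$ Pfaffian, a contradiction.

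Seen this way, the case $a\in F^{\mathrm{alg}}$ is no obstacle; it is just another instance of a Pfaffian realization of $p$. The paper only asserts that $a$ is $K$-definably Pfaffian, and your absorption observation is what justifies that. Keep the absorption argument and drop the detour through $p|_F$.
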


\begin{proof}
    Suppose that it does have a real analytic, non-constant Pfaffian solution $f$. Again embedding the situation into a differentially closed field $\mathcal{U}$, we see that the image of $a$ of $f$ under the embedding is a solution of $P(y^{(n)}, \cdots , y',y) = 0$ that does not belong to $K^{\mathrm{alg}}$. By minimality, this implies that $a \models p\vert_{K^{\mathrm{alg}}}$. But $a$ is also $K$-definably Pfaffian, which implies that $p$ is Pfaffian.
\end{proof}

\begin{rem}
    Note that the same proof works for rationally Pfaffian.
\end{rem}

We can use this to obtain non-Pfaffian real analytic functions:

\begin{cor}
    Any real analytic, non-constant solution, on any open real interval, of either
    \begin{itemize}
        \item a Weierstrass equation with real coefficients,
        \item an equation of the form $y' = \frac{(y-a)(y-b)}{y(y-1)}$ with $a,b$ non-equal real numbers with $a,b \neq 0,1$ and $\frac{a(a-1)}{b(b-1)} \not\in \mathbb{Q}$
    \end{itemize}
    is not Pfaffian.
\end{cor}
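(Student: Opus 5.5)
The plan is to reduce both items to Proposition \ref{prop: min-real-pfaff}, which turns the problem into showing that the generic type over the real parameter field is minimal and not Pfaffian. In each case I take $K$ to be the field generated over $\mathbb{Q}$ by the (real) coefficients of the equation, so $K<\mathbb{R}$. For an order one equation the generic type $p$ over $K$ has $U$-rank one, and $p$ is minimal in the sense required. Irreducibility of $(y')^2-4y^3+g_2y+g_3$ and of $y'(y^2-y)-(y-a)(y-b)$ as polynomials in $y,y'$ is immediate. Proposition \ref{prop: min-real-pfaff} then asks only that $p$ is not Pfaffian. By the proof of that proposition it suffices to know this for $p\vert_{K^{\mathrm{alg}}}$, since a realization of $p$ lying outside $K^{\mathrm{alg}}$ realizes the non-forking extension.

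For the Weierstrass equation $(y')^2=4y^3-g_2y-g_3$ with $g_2,g_3\in\mathbb{R}$ and $27g_3^2-g_2^3\neq 0$, I would argue as in Corollary \ref{cor: weierstrass not Pfaffian}. The generic type over $K^{\mathrm{alg}}$ is $\mathbb{C}$-internal. It is weakly $\mathbb{C}$-orthogonal because it has rank one and a non-orthogonal realization would be constant. Its binding group is therefore an elliptic curve $E(\mathbb{C})$. An elliptic curve has no subnormal series with quotients finite, $\mathbb{G}_a$, $\mathbb{G}_m$ or $\mathrm{PSL}_2$: it is connected and commutative of dimension one, and it is not isomorphic to $\mathbb{G}_a$ or $\mathbb{G}_m$. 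Theorem \ref{theo: pfaff-binding-crit} then says the type is not Pfaffian, and Proposition \ref{prop: min-real-pfaff} concludes. The corollary should be read as requiring a non-degenerate discriminant, which I take as the meaning of ``a Weierstrass equation''.

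For the second equation, take $a,b$ real as in the hypotheses and $K=\mathbb{Q}(a,b)$. Corollary \ref{cor: rat-pfaff-not-pfaff-ex} already says that the generic type over $\mathbb{Q}(a,b)^{\mathrm{alg}}$ is not Pfaffian. Its proof also establishes strict disintegration, hence minimality, via \cite[Lemma 2.23]{hrushovski2003model}. Applying Proposition \ref{prop: min-real-pfaff} finishes this case.

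The main obstacle is checking that the hypotheses of Proposition \ref{prop: min-real-pfaff} match up, in particular the passage between $K$ and $K^{\mathrm{alg}}$. The results I invoke (Theorem \ref{theo: pfaff-binding-crit}, Corollary \ref{cor: rat-pfaff-not-pfaff-ex}) are stated over algebraically closed bases, whereas the proposition uses $p$ over $K$. I would handle this through the observation already in that proof: the embedded solution realizes $p\vert_{K^{\mathrm{alg}}}$. It is $K$-definably, hence $K^{\mathrm{alg}}$-definably, Pfaffian, contradicting non-Pfaffianity over $K^{\mathrm{alg}}$. A further small point is that non-constant real analytic solutions map outside $K^{\mathrm{alg}}$. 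This holds because an element of $K^{\mathrm{alg}}$ is a constant of $\mathcal{U}$, and the embedding is injective on derivatives.
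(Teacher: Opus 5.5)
Your proposal is correct and is exactly the paper's argument: Proposition \ref{prop: min-real-pfaff} is combined with Theorem \ref{theo: pfaff-binding-crit} (via the reasoning of Corollary \ref{cor: weierstrass not Pfaffian}) for the Weierstrass case and with Corollary \ref{cor: rat-pfaff-not-pfaff-ex} for the second case, and your remarks on $K$ versus $K^{\mathrm{alg}}$ and on the non-degenerate discriminant only make explicit what the paper leaves implicit. The one imprecision, inherited from the proof of Proposition \ref{prop: min-real-pfaff}, is the claim that the embedded solution is $K$-definably Pfaffian: a real Pfaffian chain may involve $x$ and arbitrary real coefficients $\bar c$, so after prepending $t$ (with $t'=1$) you only get a $K(\bar c)$-definable chain, but both non-Pfaffianity arguments run unchanged over the algebraically closed constant field $K(\bar c)^{\mathrm{alg}}$.
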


\begin{proof}
    This is an immediate consequence of Proposition \ref{prop: min-real-pfaff} together with Theorem \ref{theo: pfaff-binding-crit} and Corollary \ref{cor: rat-pfaff-not-pfaff-ex}, respectively.
\end{proof}

\section{Reducibility and solvability}\label{dredsolve}

\subsection{Preliminaries on reducibility and solvability}\label{subsec: red-prel}

The special classes of functions particular to this section have their origins in a question of Fuchs from the late 19th century, see \cite{fano1900ueber}. Given some linear differential equation of order $n$, when is it possible to express all of the solutions of the equation in terms of solutions of linear differential equations of order $d < n$? This corresponds to the following:

\begin{defn} \label{Singer1}
A differential field extension $F \supset k$ is {\em $d$-solvable} if there is a tower of differential fields $k= K_0 \subset K_1 \subset K_2 \ldots \subset K_r$ with $F \subset K_r$ and for each $i \leq r$, one of these two possibilities is true:
\begin{itemize}
    \item $K_i/K_{i-1}$ is a finite algebraic extension. 
    \item $K_i$ is a PV-extension of $K_{i-1}$ for a homogeneous linear differential equation over $K_{i-1}$ of order at most $d$.
\end{itemize}
\end{defn}

We call a type $p \in S(K)$ $d$-solvable if some (any) of its realizations is contained in a $d$-solvable extension of $K$. A realization of such a type $p$ is said to be $d$-solvable over $K$.

\begin{prop} \cite[page 422]{nguyen2009d} \label{linearGLd}
Let $F/ k$ be a PV-extension with constant field $\mc C(k).$ If $Aut(F/k) \subset GL_m$, then there is a linear differential equation $X$ of order at most $m$ such that $F$ is the PV-extension of $X.$
\end{prop}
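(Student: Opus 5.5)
The plan is to realize $F$ as the Picard--Vessiot extension of the linear system coming from a faithful $m$-dimensional representation, and then pass from that system to a scalar equation via a cyclic vector. Write $G = \mathrm{Aut}(F/k)$. By hypothesis $G$ is embedded as an algebraic subgroup of $GL_m(\mc C(k))$, so $G$ acts faithfully on $V = \mc C(k)^m$. We first produce inside $F$ an $m$-dimensional $\mc C(k)$-subspace $W$ of $F^m$ stable under $G$ and on which $G$ acts through this representation. Let $Y$ be a fundamental matrix of some defining system of $F$, with $F = k(Y)$. By the Picard--Vessiot correspondence, $F$ is the function field of a $G$-torsor over $k$, and after a finite extension of $k$ it becomes $k\otimes \mc C(k)[G]$. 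The coordinate functions $x_{ij}$ of $G\subset GL_m$ lie in $\mc C(k)[G]$. Twisting them by the torsor gives elements $z_{ij}\in F$, with $Z=(z_{ij})\in GL_m(F)$, such that $\sigma(Z) = Z\,[\sigma]$ for every $\sigma\in G$, where $[\sigma]$ is the matrix of $\sigma$.

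The concrete construction of $Z$ goes through the regular representation. Since $\mc C(k)[G]\cong (F\otimes_k F)^{\partial}$, we can take $Z$ to be the image of an equivariant embedding of $V^*$ into the finite $G$-submodule of $F$ spanned by matrix coefficients. This gives $m$ solutions spanning a copy of $V$, together with the associated columns. We then set $A = Z' Z^{-1}$. For $\sigma\in G$,
\[
\sigma(A)=Z'[\sigma][\sigma]^{-1}Z^{-1}=A,
\]
so $A\in M_m(k)$ by the Galois correspondence. The system $Y'=AY$ therefore has fundamental matrix $Z$. Because $G$ acts faithfully on the columns, the only element fixing $k(Z)$ is the identity, so $k(Z)=F$, and $F$ is the PV-extension of $Y'=AY$, an $m\times m$ system.

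The main obstacle is non-triviality of the torsor. If the torsor is nontrivial over $k$, then $Z$ with entries in $F$ need not exist directly, and one must instead use the faithful $G$-module $V$ together with Tannakian reconstruction. In this approach $F\otimes V$ descends along the fibre functor to a differential module $M$ over $k$ of dimension $m$, whose PV-extension is $F$. I would argue it this way to avoid choosing coordinates.

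Finally, convert the differential module $M$ of dimension $m$ into a scalar equation using the cyclic vector lemma, which holds since $k$ has a non-constant element; if $k$ has no non-constant element, the extension is algebraic-free and trivial. This gives $L(y)=0$ of order $m$ with $M\cong k[\partial]/k[\partial]L$. Solutions of $L$ in $F$ generate the same field as the solutions of $M$, so $F$ is the PV-extension of $L$, of order at most $m$.
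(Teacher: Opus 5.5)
The paper gives no proof of this statement; it is quoted from Nguyen. The route you eventually take is the standard argument behind the citation: realize the faithful representation $V=C^m$ of $G$ (with $C=\mathcal{C}(k)$) as a rank-$m$ differential module over $k$ via the Tannakian equivalence, use faithfulness and the Galois correspondence to see that its fundamental matrix generates $F$, then apply a cyclic vector. This is fine when $k$ has a non-constant element.

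However, the ``main obstacle'' you describe is not an obstacle. A matrix $Z\in GL_m(F)$ with $\sigma(Z)=Z[\sigma]$ exists whether or not the torsor $\mathrm{Spec}\,R$ is trivial ($R$ the PV ring). Descent along any $G$-torsor shows that $N=(R\otimes_C V)^G$ is an $m$-dimensional $k$-space, stable under $\partial\otimes 1$, with $R\otimes_k N\cong R\otimes_C V$. A $k$-basis of $N$ then yields such a $Z$ (for $V$ or its dual, depending on conventions). Triviality of the torsor would only let you take $Z\in G(F)$, i.e.\ the coordinate functions themselves. In fact the fundamental matrix of the Tannakian module you fall back on is exactly such a $Z$, so your two approaches are one argument.

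Conversely, your ``concrete construction'' paragraph does not establish that $Z$ exists. One equivariant map $V^*\to F$ gives only one row, and the whole content is that $m$ such rows form an invertible matrix. That is precisely the descent statement above, or the Tannakian theorem (van der Put--Singer, Thm.~2.33). Once this is cited, the rest of that paragraph is correct: $A=Z'Z^{-1}\in M_m(k)$, and $k(Z)=F$ by faithfulness.

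The step that genuinely fails is your treatment of a base field of constants. The cyclic vector lemma does need a non-constant element of $k$. But your claim that otherwise the extension is trivial is false. For example, $\mathbb{C}(x)/\mathbb{C}$ with $x'=1$ is the PV extension of $y''=0$, with group $\mathbb{G}_a$, and $\mathbb{C}(e^{x})/\mathbb{C}$ is the PV extension of $y'=y$. Cyclic vectors can also really be absent there. For $G=\mathbb{G}_m$ with the faithful representation $t\mapsto \mathrm{diag}(t,t)$, the associated module over $k=\mathbb{C}$ has $\partial$ acting by a scalar, so no vector is cyclic.

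So either add the hypothesis $k\neq\mathcal{C}(k)$, or treat this case directly. Over $k=C$, every PV extension has the form $C(x^{\epsilon},u_1,\dots,u_r)$, where $\epsilon\in\{0,1\}$, $x'=1$, and $u_j'=\mu_j u_j$ for $\mathbb{Z}$-independent $\mu_1,\dots,\mu_r$. Its group is $\mathbb{G}_a^{\epsilon}\times\mathbb{G}_m^{r}$. Any faithful representation of this group has dimension at least $r+\epsilon$, and at least $2$ if $r=0<\epsilon$. The constant-coefficient operator $(\partial-\mu_1)^{1+\epsilon}\prod_{j\geq 2}(\partial-\mu_j)$ (or $\partial^2$ when $r=0<\epsilon$) has exactly that order and has PV field $F$.
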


This proposition can be used to verify that the definition of $d$-solvability in Definition \ref{Singer1} might have alternatively have been taken to be $K_{i+1}/K_i$ is a PV extension and $\mathrm{Aut}(K_{i+1}/K_i)) \subset GL_d.$ There is no direct analog of Proposition \ref{linearGLd} in the nonlinear case. Part of the difficulty associated with the nonlinear generalizations of $d$-solvability is the fact that there is less control over the group of automorphisms of the equations. For example, there is a priori no bound on their dimension in terms of the order of the equation.

\begin{defn} \label{groupdsolve1}
    When $G$ is a linear algebraic group over $C$, we say that $G$ is {\em $d$-solvable} if there is a chain of algebraic subgroups $G = G_0 \supset G_1 \supset \ldots G_s = \id_G$ with the property that $G_i$ is normal in $G_{i-1}$ and $G_{i-1}/G_i$ is either finite or a quotient of a subgroup of $GL_d$. 
\end{defn}

Given a short exact sequence $1 \rightarrow G_1 \rightarrow G_2 \rightarrow G_3 \rightarrow 1$, $G_2$ is $d$-solvable if and only if $G_1$ and $G_3$ are each $d$-solvable.

\begin{thm} \cite[page 423]{nguyen2009d} \label{dsolvechar}
        A PV-extension is $d$-solvable if and only if its differential Galois group is $d$-solvable. 
\end{thm}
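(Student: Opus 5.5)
We prove the two directions separately; both rest on the Galois correspondence for PV-extensions and on Proposition \ref{linearGLd}. Write $G = \mathrm{Aut}(F/k)$ for the differential Galois group of the PV-extension $F/k$.

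For the direction ``$G$ $d$-solvable $\Rightarrow$ $F/k$ $d$-solvable'', take a chain $G = G_0 \supset G_1 \supset \cdots \supset G_s = \id_G$ as in Definition \ref{groupdsolve1}. Let $K_i = F^{G_i}$ be the fixed fields. Since $G_i$ is normal in $G_{i-1}$, the extension $K_i/K_{i-1}$ is a PV-extension with Galois group $G_{i-1}/G_i$. We then treat the two kinds of factors.
\begin{itemize}
\item If $G_{i-1}/G_i$ is finite, then $K_i/K_{i-1}$ is a finite algebraic extension.
\item If $G_{i-1}/G_i$ is a quotient $H/N$ with $H \subseteq GL_d$, we need to replace it by an honest linear subgroup of $GL_d$. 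By Chevalley's theorem, $H/N$ admits a faithful representation, but its dimension may exceed $d$. The planned fix is to realize $K_i$ inside a PV-extension with group $H$: construct a PV-extension $E \supset K_i$ with $\mathrm{Aut}(E/K_{i-1}) \cong H$ and $E^N = K_i$. This is an inverse (embedding) problem, and I expect it to be the main obstacle.
\end{itemize}
If the inverse problem cannot be solved directly, the fallback is to refine the tower further. Quotients of subgroups of $GL_d$ have reductive and unipotent parts. Unipotent factors can be split into $\m G_a$ steps, and tori into $\m G_m$ steps, each of which is $1$-solvable. Simple factors of $H/N$ come from simple factors of $H$, which act nontrivially in some representation of dimension at most $d$. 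Each such step is then a PV-extension whose group embeds in $GL_d$, and Proposition \ref{linearGLd} supplies the required linear equation of order at most $d$.

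For the converse, ``$F/k$ $d$-solvable $\Rightarrow$ $G$ $d$-solvable'', take a tower $k = K_0 \subset \cdots \subset K_r \supseteq F$ as in Definition \ref{Singer1}.
\begin{enumerate}
\item Replace each $K_i$ by the Galois closure of the tower, so that each step is normal over $k$. Normality is obtained by adjoining all conjugate equations, which only introduces products of groups in $GL_d$ and finite groups.
\item Use the ``Given a short exact sequence'' property stated after Definition \ref{groupdsolve1}. The group of the top extension $L = K_r$ over $k$ is then $d$-solvable by induction along the tower: each step contributes a normal subgroup whose quotient is either finite or embedded in $GL_d$.
\item Since $F \subseteq L$ is a PV-subextension, $G$ is a quotient of $\mathrm{Aut}(L/k)$. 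Quotients of $d$-solvable groups are $d$-solvable by the same short exact sequence property.
\end{enumerate}

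The delicate point in this direction is ensuring that the composite extension is PV with no new constants. This holds because the constant field is $\mc C(k)$, which is algebraically closed.
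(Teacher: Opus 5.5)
The paper gives no proof of its own here (it quotes Nguyen), so your argument must stand alone. In the direction ``$G$ $d$-solvable $\Rightarrow$ $F/k$ $d$-solvable'' it does not, because your fallback fails exactly where the theorem has content. A simple factor of $H/N$ is only an \emph{isogenous quotient} of a simple factor of $H$, and it need not embed in $GL_d$. (Acting nontrivially in a representation is also not acting faithfully.) For $d=2$, $H=SL_2$, $N=\{\pm 1\}$, the step has group $PSL_2$, which has no faithful $2$-dimensional representation. A PV-extension with this group is therefore the PV-extension of no equation of order $\le 2$, and Proposition \ref{linearGLd} gives nothing.

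So the lifting problem you flagged has to be solved. It becomes easy once you use the finite algebraic steps permitted by Definition \ref{Singer1} and ask only for containment. Reduce to $Q=H/N$ connected: split off $Q/Q^0$ as a finite step, noting $Q^0=\pi(H^0)$. Over a finite extension $K'$ of $K_{i-1}$ the $Q$-torsor underlying $K_i$ is trivial, so $K_iK'=K'(\bar y)$ with $\bar y\in Q$ and $\bar y'\bar y^{-1}=A\in\mathrm{Lie}(Q)\otimes K'$ (Kolchin; compare Fact \ref{fact: kolchin}). Lift $A$ to $\tilde A\in\mathrm{Lie}(H)\otimes K'$, which is possible since $d\pi$ is onto in characteristic $0$. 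Take a solution $y\in H$ of $y'y^{-1}=\tilde A$ in a PV-extension of $K_iK'$. Since $\pi(y)$ and $\bar y$ have the same logarithmic derivative, $\pi(y)=\bar y c$ for a constant point $c$ of $Q$. Correcting $y$ by a constant lift of $c$ gives $\pi(y)=\bar y$, hence $K_i\subseteq K'\langle y\rangle$. This is the PV-extension of a $d\times d$ system with group inside $H\subseteq GL_d$, hence, by Proposition \ref{linearGLd}, of an equation of order $\le d$. Incidentally, a $\mathbb{G}_a$-step is $2$-solvable but not $1$-solvable under Definition \ref{Singer1}; this is harmless, since unipotent factors only arise for $d\ge 2$.

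The converse also has a gap. A tower of PV-extensions usually has no ``Galois closure'' that is a PV-extension of $k$. So there is no algebraic group $\mathrm{Aut}(L/k)$ carrying a series that mirrors the tower and surjects onto $G$. In $\mathbb{C}(x)\subset\mathbb{C}(x,e^x)\subset\mathbb{C}(x,e^x,e^{e^x})$, the conjugates of $y'=e^xy$ are the infinitely many equations $y'=ce^xy$. Moreover, no PV-extension of $\mathbb{C}(x)$ contains $e^{e^x}$: its automorphisms $e^x\mapsto ce^x$ would force it to contain every $e^{ce^x}$, giving infinite transcendence degree. Algebraic closedness of the constants, which you invoke, does not address this.

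The standard repair is induction on $r$, pushing $F$ up the tower. If $K_1/k$ is finite, first replace it by its normal closure and base-change the tower. Then $FK_1$ is PV over $k$, so $F\cap K_1$ is stable under $\mathrm{Aut}(FK_1/k)$ and hence PV over $k$. Consequently:
\begin{itemize}
\item $H_1=\mathrm{Aut}(F/F\cap K_1)\cong\mathrm{Aut}(FK_1/K_1)$ is normal in $G$;
\item $G/H_1\cong\mathrm{Aut}(F\cap K_1/k)$ is a quotient of $\mathrm{Aut}(K_1/k)$, which is finite or a subgroup of $GL_d$;
\item $H_1$ is $d$-solvable by induction, because $FK_1/K_1$ is a PV-extension sitting inside a $d$-solvable tower of length $r-1$.
\end{itemize}
The short exact sequence property then finishes. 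The same device of carrying $F$ along, together with the fact that closed subgroups of $d$-solvable groups are $d$-solvable, is a clean way to chain the one-step argument above along the series $(G_i)$.
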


In \cite{singer1985solving}, $2$-solvable extensions are called \emph{Eulerian}, where the previous theorem was established. We note in particular the following consequence of Theorem \ref{theo: pfaff-binding-crit}:

\begin{cor}
    The generic solution of a linear differential equation is pfaffian if and only if it is $2$-solvable.
\end{cor}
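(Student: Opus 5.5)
The plan is to reduce the corollary to a comparison of two group-theoretic conditions on the differential Galois group, using Theorem \ref{theo: pfaff-binding-crit} on one side and Theorem \ref{dsolvechar} on the other. Fix a homogeneous linear differential equation over $K$ and let $q$ be the type of its generic solution. The solution set is a finite-dimensional $\mathbb{C}$-vector space, so $q$ is $\mathbb{C}$-internal. Its binding group is the differential Galois group $G$ of the PV-extension, up to a finite-index issue which we remove by passing to $K^{\mathrm{alg}}$, as in the proof of Theorem \ref{theo: pfaff-binding-crit}.

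First we relate types to extensions. A realization of $q$ lies in a $2$-solvable extension if and only if the PV-extension it generates, together with its conjugates, i.e.\ a fundamental system of solutions, is $2$-solvable. By Lemma \ref{lem: morl-prod-pfaff}, $q$ is Pfaffian if and only if the type of a fundamental system is. So both sides concern the PV-extension with group $G$.

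By Theorem \ref{theo: pfaff-binding-crit}, $q$ is Pfaffian if and only if $G$ has a subnormal series whose quotients are finite or isomorphic to $\mathbb{G}_a$, $\mathbb{G}_m$ or $\mathrm{PSL}_2$. By Theorem \ref{dsolvechar}, $q$ is $2$-solvable if and only if $G$ has a subnormal series whose quotients are finite or quotients of algebraic subgroups of $\mathrm{GL}_2$. So it suffices to show these two classes of groups coincide.

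\emph{Pfaffian implies $2$-solvable.} Each of $\mathbb{G}_a$, $\mathbb{G}_m$ and $\mathrm{PSL}_2=\mathrm{SL}_2/\{\pm1\}$ is a quotient of a subgroup of $\mathrm{GL}_2$, so a Pfaffian series is already a $2$-solvable series.

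\emph{$2$-solvable implies Pfaffian.} Each factor $H/N$ with $H\le \mathrm{GL}_2$ must be refined into factors of the allowed types, and this refinement is the main step. Since the admissible class is closed under extensions, it is enough to treat $H$ itself and then pass to quotients. Take $H^\circ\le \mathrm{GL}_2$ connected. Either $H^\circ$ is solvable, and then it has a composition series with factors $\mathbb{G}_a$ and $\mathbb{G}_m$ by Lie--Kolchin, or it contains $\mathrm{SL}_2$. In the latter case $H^\circ$ is $\mathrm{SL}_2$ or $\mathrm{GL}_2$, and the series
\[
\mathrm{GL}_2 \supset \mathrm{SL}_2 \supset \{\pm 1\} \supset 1
\]
has factors $\mathbb{G}_m$, $\mathrm{PSL}_2$ and finite. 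Any quotient of $H$ then inherits a series of the same type, because images of subnormal series are subnormal and each of $\mathbb{G}_a$, $\mathbb{G}_m$, $\mathrm{PSL}_2$ has only finite or isomorphic quotients. Alternatively, this direction follows from Fact \ref{fact: subgroups of products} together with Singer's Lemma 2.2 in \cite{singer1985solving}, so the point most needing care is this bookkeeping with quotients and finite components.
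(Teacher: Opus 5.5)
Your proposal is correct and follows the paper's route. The paper also derives the corollary by combining Theorem \ref{theo: pfaff-binding-crit} with the Galois-theoretic characterization of $2$-solvability (Singer's Eulerian criterion, equivalently Theorem \ref{dsolvechar} for $d=2$). The only difference is that you check by hand that $2$-solvable groups in the sense of Definition \ref{groupdsolve1} are exactly the groups with a subnormal series whose quotients are finite, $\mathbb{G}_a$, $\mathbb{G}_m$ or $\mathrm{PSL}_2$, whereas the paper takes this from \cite{singer1985solving}; your Lie--Kolchin/$\mathrm{SL}_2$ dichotomy and your bookkeeping of quotients for this are sound.
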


We also recall the definition of $n$-reducibility developed over a series of papers of Nishioka \cite{nishioka1990painleve, nishioka1992painleve}:
\begin{defn}
    An tuple $a \in \mathcal{U}$ is {\em $n$-reducible} over some differential field $K$ if there is a chain of differential field extensions $K = K_0 < K_1 < \cdots < K_l $ such that $\mathrm{trdeg}(K_i/K_{i-1}) \leq n$ for all $1 \leq i \leq l$ and $a \in K_l$. 
\end{defn}

We will say that a type $p \in S(K)$ is $n$-reducible if some (any) of its realization is $n$-reducible over $K$. It is clear that the class of $n$-reducible types contains the class of $n$-solvable types. It is only slightly less obvious that a $n$-solvable type is $(n-1)$-reducible when $n \geq 2$; this is Corollary \ref{cor: d-sol-implies-d-1-red}. 

\subsection{\texorpdfstring{A condition for non-$d$-reducibility}{A condition for non-d-reducibility}} 
Consider the family of types $\mathfrak{P}_{K,d}$ consisting of types over some differential field extension $K<F$ such that some (any) realization has transcendence degree less or equal to $d$ over $F$. This is a $K$-invariant family, and we have:

\begin{prop}\label{pro: red-to-analysis}
    If a type $p \in S(K)$ is $d$-reducible, then it is $\mathfrak{P}_{K,d}$-analyzable. 
\end{prop}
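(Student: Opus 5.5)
The plan is to turn the chain of differential fields witnessing $d$-reducibility directly into an analysis in the sense of the definition of a $\mathcal{Q}$-analysis, with $\mathcal{Q}=\mathfrak{P}_{K,d}$. Fix $a \models p$ and a chain $K = K_0 < K_1 < \cdots < K_l$ with $\mathrm{trdeg}(K_i/K_{i-1}) \leq d$ and $a \in K_l$. First I would arrange that each $K_i$ is finitely generated as a differential field over $K$, so that it equals $K\langle b_i \rangle$ for a finite tuple $b_i$. I would replace $K_i$ by $K\langle b_1,\dots,b_i\rangle$, where $b_i$ is a transcendence basis of $K_i$ over $K_{i-1}$ (at most $d$ elements), with $a$ added at the last step. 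Transcendence degree bounds only decrease under this replacement. The field $K\langle b_1,\dots,b_i\rangle$ has transcendence degree at most $d$ over $K\langle b_1,\dots,b_{i-1}\rangle$, because each derivative of $b_i$ is algebraic over $K_{i-1}(b_i)$. Adding $a$, which lies in $K_l^{\mathrm{alg}}$ over the new chain, costs nothing.

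Next, set $c_i = (b_1,\dots,b_i)$, $q_i=\tp(c_i/K)$, and let $f_i : q_i \to q_{i-1}$ be the coordinate projections. These are $K$-definable maps. For $c_i \models q_i$, the type $\tp(c_i/f_i(c_i)K)=\tp(b_i/K\langle c_{i-1}\rangle)$ has a realization of transcendence degree at most $d$ over $F=K\langle c_{i-1}\rangle$, so it lies in $\mathfrak{P}_{K,d}$. The same holds for $q_1$ over $F=K$. This gives a $\mathfrak{P}_{K,d}$-analysis of $\tp(c_l/K)$, where $c_l$ includes $a$.

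Finally, I pass from the analysis of $\tp(c_l/K)$ to one of $p$. The element $a$ is one coordinate of $c_l$, so $p$ is the image of $\tp(c_l/K)$ under a projection. I expect this to be the main subtlety, because analyzability is usually phrased as $a$ lying in the definable closure of the analysis rather than $p$ itself sitting on top. There are two ways I would try to handle it. The first is to reorder the chain: put $a$ into each field via $K_i' = K_i \cap K\langle a\rangle$, with generators $e_i$ so that $K\langle e_i\rangle = K_i'$. Since $a \in K_l$, the fields $K_i'$ form a chain of subfields of $K\langle a\rangle$ ending at $K\langle a \rangle$. Then I would check that $\mathrm{trdeg}(K_i'/K_{i-1}')\le d$. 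This can fail for intersections in general, so I would instead use the relative version. Namely, $\tp(e_i/K\langle e_{i-1}\rangle)$ is a type of $\mathfrak{P}_{K,d}$-analyzable elements, because $e_i\in\dcl(c_i K)$, and the family is $K$-invariant and closed under such images. The second way is simply to note that $\mathfrak{P}_{K,d}$-analyzability in the standard sense, with each step in the definable closure of realizations of $\mathcal{Q}$-types, is exactly what the tuple $(c_1,\dots,c_l)$ witnesses, since $a\in\dcl(c_lK)$. I would present this second formulation and remark that it yields the analysis in the sense of the paper's definition when each step is closed under taking definable images.
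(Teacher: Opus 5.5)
You correctly produce a $\mathfrak{P}_{K,d}$-analysis of $\tp(c_l/K)$ for a tuple $c_l$ containing $a$, so your first two paragraphs are fine. For the transcendence bound, the real reason is that $K_{i-1}\subseteq K(b_1,\dots,b_{i-1})^{\mathrm{alg}}$, which makes the derivatives of $b_i$ algebraic over $K(b_1,\dots,b_i)$.

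The gap is the last step, which you flag but do not close. In the paper's definition, an analysis of $p$ begins at $p$ itself and proceeds by $K$-definable maps. So every stage lies in $\dcl(aK)=K\langle a\rangle$, and you need a chain of differential subfields of $K\langle a\rangle$, from $K$ to $K\langle a\rangle$, with steps of transcendence degree at most $d$. This is exactly what Theorem \ref{d3gentrans} uses: there the first step $f\colon p\to f(p)$ must have $f(p)$ a $\mathbb{C}$-internal image of $p$.

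Neither of your routes supplies this. Your first route asserts that $\tp(e_i/K\langle e_{i-1}\rangle)$ is analyzable because $e_i\in\dcl(c_iK)$, i.e.\ that analyzability passes to definable images; that is the claim itself. Your second route replaces the conclusion by the weaker statement that $a\in\dcl(cK)$ for some $c$ whose type has an analysis. Closure of the family $\mathfrak{P}_{K,d}$ under definable images concerns single steps. It does not give closure of the class of $\mathfrak{P}_{K,d}$-analyzable types.

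The paper attempts this descent with canonical bases: it takes $e=\mathrm{Cb}(\stp(b/aK))$ and then the code $\tilde e\in\dcl(aK)$ of its conjugates. But it ends by asserting that $\tp(\tilde e/K)$ is analyzable because $\tilde e\in\dcl(e_1,\dots,e_k)$, which is the same closure property.

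In fact the step cannot be supplied, because the statement fails in the paper's sense. Take $K=\mathbb{Q}(t)^{\mathrm{alg}}$ with $t'=1$. Let $x,y$ be independent generic solutions of the Riccati equation $u'=t-u^2$. This equation is attached to the Airy equation, so its binding group is $\mathrm{PSL}_2(\mathbb{C})$ acting on $\mathbb{P}^1(\mathbb{C})$, generically $2$-transitively. Put $s=x+y$.

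Then $s$ is $1$-reducible via $K<K\langle x\rangle<K\langle x,y\rangle$. Also $xy=(s'+s^2-2t)/2\in K\langle s\rangle$, so $\mathrm{trdeg}(K\langle s\rangle/K)=2$. The stabiliser of $s$ in $\mathrm{PSL}_2(\mathbb{C})$ is the stabiliser of the unordered pair $\{x,y\}$, namely the normaliser of a maximal torus, which is a maximal closed subgroup. By the Galois correspondence, every element of $K\langle s\rangle$ either lies in $K$ or is interdefinable with $s$ over $K$. Hence $\tp(s/K)$ has no $\mathfrak{P}_{K,1}$-analysis.

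In this example your intersection route gives $K\langle x\rangle\cap K\langle s\rangle=K$, a jump of transcendence degree $2$. In the paper's argument one gets $e=x$, and $\tilde e$ is interdefinable with $s$.

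Allowing stages in $\acl(aK)$ does not rescue the statement either. Take a $\mathbb{C}$-internal type with binding group $\mathrm{PSL}_3(\mathbb{C})$ acting on $\mathbb{P}^2(\mathbb{C})$, and the code of the conic through five independent realizations of it. This code is $2$-reducible and has transcendence degree $5$. Its stabiliser is $\mathrm{PSO}_3$, which is maximal, so $\acl(aK)$ contains nothing of intermediate transcendence degree.

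So your first two paragraphs prove the correct weaker statement. The proposition, together with its use in Theorem \ref{d3gentrans}, needs to be reformulated.
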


\begin{proof}
    We use induction on the number $l$ of differential field extensions $K = K_1 < \cdots < K_l$ such that $a \in K_l$ for some $a \models p$. If $l = 1$, then $p$ is the type of some element of $K$, so satisfies an equation of order $0$. 
    
    Now assume that there is a chain $K = K_1 < \cdots < K_l$ such that some $a \models p$ belongs to $K_l$, and that we have proven the result up to $l-1$. Consider $e = \mathrm{Cb}(\stp(b/aK))$, where $b$ is some element of $K_{l-1}$ such that $K\langle a \rangle$ has transcendence degree less or equal to $d$ over $K \langle b \rangle$. Then we have that $b$ is independent from $aK$ over $e$, and in particular $K\langle a \rangle$ must be of transcendence degree at most $d$ over $K\langle e \rangle$. Moreover, we know that $e \in \dcl(b_1, \cdots b _m)$, for some Morley sequence $(b_i)$ in $\tp(b/aK)$. For all $i$, the type $\tp(b_i/K)$ is $d$-reducible via a chain of length $l-1$, and thus $\mathfrak{P}_{K,d}$-analyzable by induction hypothesis. This implies that $\tp(e/K)$ is as well. 

    We know that $e \in \acl(aK)$, let $\{ e_1, \cdots , e_k \}$ be the set of realizations of $\tp(e/aK)$, and $\tilde{e}$ its canonical parameter. Then $\tilde{e} \in \dcl(e_1, \cdots , e_k)$, so $\tp(\tilde{e}/K)$ is $\mathfrak{P}_{K,d}$-analyzable. We also have that $e \in \acl(\tilde{e})$, so in particular $K\langle a \rangle$ is of transcendence degree less or equal to $d$ over $K\langle \tilde{e} \rangle$. Finally, we get $\tilde{e} \in \dcl(aK)$, which implies that $\tp(a/K)$ is $\mathfrak{P}_{K,d}$-analyzable.
\end{proof}

Using our methods, we can completely characterize $1$-reducibility for $\mathbb{C}$-internal types:

\begin{thm}\label{theo: C-int-1-red-char}
    A $\mathbb{C}$-internal type $p \in S(K)$ is $1$-reducible if and only if its binding group has a subnormal series in which each quotient is finite or definably isomorphic to $\m G_a (\mathbb{C}), \, \m G_m (\mathbb{C})$, $\mathrm{PSL}_2(\mathbb{C})$ or an elliptic curve.
\end{thm}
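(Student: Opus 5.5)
We adapt the proof of Theorem \ref{theo: pfaff-binding-crit}, replacing Pfaffian chains of length $l$ by $1$-reducible towers of length $l$, so that each step is an order one type instead of a type of the form $y'=P(y)$. As there, by Lemma \ref{lem: morl-prod-pfaff} (whose proof works verbatim for $1$-reducibility, since a tuple of $1$-reducible elements is $1$-reducible by concatenating towers), and by \cite[Lemma 2.1]{jaoui2022abelian}, we may assume that $p$ is weakly $\mathbb{C}$-orthogonal, fundamental, and that $K$ is algebraically closed.

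For the right-to-left direction, we use Fact \ref{fact: galois} along the subnormal series. This reduces us to fundamental, weakly $\mathbb{C}$-orthogonal types whose binding group is finite, $\mathbb{G}_a$, $\mathbb{G}_m$, $\mathrm{PSL}_2$ or an elliptic curve $E$. The first four cases are Pfaffian by Lemma \ref{lem: bin-grp-implies-pfaff}, hence $1$-reducible. For $E$, Fact \ref{fact: kolchin} gives a logarithmic-differential equation on $E$, i.e.\ a $D$-variety structure on the curve $E$, so its generic type has order one and is $1$-reducible. One then needs the analogue of Lemma \ref{lem: pfaff-pres-fib} for $1$-reducibility: if $f(p)$ and $\tp(a/f(a)K)$ are $1$-reducible then so is $p$. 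This holds by stacking towers, noting that a tower over $K\langle f(a)\rangle$ built with parameters can be lifted, since we are allowed to adjoin the parameters as further order-$\le 1$ steps after an independence argument. The converse of that lemma is not needed.

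For left-to-right, we induct on the length $l$ of the tower. The hypothesis is that if $b\models q$ lies in $\dcl(\overline a_1,\dots,\overline a_m,K,\mathbb{C})$ with the $\overline a_i$ conjugate realizations of a $1$-reducible tower of length $l$, then the binding group has the required series. We may normalize each step of the tower to be a generic type of an order one $D$-variety on a curve, or an algebraic step. In the base case, $r=\tp(a_i/K)$ is either orthogonal to $\mathbb{C}$, in which case $q$ is algebraic and $G$ is finite, or almost $\mathbb{C}$-internal. In the latter case we pass to $\mathbb{C}$-reductions exactly as in the proof of Theorem \ref{theo: pfaff-binding-crit}. The difference is that the $\mathbb{C}$-reduction of an order one type now lives on an arbitrary curve, so its binding group is a connected one-dimensional algebraic group acting on a curve, or $\mathrm{PSL}_2$ acting on $\mathbb{P}^1$. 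The possibilities are therefore $\mathbb{G}_a$, $\mathbb{G}_m$, $\mathbb{G}_a\rtimes\mathbb{G}_m$, $\mathrm{PSL}_2$ or an elliptic curve. Higher genus is excluded because the automorphism group is then finite, which forces an algebraic type.

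We then need a version of Fact \ref{fact: subgroups of products} allowing elliptic curve factors. The same Goursat induction works, provided we know that an algebraic subgroup of an elliptic curve, and any quotient of one, is finite or an elliptic curve. This holds since isogenous images of elliptic curves are elliptic curves. We must also handle products mixing different factor types, but Goursat's lemma only produces kernels and common quotients, which again lie in the class. The inductive step is copied verbatim from Theorem \ref{theo: pfaff-binding-crit}: take the normal subgroup $\bg[\mathbb{C},\pi(p)]{K}{q}$, apply Fact \ref{fact: galois} and stable embeddedness to push the quotient to the induction hypothesis, and analyze the kernel via the $\mathbb{C}$-reductions of the fibers $\tp(\overline a_i/\pi(\overline a_i)K)$, which are order one, together with the descending chain condition.

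The main obstacle is the classification of binding groups of almost $\mathbb{C}$-internal order one types over a non-constant base. One has to justify, presumably via the genus argument of \cite{jin2020internality}, that only $\mathbb{G}_a$, $\mathbb{G}_m$, $\mathbb{G}_a\rtimes\mathbb{G}_m$, $\mathrm{PSL}_2$ and elliptic curves occur, and that these are all realized by some $D$-curve structure.
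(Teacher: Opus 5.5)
Your proposal is correct and takes the same approach as the paper. The paper's proof just says to rerun the proof of Theorem \ref{theo: pfaff-binding-crit} with order one equations $P(y',y)=0$ in place of $y'=P(y)$, so the genus argument no longer excludes elliptic curves as binding groups of the order one $\mathbb{C}$-reductions. You spell out details the paper leaves implicit: the $1$-reducible analogues of Lemmas \ref{lem: morl-prod-pfaff} and \ref{lem: pfaff-pres-fib}, the elliptic case via Fact \ref{fact: kolchin}, and the extension of Fact \ref{fact: subgroups of products} to elliptic factors. These go through as you describe, and stacking towers needs no independence argument, since adjoining a fixed field to each step of a tower can only lower transcendence degrees.
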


\begin{proof}
    The proof is essentially the same as for Theorem \ref{theo: pfaff-binding-crit}, the only difference being that we now use equations of the form $P(y',y) = 0$ for some polynomial $P$. Therefore, we cannot rule out elliptic curves anymore.
\end{proof}

Note that \cite[Theorem 4.6]{singer1985solving} of Singer proves that the generic solution of a linear differential equation is 2-solvable (which he calls Eulerian) if and only if its Galois group has a subnormal series in which each quotient is finite or definably isomorphic to $\m G_a (\mathbb{C}), \, \m G_m (\mathbb{C})$, $\mathrm{PSL}_2(\mathbb{C})$, and calls such groups eulerian. Since the Galois group of a linear differential equation is linear, we have obtained:

\begin{cor}
    Let $p$ be the generic type of a linear differential equation $L(y)= 0$, the following are equivalent:
    \begin{itemize}
        \item $p$ is $1$-reducible,
        \item $p$ is $2$-solvable,
        \item $p$ is Pfaffian.
    \end{itemize}
\end{cor}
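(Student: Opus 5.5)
The corollary follows by chaining three characterizations, all phrased in terms of the Galois group $G$ of $L(y)=0$, and checking that they coincide for linear groups. Throughout, $p$ is the generic type of $L(y)=0$. It is $\mathbb{C}$-internal, since its realizations lie in the PV-extension, which is generated over $K$ by a fundamental system of solutions. Its binding group is the differential Galois group $G$, which is the $\mathbb{C}$-points of a linear algebraic group, a closed subgroup of $GL_n$.

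First, Pfaffian $\Leftrightarrow$ the group condition. By Theorem \ref{theo: pfaff-binding-crit}, $p$ is Pfaffian if and only if $G$ has a subnormal series whose quotients are finite or definably isomorphic to $\m G_a(\mathbb{C})$, $\m G_m(\mathbb{C})$ or $\mathrm{PSL}_2(\mathbb{C})$. In Singer's terminology \cite{singer1985solving}, this says $G$ is eulerian.

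Second, $1$-reducible $\Leftrightarrow$ the same group condition. By Theorem \ref{theo: C-int-1-red-char}, $p$ is $1$-reducible if and only if $G$ has such a series in which elliptic curves are also allowed as quotients. So I must show that an elliptic curve cannot occur as a quotient when $G$ is linear. A subnormal series of $G$ consists of closed algebraic subgroups, so each quotient $G_{i-1}/G_i$ is a quotient of a linear algebraic group by a closed normal subgroup, and such a quotient is again linear (affine). An elliptic curve is a complete, positive-dimensional, connected group variety and is not affine. Hence it is not definably isomorphic to any such quotient, and the two conditions agree for $G$.

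Third, $2$-solvable $\Leftrightarrow$ the group condition. This is Singer's theorem \cite[Theorem 4.6]{singer1985solving}, equivalently Theorem \ref{dsolvechar} with $d=2$. I would check that a subgroup of $GL_2$, up to finite quotients and subnormal refinement, decomposes into factors $\m G_a$, $\m G_m$ and $\mathrm{PSL}_2$. Conversely, each of these factors is a quotient of a subgroup of $GL_2$. So $2$-solvability of $G$ in the sense of Definition \ref{groupdsolve1} is exactly the eulerian condition.

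The only genuinely delicate point is the second step: making sure that "definably isomorphic" in Theorem \ref{theo: C-int-1-red-char} refers to algebraic groups over $\mathbb{C}$. In $\mathrm{DCF}_0$ the constants form a pure algebraically closed field, so definable groups in $\mathbb{C}$ are definably isomorphic to algebraic groups, and definable isomorphisms between them are algebraic up to the usual Weil–Hrushovski identification. With that in hand, the linear-versus-complete argument above excludes elliptic curves, and the three conditions coincide.
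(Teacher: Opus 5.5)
Your proposal is correct and follows the paper's own argument. The paper also proves the corollary by passing through the group condition: it combines Theorem \ref{theo: pfaff-binding-crit}, Theorem \ref{theo: C-int-1-red-char} and Singer's eulerian characterization \cite[Theorem 4.6]{singer1985solving}, and uses only "the Galois group is linear" to exclude elliptic-curve quotients. Your affineness argument, and your check that Nguyen-style $2$-solvability of groups matches the eulerian condition, just spell out steps the paper leaves implicit.
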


An obstacle to study $d$-reducibility for $d>1$ is that we cannot in general bound the dimension of the binding group $\bg{K}{p}$ of $p\in S(K)$ in terms of the transcendence degree of $a \models p$ over $K$. This problem only arises in the study of nonlinear equations. There is, however, a useful invariant which we can employ, \emph{degree of generic transitivity}: 

\begin{defn}
    Let $p$ be a $\mathbb{C}$-internal type. The action of $\bg{K}{p}$ on $p$ is {\em $d$-generically transitive} if $\bg{K}{p}$ acts transitively on $p^{(d)}$, or equivalently, if $p^{(d)}$ is weakly $\mathbb{C}$-orthogononal. 

    The {\em degree of generic transitivity} of the action is the largest $d$ such that it is $d$-generically transtitive.
\end{defn}

Remark that we only give the definition for binding groups here, as it is all we need, but it can in general be defined for any finite Morley rank group action \cite{freitag2021bounding}.

Taking the image of a type can only raise the degree of generic transitivity:

\begin{lem}\label{lem: gen-trans-to-quotient}
    Let $p \in S(K)$ be a $\mathbb{C}$-internal type and $f : p \rightarrow q$ be a $K$-definable function. If $\bg{K}{p}$ acts generically $n$ transitively on $p$, then $\bg{K}{q}$ acts generically $n$ transitively on $q$.
\end{lem}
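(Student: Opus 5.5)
The plan is to use the equivalent formulation given in the definition: $\bg{K}{p}$ acts generically $n$-transitively on $p$ if and only if $p^{(n)}$ is weakly $\mathbb{C}$-orthogonal. So it suffices to show that $q^{(n)}$ is weakly $\mathbb{C}$-orthogonal. (Note that $q$ is itself $\mathbb{C}$-internal, being the image of a $\mathbb{C}$-internal type under a $K$-definable map, so $\bg{K}{q}$ makes sense. It is stationary because $p$ is.)

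Take $(b_1,\dots,b_n) \models q^{(n)}$. First I want a realization $(a_1,\dots,a_n)\models p^{(n)}$ with $f(a_i)=b_i$ for all $i$. I would build it inductively by choosing $a_i$ realizing $\tp(a/f(a)K)$ transported to $b_i$, and taking it independent from $a_1,\dots,a_{i-1},b_1,\dots,b_n$ over $b_iK$.

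Forking calculus then gives the required independence. Since $b_i \ind_K b_{<i}b_{>i}$, and $a_i \ind_{b_iK} a_{<i}b_{\neq i}$, transitivity yields $a_i \ind_K a_{<i}$, using $b_{<i}\in\dcl(a_{<i})$. Stationarity of $p$ then shows that the resulting tuple realizes $p^{(n)}$.

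Now let $\bar c$ be any tuple of constants. By weak orthogonality of $p^{(n)}$ we have $(a_1,\dots,a_n)\ind_K \bar c$. Since $(b_1,\dots,b_n)\in\dcl(a_1,\dots,a_nK)$, monotonicity of independence gives $(b_1,\dots,b_n)\ind_K \bar c$. Hence $q^{(n)}$ is weakly $\mathbb{C}$-orthogonal.

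There is an alternative, group-theoretic route. Any $\sigma\in\bg{K}{\mathcal{U}}$ commutes with $f$, so transitivity of $\bg{K}{p}$ on $p^{(n)}(\mathcal{U})$ pushes forward to transitivity on the image $f^{(n)}(p^{(n)})$, provided that image is all of $q^{(n)}(\mathcal{U})$. That lifting is exactly what the construction above supplies.

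The only delicate step is this lifting of an independent $n$-tuple of realizations of $q$ to an independent $n$-tuple of realizations of $p$. It is handled by choosing nonforking extensions of the fibre types, and I expect it to be routine.
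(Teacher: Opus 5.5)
Your proposal is correct and is essentially the paper's argument. Both reduce to weak $\mathbb{C}$-orthogonality of $q^{(n)}$, lift an arbitrary $(b_1,\dots,b_n)\models q^{(n)}$ to some $(a_1,\dots,a_n)\models p^{(n)}$ with $f(a_i)=b_i$ via nonforking extensions of the fibre types, and conclude by monotonicity. The paper does the lifting by induction on $n$, re-choosing $\alpha_{<n}$ independent from $f(a_n)$, whereas you make each $a_i$ independent from all the $b$'s in one pass.

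One step to tighten: $a_i \ind_K a_{<i}$ also needs $b_i \ind_K a_{<i}$. That does not follow from $b_i \ind_K b_{\neq i}$ alone. It does follow by an easy induction on your earlier choices $a_j \ind_{b_jK} a_{<j}b_{\neq j}$, which give $a_{<i} \ind_K b_{\geq i}$.
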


\begin{proof}\sloppy
    Since generic $n$ transitivity of the binding group action is equivalent to weak orthogonality to $\mathbb{C}$, we just have to show that if $p^{(n)}$ is weakly orthogonal to the constant, then so is $q^{(n)}$. For this, it would be enough, by forking calculus, to show that for any $f(a_1), \cdots, f(a_n) \models q^{(n)}$, there are $(\alpha_1, \cdots, \alpha_n) \models p^{(n)}$ such that $f(\alpha_i) = f(a_i)$ for all $i$. 

    We prove it by induction, the base case $n=1$ is immediate. Assume we have proved it for $n-1$, and pick realizations $a_1, \cdots , a_n$ of $p$ such that $(f(a_1), \cdots , f(a_n)) \models q^{(n)}$. By induction, there are $\alpha_1, \cdots, \alpha_{n-1}$ such that $f(a_i) = f(\alpha_i)$ for all $i \leq n-1$ and $(\alpha_1, \cdots , \alpha_{n-1}) \models p^{(n-1)}$.
    
    We can pick some $\beta_1, \cdots , \beta_{n-1}$ having the same type as $\alpha_1, \cdots , \alpha_{n-1}$ over $f(a_1) \cdots f(a_n) K$ and with $\beta_1, \cdots , \beta_{n-1} \ind_{f(a_1) \cdots f(a_{n-1})K} f(a_n)$. Replacing the $\alpha_i$ by the $\beta_i$, we therefore assume that $\alpha_1 \cdots \alpha_{n-1} \ind_{f(a_1) \cdots f(a_{n-1})K} f(a_n)$, which implies $\alpha_1 \cdots \alpha_{n-1} \ind_K f(a_n)$ as $(f(a_1), \cdots , f(a_n)) \models q^{(n)}$. 

    Now pick $\alpha_n$ such that $\alpha_n \equiv_{f(a_n)K} a_n$ and $\alpha_n \ind_{f(a_n)K} \alpha_1, \cdots , \alpha_{n-1}$, this gives us the tuple we were looking for.
\end{proof}

We obtain the following necessary condition for $d$-reducibility:

\begin{thm} \label{d3gentrans}
    Let $p \in S(K)$ be non-algebraic $\mathbb{C}$-internal type. If $\bg{K}{p}$ acts generically $d+3$ transitively on $p$, then $p$ is not $d$-reducible.
\end{thm}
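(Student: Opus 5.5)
We argue by contradiction: assume $p$ is $d$-reducible, so by Proposition \ref{pro: red-to-analysis} it is $\mathfrak{P}_{K,d}$-analyzable. The aim is to extract from this analysis a nontrivial $\mathbb{C}$-internal type whose binding group acts generically $(d+3)$-transitively and yet is controlled by a realization of transcendence degree at most $d$. The Freitag--Moosa bound \cite{freitag2021bounding} then gives the contradiction: a definable group acting generically $k$-transitively on a type of rank $r$ has $k \le r+2$, with equality essentially only for $\mathrm{PGL}_{r+1}$ acting on $\mathbb{P}^r$.

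\textbf{Step 1 (reduction to a single step).} Take a realization $a \models p$ and a chain $K=K_0<K_1<\cdots<K_l$ with $a \in K_l$ and $\mathrm{trdeg}(K_i/K_{i-1})\le d$. Choose $i$ minimal such that $a \nind_K K_i$, and set $F=K_{i-1}$. Then $a \ind_K F$, so $\tp(a/F)$ is the non-forking extension of $p$. Since $p$ is stationary and $\mathbb{C}$-internal, the binding group does not change when passing to $F$, so generic $(d+3)$-transitivity is preserved. Now $a \nind_F K_i$, where $K_i = F\langle b\rangle$ with $\mathrm{trdeg}(K_i/F)\le d$.

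\textbf{Step 2 (building the rank-$\le d$ internal type).} Let $e = \mathrm{Cb}(\stp(b/aF))$, replaced by a canonical parameter as in the proof of Proposition \ref{pro: red-to-analysis}. Then $e \in \acl(aF)$, $e \notin \acl(F)$ by the forking, and we may arrange $e \in \dcl(aF)$. Let $q = \tp(e/F)$ and let $f : p \to q$ be the induced $F$-definable map. By Lemma \ref{lem: gen-trans-to-quotient}, $q$ is $\mathbb{C}$-internal and its binding group acts generically $(d+3)$-transitively. It is non-algebraic.

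Since $e \in \dcl(b_1,\dots,b_m F)$ for a Morley sequence in $\tp(b/aF)$, the Morley rank is not bounded by $d$ directly. This is the main obstacle. The plan is to use instead that $e$ is a canonical base of a type of a $d$-dimensional tuple. By internality, pass to the fiber: after adding $b$, the element $e$ becomes algebraic over $bF$ together with constants. From this we want to conclude that the generically transitive action on $q$ is witnessed on a set of dimension at most $d$.

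First attempt: use the orbit of $e$ under $\bg{F}{q}$ restricted to realizations forking with $b$. This exhibits a definable family, of dimension at most $\mathrm{trdeg}(b/F)\le d$, of proper subvarieties of $q$ that is invariant under the group. The idea is to get a generically $(d+3)$-transitive action on a definable set of rank $\le d$, namely the locus of $e$ over $b$, or dually the parameter space. Concretely, stabilizers of realizations of $\tp(b/F)$, suitably internalized, should give a homogeneous space of rank at most $d$ on which the group acts with generic transitivity $\ge d+3$. Also, $e\notin\acl(F)$ while $e\in\acl(bF)$, which forces $\tp(e/F)$ to have transcendence degree $\le d$ once $e$ is chosen as an element of $F\langle b\rangle^{\mathrm{alg}}$. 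I would try this first: replace $e$ by $\dcl(aF)\cap F\langle b\rangle^{\mathrm{alg}}$-type data, so that $U(q)\le d$.

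\textbf{Step 3 (conclusion).} With $q$ non-algebraic, $\mathbb{C}$-internal, of rank $r\le d$, and with binding group action generically $(d+3)$-transitive, the Freitag--Moosa theorem forces $d+3\le r+2\le d+2$, which is a contradiction.
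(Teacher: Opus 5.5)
There is a genuine gap, and it starts at Step~1. You claim that because $p$ is stationary and $\mathbb{C}$-internal, passing to the non-forking extension $\tp(a/F)$ leaves the binding group unchanged. This is false. The group $\bg{F}{\tp(a/F)}$ is only the subgroup of $\bg{K}{p}$ consisting of elements induced by automorphisms fixing $F$, and $\tp(a/F)$ need not even remain weakly $\mathbb{C}$-orthogonal. For example, let $p$ be the generic type of $y'=y$ over $\mathbb{Q}$, whose binding group $\mathbb{G}_m(\mathbb{C})$ acts transitively, and let $F=\mathbb{Q}\langle c\rangle$ with $c\models p$. For $a\models p$ with $a\ind_{\mathbb{Q}}F$ we have $a/c\in\mathbb{C}$. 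So every automorphism fixing $F\cup\mathbb{C}$ fixes $a$, and the binding group of $\tp(a/F)$ is trivial. More generally, if $F$ contains a realization of $p$, the surviving group lies in a point stabilizer and the degree of generic transitivity drops. Nothing about the intermediate field $K_{i-1}$ of a reducibility tower prevents this. Hence Lemma~\ref{lem: gen-trans-to-quotient} cannot be applied over $F$, and Step~3 has no $(d+3)$-transitive action to work with.

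Independently, Step~2 never produces the non-algebraic internal quotient of rank at most $d$ that Step~3 needs. As you note, $e=\mathrm{Cb}(\stp(b/aF))$ only lies in $\dcl(b_1,\dots,b_mF)$, and what follows are heuristics. Your concrete suggestion is to use $\dcl(aF)\cap F\langle b\rangle^{\mathrm{alg}}$, i.e.\ to assert $e\in\acl(bF)\setminus\acl(F)$. That requires $a\nind_F b$ to force $\acl(aF)\cap\acl(bF)\not\subseteq\acl(F)$, which fails in general. Take a generic point $a$ of $\mathbb{C}^2$ and a generic line $b$ through it, and choose $b'$ with $\tp(b'/\acl(a))=\tp(b/\acl(a))$ and $b'\ind_a b$. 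Then $b\ind b'$, so $\acl(a)\cap\acl(b)\subseteq\acl(b)\cap\acl(b')=\acl(\emptyset)$, even though $a\nind b$.

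The missing idea is that the bounded-transcendence-degree piece should be a \emph{fiber}, not a quotient, and that the base should never move. The proof of Proposition~\ref{pro: red-to-analysis} gives a $K$-definable $f$ with $\mathrm{trdeg}(a/Kf(a))\le d$ and with $\tp(f(a)/K)$ admitting a shorter $\mathfrak{P}_{K,d}$-analysis. The paper inducts on the length of this analysis, with everything over $K$. The image $f(p)$ is $\mathbb{C}$-internal and, by Lemma~\ref{lem: gen-trans-to-quotient}, generically $(d+3)$-transitive, so it is algebraic by induction. Stationarity of $p$ then gives $f(a)\in\dcl(K)$, so $\mathrm{trdeg}(a/K)\le d$. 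Applying the Freitag--Moosa Borovik--Cherlin bound to $p$ itself gives the contradiction.
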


\begin{proof}
    By Proposition \ref{pro: red-to-analysis}, we know that any $n$-reducible type over $K$ is $\mathfrak{P}_{K,d}$-analyzable. We prove by induction on the length $l$ of that analysis that $n$-reducibility implies that the binding group cannot act generically $d+3$ transitively. The base case is a consequence of the truth of the Borovik-Cherlin conjecture for $\mathrm{ACF}_0$ \cite{freitag2021bounding}.
    
    Now let $p$ be a $d$-reducible non-algebraic type and assume, for a contradiction, that $\bg{K}{p}$ acts $d+3$-transitively on $p$. Let $f : p \rightarrow f(p)$ be the first step in the analysis given by Proposition \ref{pro: red-to-analysis}. Note that $f(p)$ is $d$-reducible as well, and by induction hypothesis, if it is not algebraic, its binding group cannot act $d+3$-transitively. Therefore by Lemma \ref{lem: gen-trans-to-quotient}, the type $f(p)$ must be algebraic. But because $p$ is $\mathbb{C}$-internal, in must be stationary, and thus $f(p)$ is just the type of some element of $K$. Therefore $p \in \mathfrak{P}_{K,d}$, and we get a contradiction by Borovik-Cherlin again.
\end{proof}

We next point out some connections between $d$-solvability and $d-1$-reducibility when $d>1$. 

\begin{prop} \label{logder}
Let $X$ be the solution set of an order $n$ linear homogeneous differential equation $L(y) = 0$ over $K$. The image of $X \setminus \{ 0 \}$ under the logarithmic derivative map $\mathrm{dlog} : y \rightarrow \frac{y'}{y}$ is the zero set of an order $n-1$ differential equation.
\end{prop}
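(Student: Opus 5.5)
The plan is to run the classical Riccati-type substitution: write $L(y) = y^{(n)} + c_{n-1} y^{(n-1)} + \cdots + c_0 y$ with $c_i \in K$, and substitute $u = y'/y$. First, for $y \neq 0$ in $X$, one shows by induction on $k$ that
\[
\frac{y^{(k)}}{y} = R_k(u, u', \ldots, u^{(k-1)}),
\]
where the $R_k$ are differential polynomials with integer coefficients given by $R_0 = 1$ and $R_{k+1} = R_k' + u R_k$. This recursion comes from differentiating $y^{(k)} = y R_k$, which gives $y^{(k+1)} = y' R_k + y R_k' = y(uR_k + R_k')$. By induction, $R_k$ has order exactly $k-1$ in $u$ for $k \geq 1$ and is linear in $u^{(k-1)}$ with coefficient $1$, since $R_1 = u$ and the top term of $R_{k+1}$ comes from $R_k'$.

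Dividing $L(y)=0$ by $y$ then shows that $u = \mathrm{dlog}(y)$ satisfies
\[
R(u) := R_n(u) + \sum_{i<n} c_i R_i(u) = 0,
\]
which is an order $n-1$ equation over $K$: its leading term is $u^{(n-1)}$ with coefficient $1$, and the lower $R_i$ have order at most $n-2$. This gives $\mathrm{dlog}(X\setminus\{0\}) \subseteq Z(R)$.

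For the reverse inclusion, take $u$ with $R(u) = 0$. Working in the differentially closed field $\mathcal{U}$, choose $y \neq 0$ with $y' = uy$; this is possible because $\mathcal{U}$ is differentially closed and $y'=uy$ has nonzero solutions. By the same computation, $y^{(k)} = y R_k(u)$ for all $k$, so $L(y) = y R(u) = 0$. Hence $y \in X\setminus\{0\}$ and $\mathrm{dlog}(y) = u$, so the image is exactly the zero set of $R$.

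The only delicate point is interpreting ``zero set'': the solution set $X$ is taken in $\mathcal{U}$, so the existence of the exponential $y$ is automatic, and $R$ is a genuine order $n-1$ differential polynomial because it is monic in $u^{(n-1)}$. Beyond that, the argument is routine bookkeeping in the induction for $R_k$.
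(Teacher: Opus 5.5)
Your proof is correct, but it follows a different route from the paper's.

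\textbf{The paper's route.} The paper gives only a dimension count. For $y \in X\setminus\{0\}$, the fiber $\mathrm{dlog}^{-1}(\mathrm{dlog}(y))$ is the orbit $\{cy : c \in \mathbb{C}\setminus\{0\}\}$. This orbit lies in $X$ by linearity and has order one. Hence the image of a generic point of $X$ generates a differential field of transcendence degree $n-1$ over $K$.

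\textbf{Your route.} You write down the Riccati-type differential polynomial $R = R_n + \sum_{i<n} c_i R_i$ explicitly. The recursion $R_{k+1} = R_k' + uR_k$ shows that $R$ is monic of order $n-1$ in $u$. You then get the reverse inclusion by solving $y' = uy$, $y \neq 0$ in $\mathcal{U}$.

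\textbf{What your route gives.} It yields an explicit defining equation over $K$. It also gives the literal equality of $\mathrm{dlog}(X\setminus\{0\})$ with the zero set of a single equation. The paper's transcendence-degree count only controls the generic point of the image, and a definable image is not a priori Kolchin closed. You also get the upper bound $n-1$ for free, because $u$ satisfies an equation monic in $u^{(n-1)}$. The lower bound follows from $K\langle y\rangle = K\langle u\rangle(y)$.

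\textbf{What the paper's route gives.} It is shorter and coordinate-free. It isolates exactly what is used later in Corollary~\ref{cor: d-sol-implies-d-1-red} and Theorem~\ref{SingerNguyen}: $K\langle y\rangle$ has transcendence degree at most one over $K\langle \mathrm{dlog}(y)\rangle$, and the image has transcendence degree $n-1$. Your construction yields the same facts, since $y$ satisfies the first-order equation $y'=uy$ over $K\langle u\rangle$.
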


\begin{proof}
The fiber of a given point $\mathrm{dlog}^{-1} (\alpha)$ is in bijection with $\m A^1 ( \mathbb{C}) \setminus \{0 \} $: if $\mathrm{dlog}(y) = \alpha$, then $\mathrm{dlog}(cy) = \alpha$ for any $c \in \mathbb{C} \setminus \{ 0 \}$. Since $X$ is the zero-set of an homogeneous linear equation, this fiber is contained in $X$. As each fiber is of order one, it must be that the image of the generic point generates a differential field of transcendence degree $n-1$ over $k.$ 
\end{proof}

In particular, this gives us the following:

\begin{cor}\label{cor: d-sol-implies-d-1-red}
    Fix $d>1$ and let $p \in S(K)$. If some (any) realization $a$ of $p$ is $d$-solvable, then it is also $d-1$-reducible.
\end{cor}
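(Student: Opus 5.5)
It suffices to show that each step of a $d$-solvable tower can be realized by steps of transcendence degree at most $d-1$. Concretely, I would prove: if $K_{i-1} \subset K_i$ is a PV-extension for a homogeneous linear equation $L(y)=0$ of order $m \leq d$ over $K_{i-1}$, then $K_i$ lies in the top of a chain of differential field extensions of $K_{i-1}$ whose successive transcendence degrees are at most $d-1$. Algebraic steps have transcendence degree $0 \leq d-1$, so they cause no trouble. Concatenating these chains along the tower $K = K_0 \subset \cdots \subset K_r \ni a$ then gives $d-1$-reducibility of $a$ over $K$. This is legitimate because the definition of reducibility only asks for a chain of differential fields with bounded transcendence degree at each step, and it is transitive under concatenation.

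For a single PV step, let $y_1, \ldots, y_m$ be a fundamental system of solutions of $L(y)=0$ generating $K_i$ over $K_{i-1}$. I would adjoin the solutions one at a time, using Proposition \ref{logder} to split off the logarithmic derivative. Set $F_0 = K_{i-1}$. Given $F_{j-1}$, which contains $y_1,\ldots,y_{j-1}$, I would take $y_j$. We may assume $y_j \neq 0$; otherwise there is nothing to add. The element $u_j = y_j'/y_j$ satisfies an order $m-1$ equation over $K_{i-1}$, namely the image of $X\setminus\{0\}$ under $\mathrm{dlog}$ from Proposition \ref{logder}. Hence $\mathrm{trdeg}(F_{j-1}\langle u_j\rangle / F_{j-1}) \leq m-1 \leq d-1$. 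Next, $y_j$ satisfies $y_j' = u_j y_j$ over $F_{j-1}\langle u_j \rangle$, so $F_{j-1}\langle u_j\rangle\langle y_j\rangle$ has transcendence degree at most $1 \leq d-1$ over $F_{j-1}\langle u_j\rangle$. This is where $d>1$ is used. After $m$ rounds we reach a field containing $K_i$.

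For the transcendence degree claim, I would make explicit why $u_j$ satisfies an order $m-1$ equation. Over $K_{i-1}$, the substitution $y'=uy$ turns $L(y)=0$ into a Riccati-type equation $R(u)=0$ of order $m-1$, polynomial in $u, u', \ldots, u^{(m-1)}$. It is obtained by writing $y^{(k)} = P_k(u,\ldots,u^{(k-1)})\,y$ and dividing by $y$. So $u$ is differentially algebraic of order at most $m-1$ over $K_{i-1}$, and hence over any extension. This is all the proof of Proposition \ref{logder} really needs, and it avoids any genericity issue.

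The main obstacle is this bookkeeping: ensuring the transcendence degree bound holds over the enlarged field $F_{j-1}$ and for non-generic solutions, rather than only for the generic point as in Proposition \ref{logder}. The explicit Riccati equation handles this directly, since any solution of an order $m-1$ equation $R(u)=0$ with $R$ nonzero in $u^{(m-1)}$ has transcendence degree at most $m-1$ over any base containing the coefficients.
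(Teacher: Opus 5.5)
Your proof is correct and is essentially the argument the paper intends, since the paper gives no separate proof and derives this corollary directly from Proposition \ref{logder}: each Picard--Vessiot step of order $m\le d$ is split via the logarithmic derivative into an extension of transcendence degree at most $m-1\le d-1$, followed by one of transcendence degree at most $1\le d-1$. Your version also fills in what the paper leaves implicit: you adjoin the fundamental system one solution at a time, and you use the explicit Riccati-type equation (monic in $u^{(m-1)}$), so the bound holds for every nonzero solution over any enlarged base and not only for the generic point.
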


Below we give an application of this degree of generic transitivity technique to obtain a necessary condition for reducibility:

\begin{thm} \label{SingerNguyen}
Let $X$ be the solution set of a homogeneous linear differential equation over $K$ with differential galois group $GL_n (\mathbb{C})$. Then the generic solution of $X$ is $n-1$-reducible but not $n-2$-reducible. 
\end{thm}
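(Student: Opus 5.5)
The statement has two parts, and I would prove them separately.

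\emph{Positive part: the generic solution is $(n-1)$-reducible.} I would use Proposition \ref{logder}. Let $a$ be a generic solution of $X$ and put $b = \mathrm{dlog}(a) = a'/a$. By Proposition \ref{logder}, $b$ satisfies an order $n-1$ equation, so $\mathrm{trdeg}(K\langle b\rangle/K) \leq n-1$. Since $a' = ba$, we have $K\langle b\rangle\langle a\rangle = K\langle b\rangle(a)$, which has transcendence degree at most $1$ over $K\langle b\rangle$. The chain $K < K\langle b\rangle < K\langle b, a\rangle$ then has steps of transcendence degree at most $n-1$ and $1$. As $n-1 \geq 1$ (the case $n=1$ is trivial), this witnesses $(n-1)$-reducibility. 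Equivalently, I could cite Corollary \ref{cor: d-sol-implies-d-1-red} applied to the $n$-solvable extension given by the PV-extension itself.

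\emph{Negative part: the generic solution is not $(n-2)$-reducible.} I would apply Theorem \ref{d3gentrans} with $d = n-2$. This requires showing that the binding group acts generically $(n+1)$-transitively on the generic type $p$ of $X$.

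First, $p$ is $\mathbb{C}$-internal, since every solution lies in the $\mathbb{C}$-span of a fundamental system. By Kolchin's theory, the binding group $\bg{K}{p}$ is $GL_n(\mathbb{C})$ acting on $X \cong \mathbb{C}^n$ by its standard linear action, after fixing a fundamental system. So I need that $GL_n(\mathbb{C})$ acts generically $(n+1)$-transitively on $\mathbb{C}^n$. That is, its orbit on $(n+1)$-tuples should be open dense, or in model-theoretic terms, $p^{(n+1)}$ should be weakly $\mathbb{C}$-orthogonal.

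Classically, this holds. An $(n+1)$-tuple of vectors in general position consists of a basis $v_1,\dots,v_n$ together with $v_{n+1} = \sum c_i v_i$ with all $c_i \neq 0$. Diagonal rescaling sends any such configuration to $(e_1,\dots,e_n,\sum e_i)$, so the tuples in general position form a single orbit. The orbit of $p^{(n+1)}$ should correspond to exactly this set, since a Morley sequence of generic solutions lands in general position.

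\emph{Main obstacle.} The step I expect to be delicate is translating this algebraic transitivity into weak orthogonality of $p^{(n+1)}$ to $\mathbb{C}$. I would argue as follows. Take a Morley sequence $a_1,\dots,a_{n+1}$ in $p$. The first $n$ terms form a fundamental system, and $a_{n+1} = \sum c_i a_i$ with $c_i \in \mathbb{C}$. I would show that the type of $(c_1,\dots,c_n)$ over $K$ is the generic type of $(\mathbb{C}^*)^n$; any such tuple is then carried to any other by an element of $\bg{K}{p}$, namely a diagonal matrix in the basis $a_1,\dots,a_n$. Using the full binding group $GL_n$, which acts transitively on bases because $p^{(n)}$ is weakly orthogonal, two realizations of $p^{(n+1)}$ are then conjugate by the binding group. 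This gives the required weak orthogonality.

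Theorem \ref{d3gentrans} then yields that $p$ is not $(n-2)$-reducible. I would treat small $n$ separately. For $n=1$, the claim "not $(-1)$-reducible" is vacuous or trivial. For $n=2$, it says the generic solution is not $0$-reducible, i.e. not algebraic over $K$, which holds because $p$ is non-algebraic.
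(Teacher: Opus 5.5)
\textbf{Gap in the negative part.} Your positive part is fine and matches the paper's use of Proposition \ref{logder}. The negative part, however, rests on a false claim: $GL_n(\mathbb{C})$ does \emph{not} act generically $(n+1)$-transitively on $\mathbb{C}^n$.

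A dimension count already rules it out, since $\dim GL_n = n^2 < n(n+1)$. Concretely, if $v_{n+1} = \sum c_i v_i$, then $g v_{n+1} = \sum c_i\, g v_i$ for every $g \in GL_n$. So the coefficients $c_1,\ldots,c_n$ are invariants of the tuple. Your diagonal rescaling cannot send $(v_1,\ldots,v_n,\sum c_i v_i)$ to $(e_1,\ldots,e_n,\sum e_i)$ unless all $c_i=1$: to hit $\sum e_i$ you must send $v_i\mapsto e_i/c_i$, and then the first $n$ entries are no longer $e_i$.

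The same failure shows up in your model-theoretic step. For a Morley sequence $a_1,\ldots,a_{n+1}$ in $p$, the constants $c_i$ lie in $\dcl(Ka_1\cdots a_{n+1})$ by Cramer's rule with the Wronskian. They are not all in $\acl(K)$, since otherwise $a_{n+1}\in\acl(Ka_1\cdots a_n)$. Hence $p^{(n+1)}$ is \emph{not} weakly $\mathbb{C}$-orthogonal. Binding group elements fix $\mathbb{C}$ pointwise, so they can never change the $c_i$. For example, a diagonal matrix in the basis $a_1,\ldots,a_n$ produces $(\lambda_1a_1,\ldots,\lambda_na_n,\sum c_i\lambda_i a_i)$, whose last entry has the same coefficients $c_i$ relative to the new basis. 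So $GL_n$ on $\mathbb{C}^n$ is generically $n$-transitive and no more. Applying Theorem \ref{d3gentrans} directly to $p$ therefore only yields non-$(n-3)$-reducibility; your separate handling of $n\le 2$ is fine, but for $n\ge 3$ the argument breaks.

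\textbf{The missing idea.} The frame argument you wrote is correct \emph{projectively}, where each $v_i$ may be rescaled independently. So pass to the image $q=\mathrm{dlog}(p)$, as the paper does. The induced map $\bg{K}{p}\to\bg{K}{q}$ has kernel the scalar matrices, because $\mathrm{dlog}(\sigma y)=\mathrm{dlog}(y)$ forces $\sigma y = c_y y$, and linearity makes $c_y$ constant. Hence $\bg{K}{q}\cong PGL_n(\mathbb{C})$, acting naturally on $\mathrm{dlog}(X)\cong\mathbb{P}^{n-1}(\mathbb{C})$. This action is generically $(n+1)$-transitive, so Theorem \ref{d3gentrans} with $d=n-2$ shows $q$ is not $(n-2)$-reducible. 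Since $\mathrm{dlog}(a)\in K\langle a\rangle$, $p$ is not $(n-2)$-reducible either. This is consistent with Lemma \ref{lem: gen-trans-to-quotient}: the degree of generic transitivity rises from $n$ to $n+1$ in the quotient.
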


\begin{proof}
    That it is $n-1$-reducible follows from Proposition \ref{logder}.
    
    To prove that it is not $n-2$-reducible, first note that we may assume that the linear differential equation has order $n$ (see \cite[Lemma 1]{nguyen2009d} for example). Therefore, the action of $\mathrm{Gl}_n(\mathbb{C})$ on $X$ is definably isomorphic to its natural action on $\mathbb{C}^n$. 

    Consider the logarithmic derivative map $\mathrm{dlog}$ on $X$. By Proposition \ref{logder}, the image $\mathrm{dlog}(X)$ is the solution set of a differential equation of order $n-1$, and is definably isomorphic to $\mathbb{P}^{n-1}(\mathbb{C})$. 

    We can restrict $\mathrm{dlog}$ to a definable map $\mathrm{dlog} : p \rightarrow q$ from the generic type of $X$ to the generic type of $\mathrm{dlog}(X)$, which induces a $K$-definable map $\widetilde{\mathrm{dlog}} : \bg{K}{p} \rightarrow \bg{K}{q}$. The kernel of this map consists of elements of $\sigma \in \bg{K}{p}$ such that for all $y \models p$, there is $c_y \in \mathbb{C}$ such that $\sigma(y) = c_y y$. Linearity implies that there is a fixed $c$ such that $c_y = c$ for all $y$, and we thus obtain that $\ker(\widetilde{\mathrm{dlog}})$ is the subgroup of scalar matrices. Thus $\bg{K}{q}$ is definably isomorphic to $\mathrm{PGl}_n(\mathbb{C})$, and its action on $q$ is the natural one. This action is generically $n+1$-transitive, thus by Theorem \ref{d3gentrans}, we obtain that $q$, and thus $p$, is not $d-2$-reducible. 
\end{proof}

Theorem \ref{SingerNguyen} strengthens Theorem 1 of \cite{nguyen2009d} in the case where the Galois group is $\mathrm{Gl}_n$. Indeed, this theorem states that the generic type of the equation is not $n-1$-solvable, which is, a priori, strictly weaker than the type not being $n-2$-reducible. 

\begin{ques}
Can one use Theorem \ref{SingerNguyen} combined with the type of techniques used in \cite{devilbiss2023generic} to give a technique for establishing the non-reducibility of nonlinear equations?
\end{ques}

Singer\cite{singer1985solving} and Nguyen \cite{nguyen2009d} give a necessary and sufficient condition for the generic solution of a linear differential equation to be $d$-solvable. Our above work in this section only strengthens some of their results in the necessary characterization to the class of $(d-1)$-irreducible functions. The main issue is that Theorem \ref{d3gentrans} contains only a necessary restriction on the automorphism group. The sufficient condition is easy in the linear case, but is missing in the general nonlinear case. 

\begin{ques}
Can one strengthen Theorem \ref{SingerNguyen} to give a necessary and sufficient condition for $(d-1)$-reducibility in terms of the binding group? 
\end{ques}

The majority of the above results of this section and the referenced prior results concern the generic solution of a linear equation. One might naturally ask similar questions for \emph{all} solutions. In \cite[Proposition 5.1]{singer1985solving}, Singer gives a condition for a linear differential equation to have no $2$-solvable solutions. Singer then shows that this condition can be used to give a decision procedure (\cite[Section 6]{singer1985solving}) for whether an order $3$ linear equation has \emph{any} $2$-solvable solutions. 

In this section, we've given just the beginnings of generalizing results from the class of $d$-solvable functions to the class of $(d-1)$-reducible functions. One might hope for an analogous connection between the classes of Pfaffian and $1$-reducible functions, at least on domains avoiding the singularities of the differential equations in the chain.

\begin{ques}
Can one replicate the effective bounds on components and the model theoretic results of the Pfaffian class in some more general $1$-reducible setting? 
\end{ques}

\bibliographystyle{plain}
\bibliography{research}

\end{document}